\theoremstyle{plain}
\newtheorem{theorem}{Theorem}[section]
\newtheorem{corollary}[theorem]{Corollary}
\newtheorem{lemma}[theorem]{Lemma}
\newtheorem{proposition}[theorem]{Proposition}
\newtheorem{definition}[theorem]{Definition}
\newtheorem{condition}[theorem]{Condition}
\newtheorem{assumption}[theorem]{Assumption}
\theoremstyle{remark}
\newtheorem{remark}[theorem]{Remark}
\newtheorem{example}[theorem]{Example}
\numberwithin{equation}{section}
\DeclareMathOperator*{\argmin}{arg\,min}
\begin{document}
	\title[Tsallis and R\'{e}nyi Deformations Linked via a New $\lambda$-Duality]{Tsallis and R\'{e}nyi Deformations\\Linked via a New $\lambda$-Duality}

\begin{abstract}
Tsallis and R\'{e}nyi entropies, which are monotone transformations of each other, are deformations of the celebrated Shannon entropy. Maximization of these deformed entropies, under suitable constraints, leads to the $q$-exponential family which has applications in non-extensive statistical physics, information theory and statistics. In previous information-geometric studies, the $q$-exponential family was analyzed using classical convex duality and Bregman divergence.  In this paper, we show that a generalized $\lambda$-duality, where $\lambda = 1 - q$ is the constant information-geometric curvature, leads to a generalized exponential family which is essentially equivalent to the $q$-exponential family and has deep connections with R\'{e}nyi entropy and optimal transport. Using this generalized convex duality and its associated logarithmic divergence, we show that our $\lambda$-exponential family satisfies properties that parallel and generalize those of the exponential family. Under our framework, the R\'{e}nyi entropy and divergence arise naturally, and we give a new proof of the Tsallis/R\'{e}nyi entropy maximizing property of the $q$-exponential family. We also introduce a $\lambda$-mixture family which may be regarded as the dual of the $\lambda$-exponential family, and connect it with other mixture-type families. Finally, we discuss a duality between the $\lambda$-exponential family and the $\lambda$-logarithmic divergence, and study its statistical consequences.
\end{abstract}
	
	\thanks{}
	\keywords{Tsallis entropy, R\'{e}nyi entropy, exponential family, $q$-exponential family, mixture family, logarithmic divergence, convex duality, information geometry, optimal transport.}

	
	\author{Ting-Kam Leonard Wong}
	\address{Department of Statistical Sciences, University of Toronto}
	\email{tkl.wong@utoronto.ca}
	
	\author{Jun Zhang}
	\address{Department of Psychology and Department of Statistics, University of Michigan}
	\email{junz@umich.edu}
	
	
	\maketitle

	\section{Introduction}
	\subsection{Background} \label{sec:background}
	%
	%
	%
	%
	Exponential families of probability distributions play important roles in probability, statistics, information theory, statistical physics, among other fields \cite{a16,WJ08}. An {\it exponential family} is a parameterized probability density (with respect to a given reference measure $\nu$) of the form
	\begin{equation} \label{eqn:exponential.family}
		p(x; \theta) = e^{\theta \cdot F(x) - \phi(\theta)},
	\end{equation}
	where $F = (F_1, \ldots, F_d)$ is a vector of statistics, $\theta \cdot F = \sum_{i = 1}^d \theta_i F_i$ is the dot product, and $\phi(\theta)$ is the cumulant generating function. In statistical physics, the divisive normalization $Z(\theta) = e^{\phi(\theta)}$ is called the partition function \cite{N11}. Densities of the form \eqref{eqn:exponential.family} maximize the Shannon entropy subject to constraints on the expected value of $F$, thus generalizing the Boltzmann-Gibbs distribution.
	
	Another very useful family of probability distributions is the {\it mixture family}. Let $p_0(x), p_1(x), \ldots, p_d(x)$ be given densities. The mixture family consists of the convex combinations
	\begin{equation} \label{eqn:mixture.family}
		p(x; \eta) = \sum_{i = 0}^d \eta_i p_i(x),
	\end{equation}
	where the mixture parameters satisfy $\eta_i \geq 0$ and $\sum_{i = 0}^d \eta_i = 1$. As reviewed in Section \ref{sec:Legendre.duality}, the exponential and mixture families are closely related as both families are dually flat in the information-geometric sense and can be analyzed via convex duality, Shannon entropy and the Kullback-Leibler (KL) divergence.

	To account for non-exponential -- especially power-law -- behaviours in statistical physics, Tsallis \cite{TC88} introduced a generalized entropy, now called the {\it Tsallis entropy}, which for a given parameter $q \in \mathbb{R} \setminus \{1\}$ (the entropic index) and a given reference measure $\nu$ is defined by
	\begin{equation} \label{eqn:Tsallis.entropy}
		{\bf H}_q^{\mathrm{Tsallis}} (p) = \frac{1}{q - 1} \left( 1 - \int p^q(x) \mathrm{d} \nu(x)  \right),
	\end{equation}
	where $p$ is a probability density with respect to $\nu$. The Tsallis entropy is closely related to the {\it R\'{e}nyi entropy} \cite{R61}, which for $q \in (0, \infty) \setminus \{1\}$ is given by
	\begin{equation} \label{eqn:Renyi.entropy}
		{\bf H}^{\mathrm{R\acute{e}nyi}}_q(p) = \frac{1}{1 - q} \log \left( \int p^q(x) \mathrm{d}  \nu(x)  \right).
	\end{equation}
	When $q > 0$, the Tsallis and R\'{e}nyi entropies are monotonic functions of each other:
	\begin{equation} \label{eqn:Tsallis.Renyi}
		{\bf H}_q^{\mathrm{Tsallis}}(p) = \frac{1}{1 - q} \left(e^{(1 - q) {\bf H}^{\mathrm{R\acute{e}nyi}}_q(p)} - 1 \right).
	\end{equation}
	So maximizing the Tsallis entropy is equivalent to maximizing the R\'{e}nyi entropy. Note that letting $q \rightarrow 1$ in \eqref{eqn:Tsallis.entropy} and \eqref{eqn:Renyi.entropy} recovers the classical {\it Shannon entropy}
	\begin{equation} \label{eqn:Shannon.entropy}
	{\bf H}(p) = -\int p(x) \log p(x) d\nu(x).
\end{equation}
Thus, both the Tsallis and R\'{e}nyi entropies are {\it deformations} of the Shannon entropy. Some arguments that favour the R\'{e}nyi entropy as a physical concept are given in \cite[Section 9.3]{N11}; for example, the R\'{e}nyi entropy satisfies the additive property under independence but the Tsallis entropy does not.
	
	Maximization of Tsallis entropy, subject to constraints on the {\it escort expectation} (see Section \ref{sec:classic.deformation}), leads to a generalized exponential family called the {\it $q$-exponential family}. The idea is to consider a {\it deformation} of the exponential function \cite{N02}. For $q \in \mathbb{R} \setminus \{1\}$, define the {\it $q$-exponential function} $\exp_q : \mathbb{R} \rightarrow [0, \infty]$ by
	\begin{equation} \label{eqn:q.exp}
		\exp_q(t) = \left[1 + (1 - q) t \right]_+^{1/(1 -q )},
	\end{equation}
	where $t_+ = \max\{t, 0\}$ and by convention $0^{t} = \infty$ for $t < 0$. We have $\frac{d}{dt} \exp_q(t) = [ \exp_q(t) ]^q$ when $1 + (1 - q)t > 0$; so $\exp_q$ is convex if and only if $q > 0$.  For this reason, it is natural to restrict $q$ to be strictly positive as in e.g.~\cite{AO11, N11} and later in this paper. The $q$-exponential family, which is an instance of the more general $\phi$-exponential family to be mentioned below, is then defined as the following parameterized density with {\it subtractive} normalization:
	\begin{equation} \label{eqn:q.exponential.family}
		p(x; \theta) = \exp_q \left( \theta \cdot F(x) - \phi_q(\theta) \right).
	\end{equation}
	The normalization $\int p_{\theta} d\nu = 1$ determines the function $\phi_q(\theta)$ which we call the {\it (subtractive) $q$-potential function}. Note that when $q \rightarrow 1$ we recover the exponential family \eqref{eqn:exponential.family}. Contrary to the classical exponential family, the support of the density \eqref{eqn:q.exponential.family} may depend on the parameter $\theta$; a specific example, namely the {\it $q$-Gaussian distribution}, is discussed in Example \ref{ex:q.Guassian}. In the literature the $q$-exponential function is also used to define parameterized densities under {\it divisive} normalization, i.e.,
	\begin{equation} \label{eqn:q.exp.divisive}
    p(x; \vartheta) = \frac{1}{Z_q(\vartheta)} \exp_q(\vartheta \cdot F(x)),
	\end{equation}
	where $\vartheta$ is an alternative parameter; see \cite[Section 4]{N09} for examples. In Propositions \ref{prop:lambda.as.reparamterize} and \ref{prop:q.to.lambda}, we give precise relationships between the parameters $\theta$ and $\vartheta$. Nevertheless, systematic information-geometric studies of the $q$-exponential families as well as other deformed exponential families typically apply convex duality to the {\it subtractive} potential function which can be shown to be convex \cite{AO11, AOM12}. By the {\it Tsallis deformation} we mean the classical framework under which \eqref{eqn:q.exponential.family} is analyzed using  standard convex duality; see Section \ref{sec:classic.deformation}.
		
	Distributions of the form \eqref{eqn:q.exponential.family} have found numerous applications in statistical physics \cite{T09}; a specific example is the momentum distribution of cold atoms in dissipative optical lattices \cite{DBR06}. Recently, the $q$-exponential family has also been applied to statistics and machine learning; see for example \cite{D13,LKLCO19,MTFAFBN21,BMBWGSN20,MNWDR21,NCMQW17}.  In the literature one can find more general formulations of deformed exponential families such as the $\phi$-exponential family \cite{NJ04}, the conjugate $(\rho, \tau)$-embedding \cite{ZJ04,ZJ13,ZJ15} by the second author, and the $U$-model \cite{ES06}. Note that all of these families are studied under subtractive normalization as in \eqref{eqn:q.exponential.family}. In this paper, we focus on the $q$-exponential family which is the simplest deformation to the exponential function yet with many applications. 
	
	On the other hand, motivated by optimal transport \cite{V03} and the duality between Bregman divergence and exponential family \cite{BMDG05}, the first author considered in \cite{W18} a deformed exponential family called the {\it $\mathcal{F}^{(\pm \alpha)}$-families}, where $\alpha > 0$ and 
	\begin{equation} \label{eqn:F.alpha}
		p(x; \vartheta) = \left\{\begin{array}{ll}
			(1 + \alpha \vartheta \cdot F(x))^{\frac{-1}{\alpha}} e^{\varphi(\vartheta)} & \text{($\mathcal{F}^{(\alpha)}$-family)} \\
			(1 + \alpha \vartheta \cdot F(x))^{\frac{1}{\alpha}} e^{-\varphi(\vartheta)} & \text{($\mathcal{F}^{(-\alpha)}$-family)}
		\end{array}\right.
	\end{equation}
    The similarity between this and \eqref{eqn:q.exp.divisive} will be explained below; see in particular \eqref{eqn:lambda.exp.intro}. While the use of divisive normalization (as in \eqref{eqn:q.exp.divisive}) is not new, the novelty of this approach is to analyze the divisive potential $\varphi(\vartheta)$ using a {\it generalized convex duality} motivated by optimal transport. For example, in the $\mathcal{F}^{(\alpha)}$ case it can be shown under suitable conditions that $e^{\alpha \varphi}$ is concave; following \cite{W18}, we say that this $\varphi$ is {\it $\alpha$-exponentially concave}. Under this framework, which adopts a divisive normalization and a generalized duality, the R\'{e}nyi entropy and divergence arise naturally. Hence, we call this approach the {\it R\'{e}nyi deformation}. In this paper, we present a novel framework, consisting of the $\lambda$-duality and the $\lambda$-exponential family, that unifies the $\mathcal{F}^{(\pm \alpha)}$-families and links it with the Tsallis deformation.

	\subsection{Outline}
	The main idea of this paper is a {\it $\lambda$-duality} which is a deformation of the usual convex duality reviewed in Section \ref{sec:Legendre.duality}. The $\lambda$ parameter is related to the classical $q$ parameter by $\lambda = 1 - q$ and this is always assumed in the paper. In a nutshell, instead of convex functions, for $\lambda \neq 0$ fixed we work with functions $f$, defined on a given open convex set of $\mathbb{R}^d$, such that $\frac{1}{\lambda} (e^{\lambda f} - 1)$ is convex. These functions are related to exponentially convex and exponentially concave functions in the literature \cite{PW18, W18} (see Remark \ref{rmk:exp.concavity.convexity}). Also, instead of the convex conjugate we use the {\it $\lambda$-conjugate} given by
	\begin{equation} \label{eqn:c.lambda.convexity.intro}
		f^{c_{\lambda}}(v) = \sup_u \left\{ -c_{\lambda}(u, v) - f(u) \right\} = \sup_u \left\{ \frac{1}{\lambda} \log (1 + \lambda u \cdot v) - f(u) \right\},
	\end{equation}
	where $c_{\lambda}(u, v) = -\frac{1}{\lambda} \log (1 + \lambda u \cdot v)$ is the cost function in the sense of optimal transport. It deforms the usual convex duality and, as we will see below, is naturally compatible with R\'{e}nyi entropy and divergence. The $\lambda$-duality also leads to the following generalization of the Bregman divergence:
	\begin{equation*}
		{\bf L}_{\lambda, f}[u : u'] 
		= f(u) - f(u') - \frac{1}{\lambda} \log (1 + \lambda \nabla f(u') \cdot (u - u')).
	\end{equation*}
	We call this the {\it $\lambda$-logarithmic divergence}. The details of this duality, which was motivated by optimal transport and previous works of Pal and the first author \cite{PW16, PW18, PW18b, W18, W19, WY19b}, are given in Section \ref{sec:generalized.duality}.

	Using the $\lambda$-duality we study the $q$-exponential family from a new perspective. In Section \ref{sec:lambda.exp.family} we introduce the {\it $\lambda$-exponential family} (Definition \ref{def:lambda.exp.family}) which is essentially the $q$-exponential family under divisive normalization (see \eqref{eqn:q.exp.divisive}):
	\begin{equation} \label{eqn:lambda.exp.intro}
		p(x; \vartheta) = e^{-c_{\lambda}(\vartheta, F(x)) - \varphi_{\lambda}(\vartheta)}\\
		= \exp_{q}(\vartheta \cdot F(x)) e^{-\varphi_{\lambda}(\vartheta)},
	\end{equation}
	where $q = 1 - \lambda$ and $\vartheta$ is another natural parameter related to $\theta$ via $\theta = \vartheta e^{-\lambda \varphi_{\lambda}(\vartheta)}$. The precise relationships between \eqref{eqn:lambda.exp.intro} and \eqref{eqn:q.exponential.family} are given in Propositions \ref{prop:lambda.as.reparamterize} and \ref{prop:q.to.lambda}. This $\lambda$-exponential family unifies the $\mathcal{F}^{(\pm \alpha)}$-families \eqref{eqn:F.alpha}. For an exponential family ($\lambda \rightarrow 0$), the subtractive and divisive normalizations are the same because the exponential function satisfies the functional equation $\exp(s + t) = \exp(s) \exp(t)$. We show that the divisive representation \eqref{eqn:lambda.exp.intro} of the $q$-exponential family is naturally compatible with the $\lambda$-duality, in the same way that convex duality describes the pairing of the log-partition function with the negative Shannon entropy for the exponential family. In particular, when $q = 1 - \lambda > 0$ and under suitable regularity conditions, the {\it divisive $\lambda$-potential} $\varphi_{\lambda}$ defined by \eqref{eqn:lambda.exp.intro} is $c_{\lambda}$-convex and its $\lambda$-logarithmic divergence is the R\'{e}nyi divergence of order $q$. Using the $\lambda$-duality, we give a new proof of the R\'{e}nyi entropy maxmizing property under constraints on the escort expectation.
	
	Note that in Section \ref{sec:background} we have not mentioned the ``$q$-analogue" of the mixture family. While generalized mixture families have been considered in the literature (see \cite[Section 4.2]{a16}), they are not usually studied together with deformed exponential families under a unified framework. In Section \ref{sec:mixtures} we study mixture-type families under the $\lambda$-duality. We first show that a $\lambda$-exponential family is closed under the $\alpha$-mixture of Amari \cite{A07} (also see Example \ref{eg:alpha.family}). Then we introduce a new {\it $\lambda$-mixture family} (Definition \ref{def:lambda.mixture}) which is in some sense dual to the $\lambda$-exponential family. 
	
	In Section \ref{sec:information.geometry} we describe the information geometry of  $\lambda$-exponential and $\lambda$-mixture families induced by the associated $\lambda$-logarithmic divergences or equivalently the R\'{e}nyi divergences. Inheriting the results from \cite{PW16,W18}, this geometry is dually projectively flat with constant sectional curvature $\lambda$, and the divergence satisfies a generalized Pythagorean theorem. Section \ref{sec:dual.divergence} explores further the relationship between $\lambda$-exponential family and $\lambda$-logarithmic divergence, and discusses some statistical implications. Finally, in Section \ref{sec:conclusion} we conclude and point out several directions for further research.
	
	\section{$\lambda$-duality as a deformation of convex duality} \label{sec:generalized.duality}
	\subsection{Review of convex duality} \label{sec:Legendre.duality}
	The key mathematical concept which underlies the exponential family, as well as previous treatments of the $q$-exponential family (and other deformed exponential families under subtractive normalization, as in e.g.~\cite{AOM12}), is the Legendre duality of convex functions. To wit, the cumulant generating function $\phi(\theta)$ in \eqref{eqn:exponential.family}, and the $q$-potential function $\phi_q(\theta)$ in \eqref{eqn:q.exponential.family} for $q > 0$, are convex functions of the parameter $\theta$ \cite[Theorem 2]{AO11}. 
	
	For readability we do not spell out all technical conditions in this brief review, and refer the reader to \cite{B14,R70} for a comprehensive treatment of convex analysis on Euclidean space and its application to exponential family. Given a function $f$ on $\mathbb{R}^d$, its convex conjugate is defined by
	\begin{equation} \label{eqn:convex.conjugate}
		f^*(v) = \sup_u \left( u \cdot v - f(u) \right), \quad v \in \mathbb{R}^d.
	\end{equation}
	Then $f$ is convex and lower-semicontinuous if and only if $f^{**} = f$. When $f$ is strictly convex and differentiable, the Legendre transformation
	\begin{equation} \label{eqn:convex.gradient}
		v = \nabla f(u),
	\end{equation}
	which can be motivated by the first order condition in \eqref{eqn:convex.conjugate}, defines a ``dual coordinate'' $v$, and its inverse is given by $v = \nabla f^*(u)$. Brenier's theorem in optimal transport theory \cite{V03} states that the Legendre transformation is an optimal transport map under the quadratic cost $c(u, v) = \frac{1}{2}|u - v|^2$. The convex function $f$ also induces a {\it Bregman divergence}, which is widely applied in statistics and machine learning, by
	\begin{equation} \label{eqn:Bregman.divergence}
		{\bf B}_{f}[u : u'] = f(u) - f(u') - \nabla f(u') \cdot (u - u') \geq 0.
	\end{equation}
	
	Now consider the cumulant generating function $\phi(\theta)$ of an exponential family \eqref{eqn:exponential.family}, where $\theta$, the primal variable, is the natural parameter. Then $\phi$ is convex and the dual variable $\eta = \nabla \phi(\theta)$, under the Legendre duality, is the {\it expectation parameter} given by
	\begin{equation} \label{eqn:expectation.parameter}
		\eta = \mathbb{E}_{\theta} [ F(X) ]  = \int F(x) p(x;\theta) \mathrm{d}\nu(x),
	\end{equation}
	where under $\mathbb{E}_{\theta}$ the random variable $X$ is distributed according to the density $p(\cdot ; \theta)$. As a function of $\eta$, the Legendre conjugate $\psi = \phi^*$ is the negative Shannon entropy, namely $\psi(\eta) = -{\bf H}(p_{\theta}) = - \int p_\theta \log p_\theta  \mathrm{d} \nu$, where for notational simplicity we write $p_{\theta} = p(\cdot; \theta)$. Furthermore, the Bregman divergences of $\phi$ and $\psi$ can be expressed as {\it Kullback-Leilber (KL) divergences}: 
	\begin{equation} \label{eqn:exp.family.Bregman}
		{\bf B}_{\phi}[\theta : \theta'] = {\bf B}_{\psi}[\eta' : \eta] =  {\bf H}( p_{\theta'}  || p_{\theta} ),
	\end{equation}
where
\begin{equation} \label{eqn:KL.divergence}
{\bf H}( p  || p' ) = \int  p \log \frac{ p }{ p' } \mathrm{d}\nu.
\end{equation}
Consequently, the local second order approximation of ${\bf B}_{\phi}$, which defines a Riemannian metric on the exponential family regarded as a statistical manifold, is given by the {\it Fisher information metric}. Explicitly, we have
	\begin{equation} \label{eqn:Breman.div.quadratic.approx}
		{\bf B}_{\phi}[ \theta + \Delta \theta : \theta] = \frac{1}{2} (\Delta \theta)^{\top} g(\theta) (\Delta \theta) + O(|\Delta \theta|^3),
	\end{equation}
	where
	\begin{equation} \label{eqn:Fisher.metric}
		g_{ij}(\theta) = \int \frac{\partial \log p_{\theta}}{\partial \theta_i} \frac{\partial \log p_{\theta}}{\partial \theta_j} p_{\theta} \mathrm{d}\nu
	\end{equation}
	defines the Fisher information metric. Note that the same metric is obtained if we expand ${\bf B}_{\phi}[\theta : \theta + \Delta \theta]$ instead. The natural parameter $\theta$ and the expectation parameter $\eta$ can be regarded as two sets of affine coordinate systems that are ``dual'' with respect to the metric $g$. The corresponding {\it dually flat geometry} is well-studied in information geometry \cite{a16}.
	
Similarly, for a mixture family \eqref{eqn:mixture.family}, it can be shown that the negative Shannon entropy $\psi(\eta) = -{\bf H}(p_{\eta})$, where $p_{\eta} = p(\cdot ; \eta)$, is a convex function of the mixture parameter, and its Bregman divergence is again a KL-divergence:
	\begin{equation} \label{eqn:mixture.family.Bregman}
		{\bf B}_{\psi}[\eta : \eta'] =  {\bf H}( p_{\eta} || p_{\eta'}).
	\end{equation}
	The induced Riemannian metric is again the Fisher metric. Thus, convex duality and Bregman divergence underlie both the exponential and mixture families. 
	
	\subsection{Classical deformation theory} \label{sec:classic.deformation}
	\begin{figure}[!t]
		\centering
		\includegraphics[width=3in]{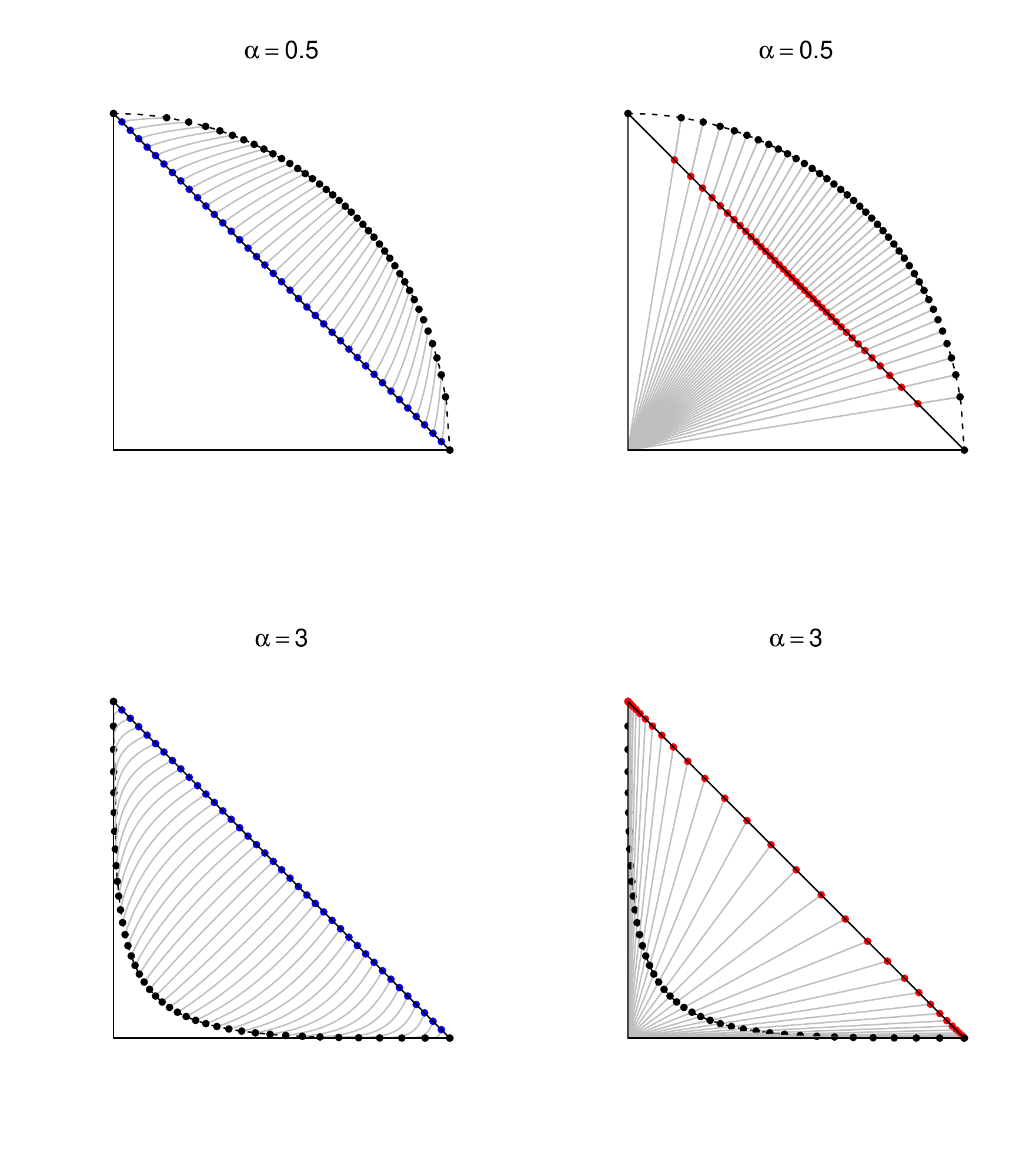}
		\vspace{-0.8cm}
		\caption{Illustration of the escort transformation $p \mapsto \mathcal{E}_{\alpha}[p]$, where $p = (p_1, p_2)$ is a probability vector with length $2$. Left: $p$ (in blue) on the unit simplex is first mapped to $p^{\alpha} = (p_1^{\alpha}, p_2^{\alpha})$ (in black). The curve (in grey) shows the trajectory $t \mapsto p^{(1 - t) + t\alpha}$ for $0 \leq t \leq 1$. Right: Normalize $p^{\alpha}$ along the straight line (in grey) passing through the origin to obtain the escort distribution $\widetilde{p} = \mathcal{E}_{\alpha}[p] = p^{\alpha}/\int p^{\alpha} d\nu$ (in red). (Top row: $\alpha = 0.5$. Bottom row: $\alpha = 3$.) Note that when $\alpha < 0$ the image of $p \mapsto p^{\alpha}$ becomes unbounded.}
		\label{fig:escort}
	\end{figure}
	
	Consider now a $q$-exponential family \eqref{eqn:q.exponential.family} which is a deformation of the exponential family. Here, the exponential function $\exp$ is replaced by the deformed exponential $\exp_q$ given by \eqref{eqn:q.exp}. When $q > 0$, the $q$-potential function $\phi_q$ in \eqref{eqn:q.exponential.family} can be shown to be convex (this requires differentiability under the integral sign), and hence defines a dual variable $\eta = \nabla \phi_q(\theta)$ via the Legendre transformation. To interpret $\eta$ probabilistically we recall the concept of {\it escort distribution} which arises naturally in the study of generalized exponential families and their applications in non-extensive statistical physics \cite{N11}.  Given a probability density $p(x)$ (or more generally a non-negative function which is not $\nu$-almost everywhere zero) with respect to the reference measure $\nu$, and an exponent $\alpha \in \mathbb{R} \setminus \{0\}$, we define the {\it escort distribution with exponent $\alpha$} be the density
	\begin{equation} \label{eqn:escort.transformation.def}
		\widetilde{p} = \mathcal{E}_{\alpha}[p] := \frac{p^{\alpha}}{\int p^{\alpha} \mathrm{d}\nu},
	\end{equation}
	provided that the integral $\int p^{\alpha} \mathrm{d}\nu$ is finite. Some physical and information-theoretic interpretations of the escort distribution are given in \cite{Abe03, B09}. Now we may interpret the dual parameter $\eta$ as an {\it escort expectation}:
	\begin{equation} \label{eqn:escort}
		\eta = \widetilde{\mathbb{E}}_{\theta} [ F(X) ] := \int F(x) \widetilde{p}_{\theta}(x) \mathrm{d}\nu(x),
	\end{equation}
	where $\widetilde{p}_{\theta} = \mathcal{E}_{q}[p_{\theta}]$ is the escort distribution with exponent $q$. It is helpful to think of the escort transformation as a composition of two operations, namely $p \mapsto p^{\alpha}$ and the normalization $p^{\alpha} \mapsto p^{\alpha} / \int p^{\alpha} \mathrm{d}\nu$. In Figure \ref{fig:escort} we illustrate these operations where $p$ is a density function on a two-point set with respect to the counting measure, so $p$ can be identified with a probability vector $p = (p_1, p_2)$. For an exponential family the escort transformation is equivalent to a dilation with respect to the natural parameter, i.e., $\mathcal{E}_{\alpha}[p_{\theta}] = p_{\alpha \theta}$, whenever $\alpha \theta$ belongs to the parameter set. This corresponds to the fact that the escort transformation is the scalar multiplication under the {\it Aitchison geometry} in compositional data analysis \cite{PB11}.
	
	It can be shown (see e.g.~\cite[Section 4]{AO11}) that densities of the form \eqref{eqn:q.exponential.family} maximize the Tsallis entropy under constraints on the   escort expectation; in Theorem \ref{thm:Renyi.entropy.maximize}, we give a new proof of this result using our $\lambda$-duality. The Tsallis (or equivalently R\'{e}nyi) entropy maximization property gives a theoretical justification of the $q$-exponential family. Nevertheless, other fundamental properties of exponential families described in Section \ref{sec:Legendre.duality} do not have ``exact'' analogues in previous treatments of the $q$-exponential family. For example, as first shown in \cite{AO11}, the Bregman divergence of $\phi_q$ is not the Tsallis relative entropy, and the associated Riemannian metric is not the Fisher metric but is a conformal transformation of it. Also see \cite{SMW20} for a more recent attempt. In this paper, we show that our framework using $\lambda$-duality provides natural and elegant statements which nicely parallel the case of exponential and mixture families.

	\subsection{Generalized $\lambda$-duality} \label{sec:lambda.duality}
	The key idea of this paper is the following. Instead of deforming the exponential and logarithm functions, we deform the notion of conjugation, and hence the convex duality, by replacing the pairing $u \cdot v$ in \eqref{eqn:convex.conjugate} by a nonlinear function $-c_{\lambda}$ of it. This generalized duality, which can be defined for a generic cost function $c(u, v)$, is well-known in the optimal transport literature \cite{S15,V03,V08} in characterizations of optimal transport plans. The classical Legendre duality corresponds to $c(u, v) = -u \cdot v$ which is the cross term of the quadratic cost $\frac{1}{2}|u - v|^2$ when expanded. In what follows we let a constant $\lambda \in \mathbb{R} \setminus \{0\}$ be given and, for reasons that will become clear in Section \ref{sec:information.geometry}, call it the {\it curvature parameter}. It is related to the $q$ parameter via $\lambda = 1 - q$. Thus the usual exponential family (the limit as $\lambda \rightarrow 0$) corresponds to zero curvature (dual flatness). When applying the $\lambda$-duality to the $\lambda$-exponential family, we typically assume (as in Section \ref{sec:classic.deformation}) that $q > 0$ (so that $\exp_q$ is convex), or equivalently $\lambda < 1$. The specific functional form of the logarithmic cost function $c_{\lambda}$ is motivated by previous works of Pal and the first author \cite{PW16,PW18,PW18b,W18} which led to tractable results; see in particular \cite{PW18b} where we obtained an analogue of Brenier's theorem for the Dirichlet transport problem on the unit simplex. Here we streamline the treatment by introducing the unifying parameter $\lambda$ (rather than $\pm \alpha$ as in \eqref{eqn:F.alpha}), and work instead with $c$-convex functions (rather than $c$-concave functions which are more common in optimal transport theory) so that the notations parallel those in Section \ref{sec:Legendre.duality}. 
	
	\medskip
	
	By continuity, we let $\log t = -\infty$ for $t \leq 0$ and $e^{-\infty} = 0$. In particular, we have
	\begin{equation} \label{eqn:e.log}
		e^{\log t} = t_+, \quad t \in \mathbb{R}.
	\end{equation}
	
	\begin{definition} [$\lambda$-duality]  \label{def:lambda.duality}
		Fix $\lambda \in \mathbb{R} \setminus \{0\}$.
		\begin{itemize}
			\item[(i)] We define $c_{\lambda} : \mathbb{R}^d \times \mathbb{R}^d \rightarrow \bar{\mathbb{R}} := \mathbb{R} \cup \{\pm \infty\}$ by
			\begin{equation} \label{eqn:lambda.cost}
				c_{\lambda}(u, v) = \frac{-1}{\lambda} \log (1 + \lambda u \cdot v).
			\end{equation}
			\item[(ii)] If $f: \Omega \subset \mathbb{R}^d \rightarrow \bar{\mathbb{R}}$, we define  $f^{c_{\lambda}} : \mathbb{R}^d \rightarrow \bar{\mathbb{R}}$ by
			\begin{equation} \label{eqn:lambda.transform}
				f^{c_{\lambda}}(v) = \sup_{u \in \Omega} \left\{ -c_{\lambda}(u, v) - f(u) \right\}, \quad v \in \mathbb{R}^d,
			\end{equation} 
			where by convention we set $+\infty - (+\infty) = -\infty - (-\infty) = -\infty$. We call $f^{c_{\lambda}}$ the $c_{\lambda}$-conjugate of $f$.
			\item[(iii)] A function $f: \Omega \rightarrow \mathbb{R} \cup \{+\infty\}$ is $c_{\lambda}$-convex on $\Omega$ if $f \not\equiv +\infty$ and $f = g^{c_{\lambda}}$ on $\Omega$  for some $\Omega' \subset \mathbb{R}^d$ and $g: \Omega' \rightarrow \mathbb{R} \cup \{+ \infty\}$. 
		\end{itemize}
	\end{definition}
	
	\begin{figure}[!t]
		\centering
		\includegraphics[width=3.48in]{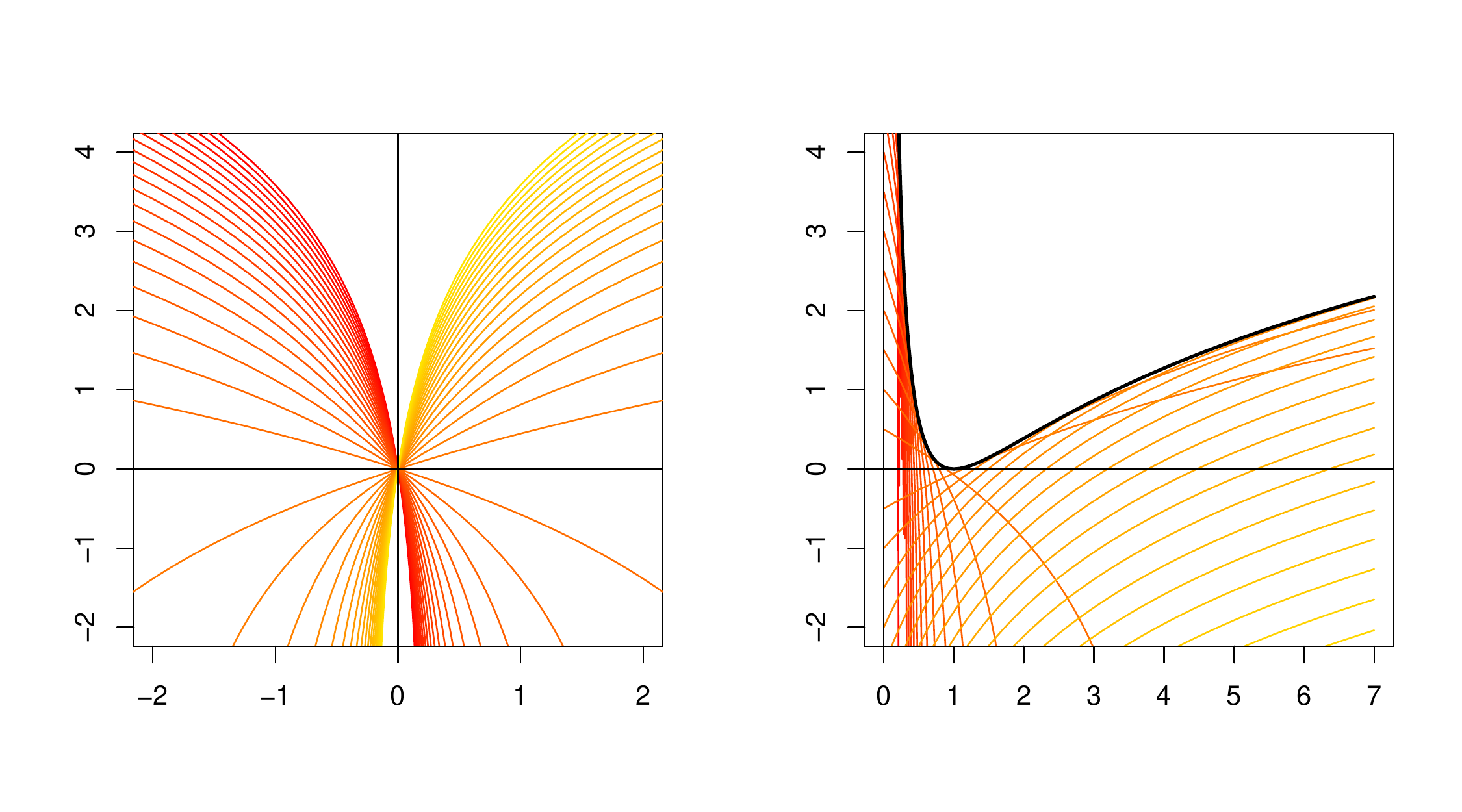}
		\vspace{-0.7cm}
		\caption{Illustration of $\lambda$-duality on the real line where $\lambda = 0.5$. Left: Graphs of the mappings $u \mapsto \frac{1}{\lambda} \log(1 + \lambda uv)$ for several values of $v$ in the interval $[-5, 5]$. Right: The function $f(u) = \frac{1}{\lambda} \left( \frac{1}{u} - 1 + \log u\right)$, which is $c_{\lambda}$-convex on $\Omega = (0, \infty)$, shown as the upper envelope of functions of the form $\frac{1}{\lambda} \log(1 + \lambda uv) - g(v)$, where $g(v) = v$. Note that $f$ is not convex in the usual sense. }
		\label{fig:conjugate}
	\end{figure}
	
	
	This generalized duality is illustrated in Figure \ref{fig:conjugate} where $\lambda = 0.5$. Note that for $u, v \in \mathbb{R}^d$ we have $\lim_{\lambda \rightarrow 0} c_{\lambda}(u, v) = u \cdot v$. Thus, when $\lambda \rightarrow 0$ we recover the usual convex duality. Observe also that
	\begin{equation} \label{eqn:exp.c}
		e^{-c_{\lambda}(u, v)} = (1 + \lambda u \cdot v)_+^{1/\lambda} = \exp_{q}  (u \cdot v),
	\end{equation}
	where $q = 1 - \lambda$. As will be seen in Section \ref{sec:lambda.exp.family}, this and the $\lambda$-duality allow us to give an alternative treatment of the $q$-exponential family without deforming the exponential function. 
	
	
	In optimal transport (see e.g.~\cite{AG13}), the operation \eqref{eqn:lambda.transform} for a generic cost function $c(x, y)$ is called the $c_-$-transform (the $c_+$-transform, which involves an infimum rather than a supremum, leads to $c$-concave functions). In general, it is not possible to characterize $c$-concave and $c$-convex functions explicitly in terms of familiar convexity concepts. The cost function $c_{\lambda}$ is quite special as shown by the following result which specifies a class of ``nice'' $c_{\lambda}$-functions useful for our applications. Note that in Definition \ref{def:lambda.duality}(iii) $f$ and $g$ are defined on respective domains $\Omega, \Omega' \subset \mathbb{R}^d$. In the context of Theorem \ref{thm:lambda.duality} below, this allows us to deduce that $1 + \lambda u \cdot v > 0$ for $(u, v) \in \Omega \times \Omega'$ and avoid infinity values. Developing the duality and differential theory in full generality along the lines of Rockafellar's classic treatise \cite{R70} is of independent mathematical interest and is left for future research. Also see the discussion in Section \ref{sec:dual.divergence}.
	
	\begin{theorem} [Main results of $\lambda$-duality] \label{thm:lambda.duality}
		Let $\lambda \neq 0$ and let $\Omega \subset \mathbb{R}^d$ be an open convex set. Consider a smooth function $f: \Omega \rightarrow \mathbb{R}$ such that the Hessian of $F_{\lambda} = \frac{1}{\lambda} (e^{\lambda f} - 1)$ is strictly positive definite (hence $F_{\lambda}$ is strictly convex) and $1 - \lambda \nabla f(u) \cdot u > 0$ on $\Omega$. Then:
		\begin{enumerate}
			\item[(i)] $f$ is $c_{\lambda}$-convex function on $\Omega$.
			\item[(ii)] Define for $u \in \Omega$ the mapping
			\begin{equation} \label{eqn:c.Legendre.transform}
				v = \nabla^{c_{\lambda}} f(u) := \frac{1}{1 - \lambda \nabla f(u) \cdot u} \nabla f(u).
			\end{equation}
			Then $\nabla^{c_{\lambda}} f$ is a diffeomorphism from $\Omega$ onto its range $\Omega'$ which is an open set. We call $\nabla^{c_{\lambda}} f$ the $\lambda$-deformed Legendre transformation, or simply the $\lambda$-gradient.
			\item[(iii)] Consider $f^{c_{\lambda}}$ as a function on the range $\Omega' = \nabla^{c_{\lambda}} f(\Omega)$. Then $(f^{c_{\lambda}})^{c_{\lambda}} = f$ on $\Omega$.
			\item[(iv)] For $u \in \Omega$ and $v = \nabla^{c_{\lambda}} f(u) \in \Omega'$ we have $1 + \lambda u \cdot v > 0$ and the following identity holds:
			\begin{equation} \label{eqn:lambda.Fenchel.conjugate}
				f(u) + f^{c_{\lambda}}(v) \equiv -c_{\lambda}(u, v) = \frac{1}{\lambda} \log (1 + \lambda u \cdot v).
			\end{equation}
			In particular, $f^{c_{\lambda}}$ is smooth on $\Omega'$.
			\item[(v)] The inverse of $\nabla^{c_{\lambda}} f$ is given by
			\[
			\nabla^{c_{\lambda}} f^{c_{\lambda}}(v) := \frac{1}{1 - \lambda \nabla f^{c_{\lambda}}(v) \cdot v} \nabla f^{c_{\lambda}}(v),
			\]
			which is well-defined on $\Omega'$.
		\end{enumerate}
	\end{theorem}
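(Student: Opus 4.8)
The plan is to transfer the whole statement to the genuinely convex function $F_{\lambda}=\frac{1}{\lambda}(e^{\lambda f}-1)$ and to classical Legendre duality, using the bookkeeping identities $e^{\lambda f}=1+\lambda F_{\lambda}>0$, $\nabla F_{\lambda}=e^{\lambda f}\nabla f$, and $\nabla^{2}F_{\lambda}=e^{\lambda f}(\lambda\,\nabla f\nabla f^{\top}+\nabla^{2}f)$. The cornerstone is a ``divergence inequality'': for all $u,u'\in\Omega$,
\[
f(u')\ \ge\ f(u)+\tfrac{1}{\lambda}\log\!\bigl(1+\lambda\,\nabla f(u)\cdot(u'-u)\bigr),
\]
with equality iff $u'=u$. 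I would derive this from the first-order convexity estimate $F_{\lambda}(u')\ge F_{\lambda}(u)+\nabla F_{\lambda}(u)\cdot(u'-u)$, which rearranges to $e^{\lambda f(u')}\ge e^{\lambda f(u)}\bigl(1+\lambda\nabla f(u)\cdot(u'-u)\bigr)$ when $\lambda>0$ and to the reversed inequality when $\lambda<0$; taking logarithms and dividing by $\lambda$, the sign flip for $\lambda<0$ is exactly compensated. For $\lambda<0$ this also yields $1+\lambda\nabla f(u)\cdot(u'-u)>0$ for all $u,u'\in\Omega$ (the right-hand side of the reversed inequality must be positive), which is the mechanism behind the claim, made after Definition \ref{def:lambda.duality}, that $1+\lambda u\cdot v>0$ on $\Omega\times\Omega'$.

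Granting this, parts (iv), (iii), (i) are an envelope argument. For $u\in\Omega$ and $v=\nabla^{c_{\lambda}}f(u)$ as in \eqref{eqn:c.Legendre.transform}, a one-line computation gives $1+\lambda u\cdot v=(1-\lambda\nabla f(u)\cdot u)^{-1}>0$ and the critical-point identity $\nabla f(u)=v/(1+\lambda u\cdot v)$; substituting the latter into the divergence inequality shows that $u'\mapsto\frac{1}{\lambda}\log(1+\lambda u'\cdot v)-f(u')=-c_{\lambda}(u',v)-f(u')$ is maximized over $\Omega$ uniquely at $u'=u$, so $f^{c_{\lambda}}(v)=\frac{1}{\lambda}\log(1+\lambda u\cdot v)-f(u)$, i.e.\ \eqref{eqn:lambda.Fenchel.conjugate}; smoothness of $f^{c_{\lambda}}$ on $\Omega'$ then follows from this closed form once part (ii) supplies a smooth inverse $v\mapsto u$. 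Part (iii) is a two-sided bound on $(f^{c_{\lambda}})^{c_{\lambda}}(u)$: testing the supremum at $v=\nabla^{c_{\lambda}}f(u)$ and invoking \eqref{eqn:lambda.Fenchel.conjugate} gives ``$\ge f(u)$'', while plugging $u'=u$ into the definition \eqref{eqn:lambda.transform} of $f^{c_{\lambda}}(v)$ gives $f^{c_{\lambda}}(v)\ge-c_{\lambda}(u,v)-f(u)$ for every $v\in\Omega'$, hence ``$\le f(u)$'' (the positivity noted above, together with the $\pm\infty$ conventions, keeps this valid). Part (i) is then immediate: $f=(f^{c_{\lambda}})^{c_{\lambda}}=g^{c_{\lambda}}$ on $\Omega$ with $g=f^{c_{\lambda}}|_{\Omega'}$ finite-valued, so $f$ is $c_{\lambda}$-convex.

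The hard part will be part (ii), because $\nabla^{2}f$ need not be invertible (only $\nabla^{2}F_{\lambda}$ is), so one cannot differentiate \eqref{eqn:c.Legendre.transform} naively. I would handle it by the factorization $\nabla^{c_{\lambda}}f=\Psi\circ\nabla F_{\lambda}$, where $\Psi(w)=w/(1-\lambda F_{\lambda}^{*}(w))$ and $F_{\lambda}^{*}$ is the ordinary convex conjugate of $F_{\lambda}$ on the open set $W=\nabla F_{\lambda}(\Omega)$: for $w=\nabla F_{\lambda}(u)$ one computes $1-\lambda F_{\lambda}^{*}(w)=e^{\lambda f(u)}(1-\lambda\nabla f(u)\cdot u)>0$, which makes $\Psi$ well-defined and smooth on $W$ and shows $\Psi(\nabla F_{\lambda}(u))$ equals the right-hand side of \eqref{eqn:c.Legendre.transform}. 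Since $F_{\lambda}$ is smooth and strictly convex, $\nabla F_{\lambda}\colon\Omega\to W$ is a diffeomorphism; and using $\nabla F_{\lambda}^{*}(w)=u$, a rank-one determinant computation gives $\det D\Psi(w)=e^{\lambda f(u)}\big/\bigl(1-\lambda F_{\lambda}^{*}(w)\bigr)^{d+1}\neq0$, so $\nabla^{c_{\lambda}}f$ has everywhere-nonsingular differential and is in particular an open map. Injectivity of $\nabla^{c_{\lambda}}f$ comes for free from the envelope argument of part (iv): if $\nabla^{c_{\lambda}}f(u_1)=\nabla^{c_{\lambda}}f(u_2)=v$, then $u_1$ and $u_2$ are each the unique maximizer over $\Omega$ of $u'\mapsto-c_{\lambda}(u',v)-f(u')$, forcing $u_1=u_2$; hence $\nabla^{c_{\lambda}}f$ is a diffeomorphism onto the open image $\Omega':=\nabla^{c_{\lambda}}f(\Omega)$. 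Finally, part (v) follows by differentiating $f^{c_{\lambda}}(v)=\frac{1}{\lambda}\log(1+\lambda\,u(v)\cdot v)-f(u(v))$: the $Du(v)$-terms cancel because of the critical-point identity, leaving $\nabla f^{c_{\lambda}}(v)=u/(1+\lambda u\cdot v)$, whence $1-\lambda\nabla f^{c_{\lambda}}(v)\cdot v=(1+\lambda u\cdot v)^{-1}>0$ and $\nabla^{c_{\lambda}}f^{c_{\lambda}}(v)=u$, i.e.\ $\nabla^{c_{\lambda}}f^{c_{\lambda}}$ is the inverse of $\nabla^{c_{\lambda}}f$ on $\Omega'$.
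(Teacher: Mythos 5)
Your proposal is correct, and its engine is the same as the paper's: everything is pulled back to ordinary convexity of $F_{\lambda}=\frac{1}{\lambda}(e^{\lambda f}-1)$ and then pushed through the logarithm. The differences are organizational but worth noting. The paper proves only part (i) in the Appendix, by writing $\frac{1}{\lambda}e^{\lambda f}$ as the upper envelope of its tangent hyperplanes, factoring out $1-\lambda\nabla f(u')\cdot u'>0$, and taking logarithms to exhibit $f$ directly as a $c_{\lambda}$-transform; parts (ii)--(v) are cited from \cite{W18}. You instead take as cornerstone the first-order ``divergence inequality'' (which is exactly the inequality \eqref{eqn:lambda.log.derivation} the paper uses to define the $\lambda$-logarithmic divergence), derive (iv) from the resulting critical-point/unique-maximizer identity, obtain (iii) by the standard two-sided biconjugation bound, and only then deduce (i) as a corollary of $(f^{c_{\lambda}})^{c_{\lambda}}=f$. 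This is logically equivalent to the paper's envelope argument but packages (i), (iii) and (iv) together, and it has the virtue of making the uniqueness of the maximizer (hence injectivity of $\nabla^{c_{\lambda}}f$) fall out for free from strict convexity of $F_{\lambda}$. Your treatment of (ii) via the factorization $\nabla^{c_{\lambda}}f=\Psi\circ\nabla F_{\lambda}$ with $1-\lambda F_{\lambda}^{*}(w)=e^{\lambda f(u)}(1-\lambda\nabla f(u)\cdot u)$ and the rank-one determinant identity $\det D\Psi(w)=e^{\lambda f(u)}/(1-\lambda F_{\lambda}^{*}(w))^{d+1}$ is a clean, self-contained substitute for the argument deferred to \cite{W18}, and correctly sidesteps the non-invertibility of $\nabla^{2}f$. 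All the computations I checked ($1+\lambda u\cdot v=(1-\lambda\nabla f(u)\cdot u)^{-1}$, the cancellation of the $Du(v)$ terms in (v), the handling of the $-\infty$ convention when $\lambda>0$) are right.
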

	\begin{proof}
		See \cite[Section 3.3]{W18}. Here we rephrased the results in terms of the $\lambda$-duality. To illustrate some of the ideas involved, we provide the proof of (i) in the Appendix.
	\end{proof}

	The definition of the $\lambda$-gradient $\nabla^{c_{\lambda}} f(u)$ can be motivated by the optimality condition in \eqref{eqn:lambda.transform} (compare with \eqref{eqn:convex.conjugate}). The $\lambda$-gradient (analogous to the Brenier map) can be interpreted as an optimal transport map under the logarithmic cost $c_{\lambda}$. For the geometric meaning of the condition $1 - \lambda \nabla f(u) \cdot u > 0$ see the proof of Theorem \ref{thm:lambda.duality}(i) in the Appendix. Analytically, it allows us to apply convex/concave duality to $e^{\lambda f}$ and then take logarithm to obtain a generalized convex duality based on the logarithmic cost $c_{\lambda}$. This is essentially a normalization which makes $0$ a reference point (see the left panel of Figure \ref{fig:conjugate}). By Theorem \ref{thm:c.exp.potential}, it holds for the divisive $\lambda$-potential of the $\lambda$-exponential family under suitable regularity conditions. Geometrically, $f = \frac{1}{\lambda} \log (1 + \lambda F_{\lambda})$ is simply a multiple of the logarithm of a positive convex/concave function (depending on the sign of $\lambda$). By Theorem \ref{thm:lambda.duality}(ii), it is given on $\Omega$ as the supremum of a collection of vertically translated logarithmic functions (right panel of Figure \ref{fig:conjugate}).

	It is convenient to introduce a terminology for the functions that satisfy the hypotheses of Theorem \ref{thm:lambda.duality}.
	
	\begin{definition} [Regular $c_{\lambda}$-convex function]
		By a regular $c_{\lambda}$-convex function we mean a function $f$ which satisfies the hypotheses of Theorem \ref{thm:lambda.duality}.
	\end{definition}
	
In the context of Theorem \ref{thm:lambda.duality}, the usual convexity is replaced by convexity of the transformation $F_{\lambda} = \frac{1}{\lambda} (e^{\lambda f} - 1)$, and the $\lambda$-gradient $\nabla^{c_{\lambda}} f$ defines a new dual variable. Note that the additive- term $-1/\lambda$ in $F_{\lambda}$ is not necessary and is included so that as $\lambda \rightarrow 0$ we have $F_{\lambda} \rightarrow f$. 
	
If $f$ is $C^2$ (twice continuously differentiable), then $F_{\lambda}$ is convex if and only if the matrix
\begin{equation} \label{eqn:second.derivative.condition}
e^{-\lambda f(u)} \nabla^2 F_{\lambda}(u) = \nabla^2 f(u) + \lambda (\nabla f(u))(\nabla f(u))^{\top}
\end{equation}
is positive semidefinite on $\Omega$ (here and throughout $\nabla f(u)$ is regarded as a column vector and $\cdot^{\top}$ denotes transposition). Note that when $\lambda < 0$ then
\[
\nabla^2 f(u) \succeq (-\lambda)  (\nabla f(u))(\nabla f(u))^{\top} \succeq 0,
\]
so that $f$ itself is convex (here $\succeq$ is the Loewner order). In Section \ref{sec:information.geometry}, we will use \eqref{eqn:second.derivative.condition} to define a Riemannian metric on $\Omega$.
	
	
	\begin{remark} [Exponential convexity and concavity] \label{rmk:exp.concavity.convexity}
		Let $f$ be regular $c_{\lambda}$-convex. If $\lambda > 0$, then $e^{\lambda f}$ is convex on $\Omega$; following \cite{PW16, PW18, W18}, we say that $f$ is {\it $\alpha$-exponentially convex} on $\Omega$ with $\alpha = \lambda$. If $\lambda < 0$, then $e^{|\lambda| (-f)}$ is a positive concave function, and we say that $-f$ is {\it $\alpha$-exponentially concave} with $\alpha = |\lambda|$. The present framework unifies the two cases in \cite[Section 3]{W18}.
	\end{remark}

    We close this subsection by making the observation (not used in the rest of the paper) that the transformation $t \mapsto \frac{1}{\lambda} \log (1 + \lambda t)$, which characterizes the deformation in $c_{\lambda}$, as well as its inverse $s \mapsto \frac{1}{\lambda} (e^{\lambda s} - 1)$, which defines $F_{\lambda}$ in Theorem \ref{thm:lambda.duality}, are closely related to the {\it Box–Cox power transformation} $s^{(\lambda)} = \frac{1}{\lambda} (s^{\lambda} - 1)$ (and its inverse) introduced in \cite{BC64}, where the same $\lambda$ parameter is used. Specifically, we have $F_{\lambda} = (\exp f)^{(\lambda)}$. 
	
	
	\subsection{$\lambda$-logarithmic divergence} \label{sec:lambda.log.divergence}
	Recall the derivation of the Bregman divergence \eqref{eqn:Bregman.divergence}. If $f$ is convex and differentiable, for any $u, u'$ we have
	\begin{equation} \label{eqn:f.convexity}
		f(u') + \nabla f(u') \cdot (u - u') \leq f(u).
	\end{equation}
	The Bregman divergence ${\bf B}_f[u : u']$ is defined by taking the difference.
	
	\begin{figure}[!t]
		\centering
		\includegraphics[width=3.48in]{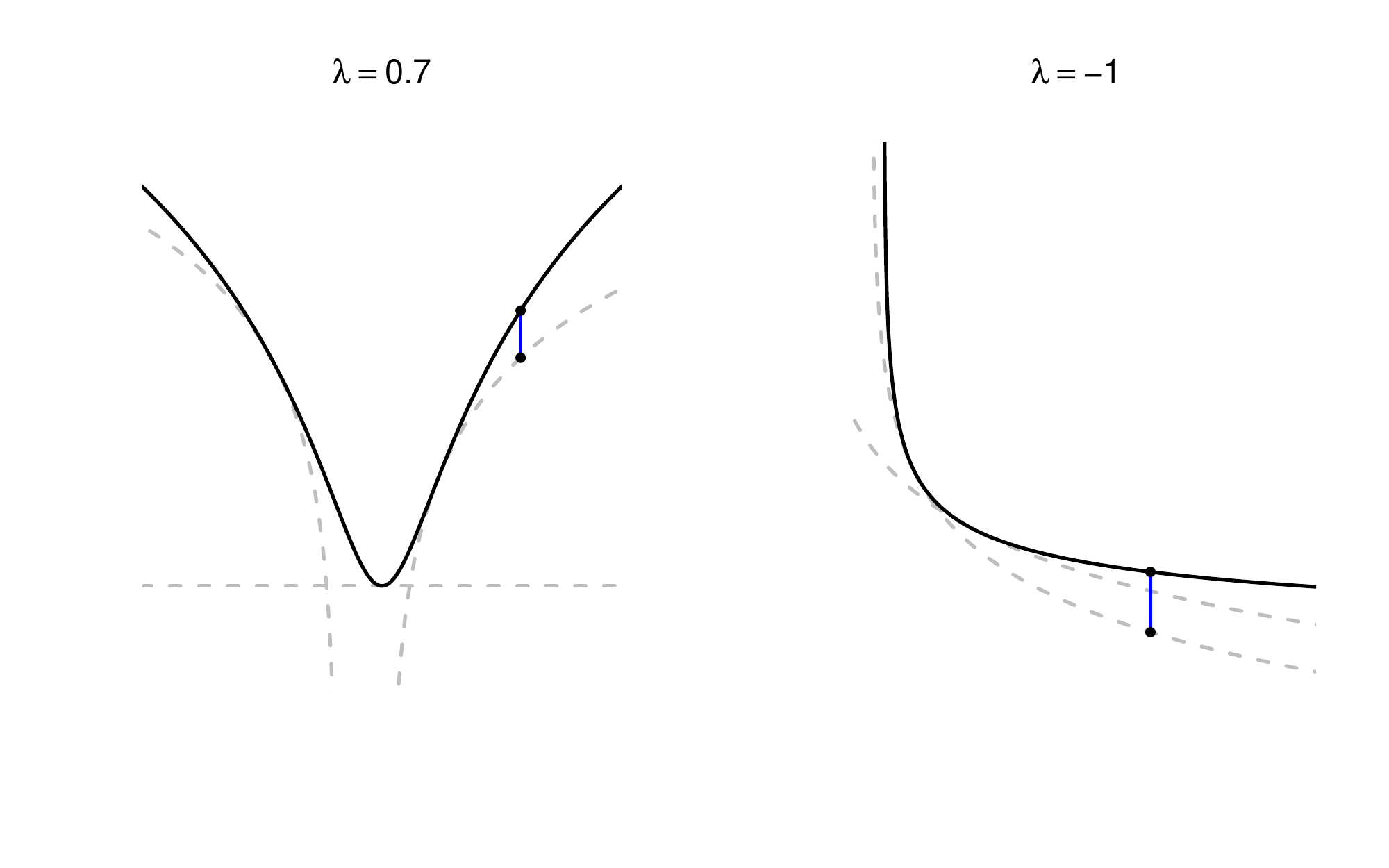}
		\vspace{-1cm}
		\caption{Illustration of $\lambda$-logarithmic divergence where $f$ is defined on an interval of the real line. Left: $\lambda = 0.7$. Right: $\lambda = -1$. The graph of $f$ is shown as a solid curve (in black). The logarithmic first order approximation, which by construction supports the graph of $f$ from below, is shown as a dashed curve (in grey). Its error, shown as the vertical line segment in blue, gives the value of ${\bf L}_{\lambda, f}[u : u']$.}
		\label{fig:ldiv}
	\end{figure} 
	
	Our $\lambda$-duality leads to a different divergence. Consider a function $f$ on a convex set $\Omega$ such that $F_{\lambda} = \frac{1}{\lambda} (e^{\lambda f} - 1)$ is convex. If $f$ is differentiable at $u' \in \Omega$, a convexity argument (write \eqref{eqn:f.convexity} for $F_{\lambda}$ then rearrange) gives the inequality
	\begin{equation} \label{eqn:lambda.log.derivation}
		\frac{1}{\lambda} + \nabla f(u') \cdot (u - u') \leq \frac{1}{\lambda} e^{\lambda( f(u) - f(u'))}, \quad u \in \Omega.
	\end{equation}
	We can define a divergence by taking logarithm on both sides and rearranging. There are two cases depending on the sign of $\lambda$, but the resulting expression is the same. The following definition unifies the $L^{(\pm \alpha)}$-divergences introduced in \cite[Section 3]{W18}. A graphical illustration is shown in Figure \ref{fig:ldiv}. Also see \cite{WY19b} for the general framework of {\it $c$-divergence} which defines divergences based on optimal transport maps.
	
	\begin{definition} [$\lambda$-logarithmic divergence] \label{def:L.div}
		Let $\Omega \subset \mathbb{R}^d$ be convex and let $f : \Omega \rightarrow \mathbb{R}$ be a function such that $\frac{1}{\lambda} (e^{\lambda f} - 1)$ is convex. If $u, u' \in \Omega$ and $f$ is differentiable at $u'$, we define the $\lambda$-logarithmic divergence by
		\begin{equation} \label{eqn:lambda.logdivergence}
			{\bf L}_{\lambda, f}[u : u'] 
			= f(u) - f(u') - \frac{1}{\lambda} \log (1 + \lambda \nabla f(u') \cdot (u - u')).
		\end{equation}
	\end{definition}
	
	From \eqref{eqn:lambda.log.derivation}, we have ${\bf L}_{\lambda, f}[u : u'] \geq 0$; also ${\bf L}_{\lambda, f}[u' : u'] = 0$. When $\lambda > 0$, it is possible that $1 + \lambda \nabla f(u') \cdot (u - u') \leq 0$. When this happens, the definition implies that ${\bf L}_{\lambda, f}[u : u'] = \infty$. On the other hand, when $\lambda < 0$ we have ${\bf L}_{\lambda, f}[u : u'] < \infty$ for all $u, u' \in \Omega$. Clearly, if $\frac{1}{\lambda} (e^{\lambda f} - 1)$ is strictly convex, then ${\bf L}_{\lambda, f}[u : u']$ is strictly positive for $u \neq u'$. For later use, observe that if for a given function $f$ the right hand side of \eqref{eqn:lambda.logdivergence} is non-negative for $u, u' \in \Omega$, then by reversing the argument in \eqref{eqn:lambda.log.derivation} we see that $\frac{1}{\lambda} (e^{\lambda f} - 1)$ is convex.
	
	\medskip
	
	We end this section with two basic examples of our framework. The second example will be revisited in Section \ref{sec:lambda.mixture} where we introduce the $\lambda$-mixture family. 

\begin{example} [Excess growth rate] \label{eg:excess.growth}
This financial example, taken from \cite{PW16}, is the original motivation of the theory of logarithmic divergences. Consider $d \geq 2$ stocks. Over a holding period, suppose that the price of stock $i$ moves from $u_i'$ to $u_i$. We have $u, u' \in \Omega = (0, \infty)^d$. Consider a portfolio with weights $w_i$, where $w_i \geq 0$ and $\sum_{i = 1}^d w_i = 1$. Then the log return of the portfolio is given by
\[
\log \left( \sum_{i = 1}^d w_i \frac{u_i}{u_i'} \right).
\]
By Jensen's inequality, this is greater than or equal to the weighted average log return of the stocks:
\[
{\bf D}[u : u'] = \log \left( \sum_{i = 1}^d w_i \frac{u_i}{u_i'} \right) - \sum_{i = 1}^d w_i \log \frac{u_i}{u_i'} \geq 0. 
\]
We call this non-negative quantity the {\it excess growth rate} of the portfolio. Then ${\bf D}[u : u'] = {\bf L}_{-1, \varphi}[u : u']$ is the $(-1)$-logarithmic divergence of the function $\varphi(u) = - \sum_{i = 1}^d w_i \log u_i$ which is regular $c_{-1}$-convex ($\lambda = -1$) on $\Omega$.   This can be generalized to other portfolios and $\lambda < 0$; see \cite{PW16} and \cite{W19}, where the $L^{(\alpha)}$-divergence there is equivalent to our $(-\alpha)$-logarithmic divergence.
\end{example}

	\begin{example}[R\'{e}nyi entropy and divergence] \label{eg:Renyi.simplex}
		Consider the open unit simplex in $\mathbb{R}^{1 + d}$ given by
		\begin{equation} \label{eqn:unit.simplex}
			\Delta^{d} = \{ u = (u_0, u_1, \ldots, u_d) : u_i > 0, \sum_{i=0}^d u_i = 1\}.
		\end{equation}
		For $\lambda < 1$ and $\lambda \neq 0$, consider the negative (discrete) R\'{e}nyi entropy of order $q  = 1 - \lambda > 0$ given by
		\[
		\varphi_{\lambda}(p) = - {\bf H}_q^{\text{R\'{e}nyi}} (\widetilde{p}) = \frac{-1}{1 - q} \log \sum_{i = 0}^{d} \, \widetilde{p}_i^{q}, \quad p \in \Delta^{d},
		\]
		where $\widetilde{p} = \mathcal{E}_{1/q}[p]$ is the escort transformation with exponent $1/q$ (here $\nu$ is the counting measure). Then it is not difficult to verify that $\varphi_{\lambda}$ is a regular $c_{\lambda}$-convex function on $\Delta^d$ (see Figure \ref{fig:Renyi} for an illustration where $d = 1$).\footnote{We may think of $\varphi_{\lambda}$ as a function of $(p_1, \ldots, p_d)$ which takes values in an open convex set of $\mathbb{R}^d$.} The corresponding $\lambda$-logarithmic divergence is the {\it R\'{e}nyi divergence} (see \eqref{eqn:Renyi.divergence}) of the same order:
		\[
		{\bf L}_{\lambda, \varphi_{\lambda}}[ p : p'] = {\bf H}_{q}^{\text{R\'{e}nyi}}(\widetilde{p}||\widetilde{p}') = \frac{1}{q - 1} \log \sum_{i = 0}^{d} (\widetilde{p}_i)^q (\widetilde{p}_i')^{1 - q}.
		\]
		This extends the familiar fact, which we recover in the limit $\lambda \rightarrow 0$,  that the (discrete) KL-divergence is the Bregman divergence associated with the negative Shannon entropy as the convex potential function.
		
		\begin{figure}[!t]
			\centering
			\includegraphics[width=3.48in]{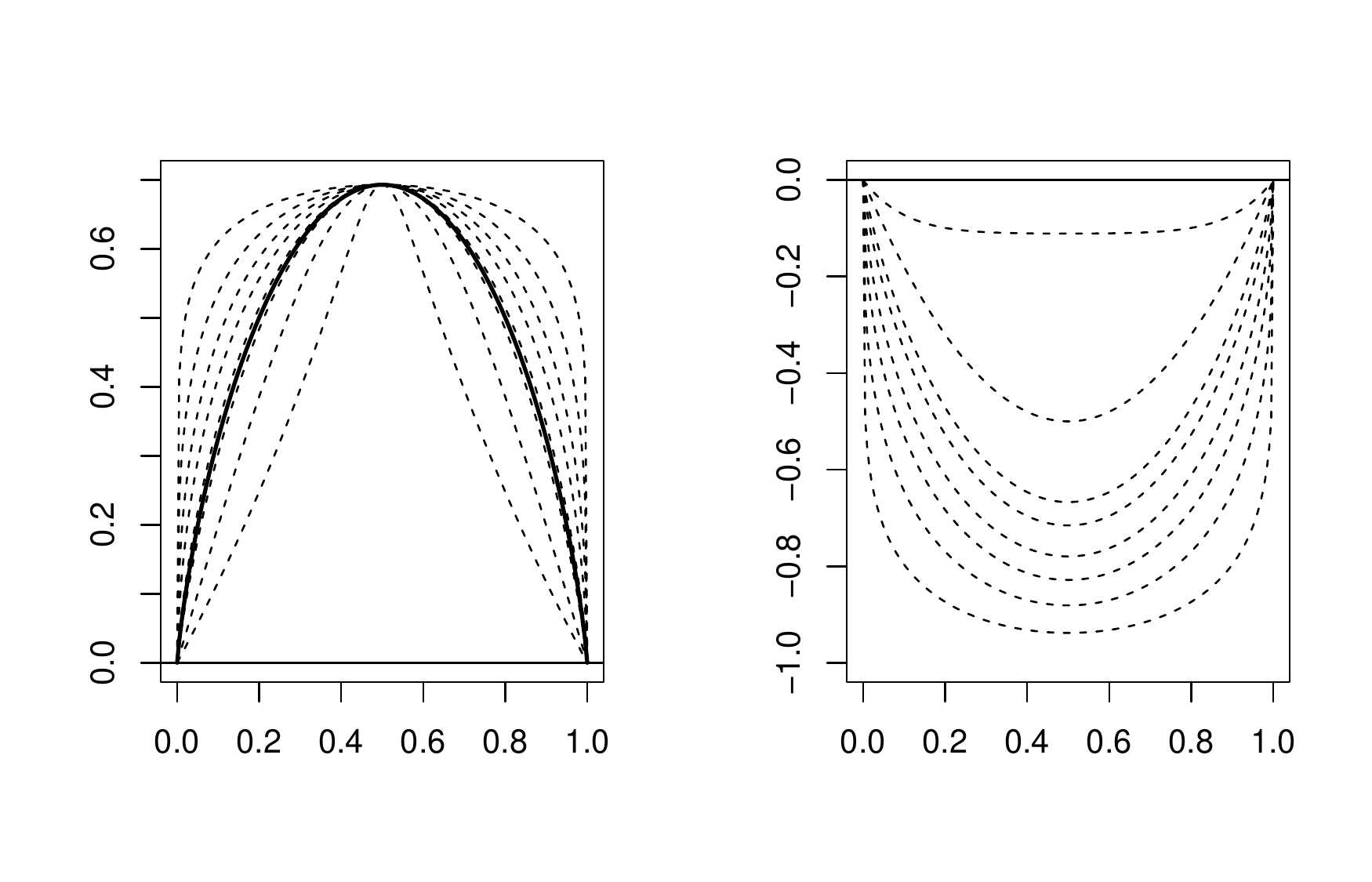}
			\vspace{-0.7cm}
			\caption{Left: The R\'{e}nyi entropy ${\bf H}_q^{\text{R\'{e}nyi}}(\widetilde{p})$ of the escort distribution $\widetilde{p}$, as a function of $p \in \Delta^1$ (identified with the unit interval), for $\lambda \in \{-5, -2, -1, -0.5, -0.1, 0.1, 0.5, 0.9\}$ and $q = 1 - \lambda$. The solid line is the Shannon entropy ($\lambda = 0$ or $q = 1$). Right: Graphs of the functions $\frac{1}{\lambda} (e^{\lambda \varphi_{\lambda}} - 1)$, which are convex, for the same values of $\lambda$. See Example \ref{eg:Renyi.simplex} which is a special case of the $\lambda$-mixture family introduced in Section \ref{sec:lambda.mixture}.}
			\label{fig:Renyi}
		\end{figure} 
	\end{example}

	\section{$\lambda$-exponential family} \label{sec:lambda.exp.family}
	In this section we introduce the $\lambda$-exponential family as an alternative representation of the $q$-exponential family tailored to the $\lambda$-duality studied in Section \ref{sec:generalized.duality}. It unifies the $\mathcal{F}^{(\pm \alpha)}$-families introduced in \cite[Section 4]{W18}. We also explain how the subtractive and divisive frameworks are related. Concrete examples will be given in Section \ref{sec:q.exp.examples}.
	
\subsection{Definition and two parameterizations}

%

\begin{definition} [$\lambda$-exponential family] \label{def:lambda.exp.family}
Let $\lambda \neq 0$ and $q = 1 - \lambda$. The $\lambda$-exponential family is the parameterized density, with respect to a given reference measure $\nu$, defined by
\begin{equation} \label{eqn:c.exponential.family} 
p_{\vartheta}(x) = p(x ; \vartheta) = e^{-c_{\lambda}(\vartheta, F(x)) - \varphi_{\lambda}(\vartheta)} = \exp_{q}(\vartheta \cdot F(x)) e^{-\varphi_{\lambda}(\vartheta)},
\end{equation}
where $F = (F_1, \ldots, F_d)$ is a vector of statistics. We also write $p_{\vartheta}(x) = p(x ; \vartheta)$. The potential function $\varphi_{\lambda}(\vartheta)$ is defined by the normalization $\int p(x ; \vartheta) d\nu(x) = 1$. Explicitly, we have
\begin{equation} \label{eqn:c.exp.normalization}
\varphi_{\lambda}(\vartheta) = \log \int e^{-c_{\lambda}(\vartheta, F(x))} \mathrm{d}\nu(x).
\end{equation}
The natural parameter set $\Omega$ is given by $\Omega = \{\vartheta \in \mathbb{R}^d : \varphi_{\lambda}(\vartheta) < \infty\}$.
\end{definition}
	
Note that we use the symbol $\vartheta$ for the ``canonical parameter'' of the $\lambda$-exponential family, and distinguish it from $\theta$ to be introduced in \eqref{eqn:vartheta.to.theta} below.

\begin{lemma} \label{lem:Omega.convex}
If $\lambda < 1$ (or $q > 0$), then the natural parameter set $\Omega$ is convex.
\end{lemma}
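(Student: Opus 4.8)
The plan is to show that $\varphi_{\lambda}$ as given by \eqref{eqn:c.exp.normalization} is a convex function of $\vartheta$ when $\lambda < 1$; since $\Omega$ is the sublevel set $\{\varphi_{\lambda} < \infty\}$ of a convex function, it is automatically convex. So the whole problem reduces to establishing convexity of
\[
\varphi_{\lambda}(\vartheta) = \log \int e^{-c_{\lambda}(\vartheta, F(x))} \, \mathrm{d}\nu(x) = \log \int \left( 1 + \lambda \vartheta \cdot F(x) \right)_+^{1/\lambda} \mathrm{d}\nu(x),
\]
using \eqref{eqn:exp.c}. The key structural observation is that the integrand $g(\vartheta, x) := (1 + \lambda \vartheta \cdot F(x))_+^{1/\lambda}$, for fixed $x$, is a convex function of $\vartheta$ precisely when $q = 1 - \lambda > 0$: indeed $\vartheta \mapsto 1 + \lambda \vartheta \cdot F(x)$ is affine, and $t \mapsto t_+^{1/\lambda} = \exp_q(\tfrac{t-1}{\lambda})$ is (up to affine reparametrization) the $q$-exponential, which the excerpt notes (right after \eqref{eqn:q.exp}) is convex iff $q > 0$. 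Hence each $g(\cdot, x)$ is convex on $\mathbb{R}^d$.

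The next step is to pass from pointwise convexity of the integrand to convexity of $\log \int g(\vartheta, x)\,\mathrm{d}\nu(x)$. This is the standard ``convexity plus log of an integral'' argument: first, $\vartheta \mapsto \int g(\vartheta, x)\,\mathrm{d}\nu(x)$ is convex as a ($\nu$-)integral of convex functions (monotone convergence / Tonelli justifies interchanging the convex-combination inequality with the integral, since $g \geq 0$); second, one wants $\log$ of this to remain convex. The clean route is a Hölder-type inequality: for $\vartheta_0, \vartheta_1 \in \Omega$ and $s \in [0,1]$, write $\vartheta_s = (1-s)\vartheta_0 + s\vartheta_1$ and use the pointwise bound $g(\vartheta_s, x) \le g(\vartheta_0, x)^{1-s} g(\vartheta_1, x)^{s}$ — which holds because $\log g(\cdot, x) = \frac{1}{\lambda}\log(1 + \lambda\vartheta\cdot F(x))$ is \emph{concave} in $\vartheta$ (composition of the concave increasing $\frac1\lambda\log(1+\lambda\,\cdot\,)$ with an affine map, valid for $\lambda \neq 0$, including $\lambda<0$). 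Then apply Hölder's inequality with exponents $\frac{1}{1-s}, \frac1s$ to get
\[
\int g(\vartheta_s, x)\,\mathrm{d}\nu \le \left(\int g(\vartheta_0,x)\,\mathrm{d}\nu\right)^{1-s}\left(\int g(\vartheta_1,x)\,\mathrm{d}\nu\right)^{s},
\]
and take logarithms to obtain $\varphi_{\lambda}(\vartheta_s) \le (1-s)\varphi_{\lambda}(\vartheta_0) + s\varphi_{\lambda}(\vartheta_1)$. This also shows $\vartheta_s \in \Omega$ directly, which is exactly the convexity of $\Omega$ we want.

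Actually, since the Hölder argument already delivers both the finiteness of $\varphi_{\lambda}(\vartheta_s)$ and the convexity inequality in one stroke, I would organize the proof around it directly rather than invoking pointwise convexity of $g$ at all — the concavity of $\vartheta \mapsto \log(1+\lambda\vartheta\cdot F(x))$ is what is really doing the work, and it is where the hypothesis $\lambda \ne 0$ enters (for $\lambda<1$ the exponents $1/\lambda$ and the base being nonnegative are what make Hölder applicable; note $q>0$ is equivalent to the outer power $1/\lambda$ having the sign needed so that the Hölder exponents $1/(1-s), 1/s$ pair correctly with it after writing $g^{1-s}g^s$). The main technical subtlety — and the only place to be careful — is the boundary behaviour of $t_+^{1/\lambda}$ where $1 + \lambda\vartheta\cdot F(x)$ can be zero or negative: when $\lambda > 0$ the base may vanish and one must check the pointwise inequality $g(\vartheta_s,x) \le g(\vartheta_0,x)^{1-s}g(\vartheta_1,x)^{s}$ still holds (it does: if either factor on the right is $0$ then by concavity of the log, the left side is $0$ too, since $1+\lambda\vartheta_s\cdot F(x) \ge (1+\lambda\vartheta_0\cdot F(x))^{1-s}(1+\lambda\vartheta_1\cdot F(x))^{s}$ only makes sense when both are positive — in the degenerate case one argues directly from affineness of $\vartheta \mapsto 1+\lambda\vartheta\cdot F(x)$ that the set where it is positive is convex in $\vartheta$). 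Handling this degenerate-support case carefully is the one genuine obstacle; everything else is the routine Hölder/Jensen computation.
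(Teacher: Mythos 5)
Your second paragraph already contains the paper's entire proof, and it is the argument you should keep: for $q>0$ the integrand $\exp_q(\vartheta\cdot F(x))$ is convex in $\vartheta$ for each fixed $x$, hence $\vartheta\mapsto\int\exp_q(\vartheta\cdot F)\,\mathrm{d}\nu$ is convex as a nonnegative integral of convex functions, and $\Omega$ is the set where this convex function is finite, hence convex. That is all the lemma needs. The reduction in your first paragraph to ``convexity of $\varphi_{\lambda}$'' is already an over-reach: convexity of the finiteness set follows from convexity of the un-logged integral, and in fact $\varphi_{\lambda}$ need not be convex when $0<\lambda<1$ --- the paper only ever asserts convexity of $\tfrac{1}{\lambda}(e^{\lambda\varphi_{\lambda}}-1)$ (Theorem \ref{thm:c.exp.potential}), and Figure \ref{fig:conjugate} exhibits a $c_{\lambda}$-convex function with $\lambda>0$ that is not convex.

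The H\"older organization you ultimately prefer has a genuine gap, and it is a sign error in two places. First, the pointwise bound $g(\vartheta_s,x)\le g(\vartheta_0,x)^{1-s}g(\vartheta_1,x)^{s}$ that H\"older requires is log-\emph{convexity} of $g(\cdot,x)$, not log-concavity: concavity of $\log g(\cdot,x)$ gives the reverse inequality, after which H\"older tells you nothing. Second, your parenthetical claim that $t\mapsto\tfrac{1}{\lambda}\log(1+\lambda t)$ is concave ``for $\lambda\neq 0$, including $\lambda<0$'' is false: its second derivative is $-\lambda/(1+\lambda t)^2$, so it is concave iff $\lambda>0$ and convex iff $\lambda<0$. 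Combining the two, your route actually goes through only for $\lambda<0$ (where $\log g(\cdot,x)$ genuinely is convex and one recovers the classical ``integral of log-convex functions is log-convex'' argument), and it fails precisely on the range $0<\lambda<1$, where the integrand is log-concave and the needed pointwise inequality is reversed --- consistent with the fact that $\varphi_{\lambda}$ itself need not be convex there. Since the lemma covers all $\lambda<1$, $\lambda\neq 0$, you cannot discard the integrand-convexity argument: it covers both signs of $\lambda$ at once, and it also disposes of the boundary issue you flag at the end, because the extended-real-valued function $\exp_q(\vartheta\cdot F(x))$ (with the convention $0^{1/\lambda}=+\infty$ for $\lambda<0$) is convex in $\vartheta$ on all of $\mathbb{R}^d$.
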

\begin{proof}
Note that $\Omega$ is the set of $\vartheta \in \mathbb{R}^d$ such that
\[
\int \exp_q(\vartheta \cdot F(x)) \mathrm{d}\nu(x) < \infty.
\]
Assume $q > 0$. Then $\exp_q(\vartheta \cdot F(x))$ is convex in $\vartheta$ (see the paragraph below \eqref{eqn:q.exp}). Hence $\Omega$ is a convex set.
\end{proof}

For the subsequent analysis we require some regularity conditions on the family. 
We believe some these conditions can possibly be deduced under appropriate assumptions on the family (in the spirit of the \cite{B14}). It is also of interest to study the $\lambda$-exponential family and its regularity properties from the viewpoint of nonparametric information geometry \cite{P13}. We leave this as well as a complete treatment of $\lambda$-duality to a future research.

\begin{assumption}[Regularity conditions] \label{ass:lambda.family}
Let $\lambda < 1$ (or $q = 1 - \lambda > 0$). We assume that  the natural parameter set $\Omega$ is a non-empty open convex set, and we may differentiate $\varphi_{\lambda}(\vartheta)$ under the integral sign (as many times as needed).
\end{assumption}

\begin{remark} [Interpretation of $\lambda$]
We have chosen to parameterize the function $c_{\lambda}$ in terms of $\lambda$ rather than $q = 1 - \lambda$. As will be explained in Section \ref{sec:information.geometry}, if we equip the $\lambda$-exponential family, regarded as a smooth manifold (with global coordinate system $\vartheta$), with the logarithmic divergence ${\bf L}_{\lambda, \varphi_{\lambda}}$, we obtain a statistical manifold (in the sense of information geometry \cite{a16,AN00}) with constant sectional curvature $\lambda$. Thus $\lambda$ can be thought of as the curvature parameter. On the other hand, the constant $q$ (when positive) is the order of the corresponding R\'{e}nyi entropy and divergence. Thus both $\lambda$ and $q$ are meaningful.
	\end{remark}
	
	\begin{lemma} \label{lem:positivity}
		Let $\mathbb{P}_{\vartheta}$ be a probability measure under which the random variable $X$ has density $p(\cdot ; \vartheta)$, where $\vartheta \in \Omega$ is fixed. Then $\mathbb{P}_{\vartheta}( 1 + \lambda \vartheta \cdot F(X) > 0) = 1$. 
	\end{lemma}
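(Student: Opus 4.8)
The plan is to reduce the assertion to a statement about $\nu$-null sets, using that $\mathbb{P}_{\vartheta}$ is absolutely continuous with respect to $\nu$, and then to treat the cases $\lambda > 0$ and $\lambda < 0$ separately, since they behave oppositely. Throughout, write $N := \{x : 1 + \lambda \vartheta \cdot F(x) \le 0\}$; the goal is to show $\mathbb{P}_{\vartheta}(N) = 0$. Since $\vartheta \in \Omega$, the potential $\varphi_{\lambda}(\vartheta)$ in \eqref{eqn:c.exp.normalization} is finite, so $p(\cdot ; \vartheta)$ is a genuine probability density with respect to $\nu$ and $\mathbb{P}_{\vartheta} \ll \nu$; hence it suffices to control the density on $N$.

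When $\lambda > 0$ (equivalently $q = 1 - \lambda < 1$), the exponent $1/\lambda$ is positive, so by \eqref{eqn:q.exp} and \eqref{eqn:exp.c} the function $\exp_q(\vartheta \cdot F(x)) = [1 + \lambda \vartheta \cdot F(x)]_+^{1/\lambda}$ vanishes identically on $N$. Consequently $p(x;\vartheta) = \exp_q(\vartheta \cdot F(x)) e^{-\varphi_{\lambda}(\vartheta)} = 0$ for $x \in N$, and integrating the density over $N$ gives $\mathbb{P}_{\vartheta}(N) = \int_N p(x;\vartheta)\,\mathrm{d}\nu(x) = 0$.

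When $\lambda < 0$ (equivalently $q > 1$), the situation is reversed: for $x \in N$ one has $1 + \lambda \vartheta \cdot F(x) \le 0$, hence $\log(1 + \lambda \vartheta \cdot F(x)) = -\infty$, and since $-1/\lambda > 0$ this gives $c_{\lambda}(\vartheta, F(x)) = -\infty$, i.e.\ $e^{-c_{\lambda}(\vartheta, F(x))} = +\infty$ on $N$. If $\nu(N)$ were positive, the integral $\int e^{-c_{\lambda}(\vartheta, F(x))}\,\mathrm{d}\nu(x)$ in \eqref{eqn:c.exp.normalization} would diverge, contradicting $\varphi_{\lambda}(\vartheta) < \infty$ (i.e.\ $\vartheta \in \Omega$). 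Hence $\nu(N) = 0$, and absolute continuity again yields $\mathbb{P}_{\vartheta}(N) = 0$. Combining the two cases, $\mathbb{P}_{\vartheta}(1 + \lambda \vartheta \cdot F(X) > 0) = 1$.

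There is no substantial obstacle here; the only delicate point is bookkeeping with the extended-real conventions ($\log t = -\infty$ for $t \le 0$, $0^{s} = +\infty$ for $s < 0$, $e^{-\infty} = 0$): positivity holds for $\lambda > 0$ because the density is \emph{zero} on $N$, and for $\lambda < 0$ because the density would be \emph{infinite} on $N$, which finiteness of $\varphi_{\lambda}(\vartheta)$ forbids. Notably, only the hypothesis $\vartheta \in \Omega$ is used — none of the differentiation-under-the-integral-sign content of Assumption \ref{ass:lambda.family} is needed.
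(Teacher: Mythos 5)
Your proof is correct and is essentially the paper's own argument, merely spelled out: the paper's one-line proof observes that on the set where $1+\lambda\vartheta\cdot F(x)\le 0$ the quantity $\exp_q(\vartheta\cdot F(x))$ is either $0$ (so the density vanishes there) or $+\infty$ (which finiteness of the normalizing integral, i.e.\ $\vartheta\in\Omega$, rules out off a $\nu$-null set), exactly matching your two cases $\lambda>0$ and $\lambda<0$. No issues.
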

	\begin{proof}
		We only need to note that if $1 + \lambda \vartheta \cdot F(x) \leq 0$ then $\exp_q(\vartheta \cdot F(x))$ is either $0$ or $+\infty$.
	\end{proof}
	
	Note that when $1 + \lambda \vartheta \cdot F(x) > 0$, as in Lemma \ref{lem:positivity}, then the density \eqref{eqn:c.exponential.family} is given by
	\begin{equation} \label{eqn:lambda.exp.simplified}
		p_{\vartheta}(x) = (1 + \lambda \vartheta \cdot F(x))^{\frac{1}{\lambda}} e^{-\varphi_{\lambda}(\vartheta)}.
	\end{equation}
	Comparing \eqref{eqn:lambda.exp.simplified} and \eqref{eqn:F.alpha}, we see that the $\lambda$-exponential family coincides with the $\mathcal{F}^{(-\alpha)}$-family when $\lambda = \alpha > 0$, and is equivalent to the $\mathcal{F}^{(\alpha)}$-family when $\lambda = -\alpha < 0$ (and $F$ is replaced by $-F$).

\begin{remark} [Support condition] \label{rmk:support}
In contrast to the exponential family, it is possible that the support of the density depends on the parameter $\vartheta$; an explicit example is the $q$-Gaussian distribution discussed in Example \ref{ex:q.Guassian}. Some derivations of this paper require that the support $\{x : p(x ; \vartheta) > 0\}$ is independent of $\vartheta$. When this holds, we say that the family satisfies the {\it support condition}. \footnote{The support condition is a useful technical condition which simplifies many proofs. It may be possible to remove this assumption in some results.}
\end{remark}
	
	Note that both \eqref{eqn:c.exponential.family} and \eqref{eqn:q.exponential.family} involve the $q$-exponential function. The difference lies in the way the density is normalized. In the $q$-exponential family, the $q$-potential function $\phi_q(\theta)$ is said to be a {\it subtractive} normalization (since it is defined within $\exp_q$), while for the $\lambda$-exponential family $\varphi_{\lambda}(\vartheta)$ is given as a {\it divisive} normalization. Let us call $\phi_q(\theta)$ the {\it substractive} $q$-potential, and $\varphi_{\lambda}(\vartheta)$ the {\it divisive} $\lambda$-potential.

We show that the $\lambda$-exponential family \eqref{eqn:c.exponential.family} and $q$-exponential family \eqref{eqn:q.exponential.family} are essentially equivalent up to some changes of parameters. This involves some subtleties regarding the domains of the parameters. We first start with a $\lambda$-exponential family and rewrite it as a $q$-exponential family. 

\begin{proposition} \label{prop:lambda.as.reparamterize}
Let $\mathcal{M} = \{p_{\vartheta}(x) = \exp_{q}(\vartheta \cdot F(x)) e^{-\varphi_{\lambda}(\vartheta)} : \vartheta \in \Omega\}$ be a $\lambda$-exponential family satisfying the support condition (Remark \ref{rmk:support}). On the common support, say $\mathcal{X}_0$, and with respect to the same $F$ and dominating measure $\nu$ (restricted to $\mathcal{X}_0$), let $\mathcal{N}$ be the $q$-exponential family $\mathcal{N} = \{p_{\theta}(x) = \exp_{q}(\theta \cdot F(x) - \phi_q(\theta)): \theta \in \Theta\}$, where $\Theta$ is the natural (maximal) parameter set. 

For $\vartheta \in \Omega$, let $\theta = \vartheta e^{-\lambda \varphi_{\lambda}(\vartheta)}$. Then $\theta \in \Theta$ and $p_{\vartheta}(\cdot) = p_{\theta}(\cdot)$ on $\mathcal{X}_0$. Thus $\mathcal{M}$ may be considered as a subspace of $\mathcal{N}$. If $F_1, \ldots, F_d$ are linearly independent with respect to $\nu$ on $\mathcal{X}_0$, then the mapping $\vartheta \in \Omega \mapsto \theta = \vartheta e^{-\lambda \varphi_{\lambda}(\vartheta)} \in \Theta$ is one-to-one.
\end{proposition}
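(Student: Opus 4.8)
The plan is to reduce the claim to one algebraic identity relating the two normalizations, and then to read off both assertions from it. First I would rewrite the densities using the explicit form of the deformed exponential: since $\lambda = 1-q$, formula \eqref{eqn:q.exp} reads $\exp_q(t) = (1+\lambda t)_+^{1/\lambda}$, so a member of $\mathcal{M}$ is, by \eqref{eqn:c.exponential.family}, $p_\vartheta(x) = (1+\lambda\vartheta\cdot F(x))_+^{1/\lambda}\,e^{-\varphi_\lambda(\vartheta)}$. Because $\mathcal{M}$ satisfies the support condition with common support $\mathcal{X}_0$ and $\varphi_\lambda(\vartheta)<\infty$, Lemma \ref{lem:positivity} (together with Remark \ref{rmk:support}) gives, for each fixed $\vartheta\in\Omega$, that $1+\lambda\vartheta\cdot F(x)>0$ for $\nu$-a.e.\ $x\in\mathcal{X}_0$; on $\mathcal{X}_0$ the truncation $(\cdot)_+$ is therefore inactive and real powers may be taken freely.

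The key step will be the following computation, valid on $\mathcal{X}_0$. Pulling the normalization inside the power, $p_\vartheta(x)=\big[(1+\lambda\vartheta\cdot F(x))\,e^{-\lambda\varphi_\lambda(\vartheta)}\big]^{1/\lambda}$, and expanding the bracket yields
\[
(1+\lambda\vartheta\cdot F(x))\,e^{-\lambda\varphi_\lambda(\vartheta)} = 1 + \lambda\Big(\big(\vartheta\, e^{-\lambda\varphi_\lambda(\vartheta)}\big)\cdot F(x) - \tfrac{1-e^{-\lambda\varphi_\lambda(\vartheta)}}{\lambda}\Big).
\]
Setting $\theta := \vartheta\, e^{-\lambda\varphi_\lambda(\vartheta)}$ and $c := \lambda^{-1}\big(1-e^{-\lambda\varphi_\lambda(\vartheta)}\big)$, this says $p_\vartheta(x) = \exp_q(\theta\cdot F(x) - c)$ for $\nu$-a.e.\ $x\in\mathcal{X}_0$ (and $p_\vartheta\equiv 0$ off $\mathcal{X}_0$). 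Since $p_\vartheta$ is a probability density supported on $\mathcal{X}_0$, integrating over $\mathcal{X}_0$ shows that $c$ is a normalizing constant for the parameter value $\theta$ in the $q$-exponential family \eqref{eqn:q.exponential.family}, so $\theta\in\Theta$. Moreover, for $q>0$ the map $c'\mapsto\int_{\mathcal{X}_0}\exp_q(\theta\cdot F-c')\,\mathrm d\nu$ is strictly decreasing wherever finite (its derivative equals $-\int_{\mathcal{X}_0}(\exp_q(\theta\cdot F-c'))^q\,\mathrm d\nu<0$), so this normalizing constant is unique; equivalently, $\phi_q$ is the well-defined (convex) potential of \cite[Theorem~2]{AO11}. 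Hence $c = \phi_q(\theta)$ and $p_\vartheta(\cdot) = p_\theta(\cdot)$ on $\mathcal{X}_0$, which exhibits $\mathcal{M}$ as a subset of $\mathcal{N}$.

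For injectivity I would then simply invert this relation: $c=\phi_q(\theta)$ means $e^{-\lambda\varphi_\lambda(\vartheta)} = 1-\lambda\phi_q(\theta)$, a strictly positive quantity, so $\theta = \vartheta\, e^{-\lambda\varphi_\lambda(\vartheta)}$ rearranges to $\vartheta = \theta/(1-\lambda\phi_q(\theta))$, recovering $\vartheta$ from $\theta$ alone. Alternatively — and this is where linear independence of $F_1,\dots,F_d$ enters naturally — if $\theta(\vartheta_1)=\theta(\vartheta_2)$ then $p_{\vartheta_1}=p_{\vartheta_2}$ on $\mathcal{X}_0$; raising this equality to the power $\lambda$ (both sides being positive and finite on $\mathcal{X}_0$) produces an affine identity in $F(x)$ whose coefficient of $F$ is $\lambda\big(\theta(\vartheta_1)-\theta(\vartheta_2)\big)=0$, leaving $e^{-\lambda\varphi_\lambda(\vartheta_1)}=e^{-\lambda\varphi_\lambda(\vartheta_2)}$ and hence $\vartheta_1=\vartheta_2$. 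Linear independence of the $F_i$ is what makes such comparisons of affine combinations of $F$ legitimate and guarantees that $\mathcal{N}$ is a minimal (identifiable) family, so that $\theta$ — and the embedding of $\mathcal{M}$ into $\mathcal{N}$ — is unambiguous.

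I expect the substantive work to be bookkeeping rather than analysis: controlling the $(\cdot)_+$ truncations, which is dispatched by the support condition, and, above all, verifying that the constant $c$ produced by the algebra really is the potential $\phi_q(\theta)$ of $\mathcal{N}$ rather than merely one admissible normalizing constant — which is exactly where positivity of $q$ is used, through uniqueness (equivalently convexity) of $\phi_q$. Once the substitution $\theta = \vartheta\, e^{-\lambda\varphi_\lambda(\vartheta)}$ is written down, the remaining manipulations are routine.
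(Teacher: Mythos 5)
Your proof is correct and follows essentially the same route as the paper's: the same substitution $\theta = \vartheta\, e^{-\lambda\varphi_{\lambda}(\vartheta)}$, the same absorption of the divisive normalization into a subtractive constant $c = \frac{1}{-\lambda}(e^{-\lambda\varphi_{\lambda}(\vartheta)}-1)$, and the same appeal to normalization to conclude $c=\phi_q(\theta)$ and $\theta\in\Theta$. You are in fact slightly more careful than the paper on two points: you justify $c=\phi_q(\theta)$ via strict monotonicity of $c'\mapsto\int_{\mathcal{X}_0}\exp_q(\theta\cdot F-c')\,\mathrm{d}\nu$ where the paper simply asserts $\widetilde{\phi}_q=\phi_q$, and your explicit inversion $\vartheta=\theta/(1-\lambda\phi_q(\theta))$ yields injectivity directly (even without linear independence), whereas the paper argues only via identifiability under linear independence, which is also your alternative argument.
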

\begin{proof}
Start with \eqref{eqn:c.exponential.family} and write, for $x \in \mathcal{X}_0$, 
\begin{equation} \label{eqn:lambda.to.q.transformation}
	\begin{split}
		p_{\vartheta}(x) &= \exp_{q}(\vartheta \cdot F(x)) e^{-\varphi_{\lambda}(\vartheta)} \\
		&= \left[  1 + (1 - q) \vartheta \cdot F(x) \right]_+^{1/(1 - q)} e^{-\varphi_{\lambda}(\vartheta)} \\
		&= \left[1 + (1 - q) \left( \vartheta e^{-\lambda \varphi_{\lambda}(\vartheta)} \cdot F(x) - \frac{ e^{- \lambda \varphi_{\lambda}(\vartheta)} - 1}{-\lambda}  \right) \right]_+^{\frac{1}{1 - q}}.
	\end{split}
\end{equation}
Introduce the parameter
\begin{equation} \label{eqn:vartheta.to.theta}
\theta = \vartheta e^{-\lambda \varphi_{\lambda}(\vartheta)},
\end{equation}
and define
\begin{equation} \label{eqn:potential.change}
\widetilde{\phi}_q(\theta) = \frac{1}{-\lambda} (e^{-\lambda \varphi_{\lambda}(\vartheta)} - 1).
\end{equation}
Then, we have
\begin{equation} \label{eqn:c.exp.as.q.exp}
p_{\vartheta}(x) = \exp_{q}(\vartheta \cdot F(x)) e^{-\varphi_{\lambda}(\vartheta)} = \exp_q (\theta \cdot F(x) - \widetilde{\phi}_q(\theta)), \quad x\in \mathcal{X}_0.
\end{equation}
Since $\int_{\mathcal{X}_0} \exp_q (\theta \cdot F(x) - \widetilde{\phi}_q(\theta)) \mathrm{d}\nu(x) = 1$ by construction, we have $\theta \in \Theta$, $\tilde{\phi}_q = \phi_q$ and $p_{\vartheta} = p_{\theta}$. Note that the support condition allows us to integrate on the fixed set $\mathcal{X}_0$.

Suppose that $F_1, \ldots, F_d$ are linearly independent (with respect to $\nu$ on $\mathcal{X}_0$). Then the mappings $\vartheta \in \Omega \mapsto p_{\vartheta} \in \mathcal{M}$ are $\theta \in \Theta \mapsto p_{\theta} \in \mathcal{N}$ are one-to-one, i.e., the two models are uniquely identifiable. Given distinct $\vartheta, \vartheta' \in \Omega$, the densities $p_{\vartheta}$ and $p_{\vartheta'}$ define distinct probability distributions. But $p_{\vartheta} = p_{\theta}$ and $p_{\vartheta'} = p_{\theta'}$, where $\theta = \vartheta e^{-\lambda \varphi_{\lambda}(\vartheta)}$ and $\theta' = \vartheta' e^{-\lambda \varphi_{\lambda}(\vartheta')}$. Thus $\theta$ and $\theta'$ are also distinct elements of $\Theta$. 
\end{proof}

Next we want to express a given $q$-exponential family as a $\lambda$-exponential family. Note that reversing in a direct manner the argument in \eqref{eqn:lambda.to.q.transformation} requires that $1 - \lambda \phi_q(\theta) > 0$. So, unfortunately, the converse of Proposition \ref{prop:lambda.as.reparamterize} does not hold in general. Nevertheless, we show that a reparameterization can be done {\it locally} by possibly redefining $F$. Here is a precise statement.
 
\begin{proposition} \label{prop:q.to.lambda}
Consider a $q$-exponential family $\{p_{\theta}(x) = \exp_{q}(\theta \cdot F(x) - \phi_q(\theta)): \theta \in \Theta\}$. Let $\theta_0$ be in the interior of $\Theta$. If $1 - \lambda \phi_q(\theta_0) > 0$ or $\theta_0 \neq 0$, there exists a neighborhood $U$ of $\theta_0$ such that each $p_{\theta}$ for $\theta \in U$ can be written in the form
\begin{equation} \label{eqn:q.exp.as.lambda.exp}
p_{\theta}(x) = \exp_q(\vartheta \cdot \widetilde{F}(x)) e^{-\varphi_{\lambda}(\vartheta)},
\end{equation}
where $\vartheta$ is a function of $\theta$, $\widetilde{F}(x) = F(x) - c \theta_0$ and $c \in \mathbb{R}$ is a constant.
\end{proposition}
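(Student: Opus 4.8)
The plan is to invert the transformation in \eqref{eqn:lambda.to.q.transformation}. That display shows that a $\lambda$-exponential family with statistic $F$ is automatically a $q$-exponential family with canonical parameter $\theta = \vartheta e^{-\lambda\varphi_\lambda(\vartheta)}$ and subtractive potential satisfying $e^{-\lambda\varphi_\lambda(\vartheta)} = 1 - \lambda\phi_q(\theta)$; read backwards, this relation is admissible only where $1 - \lambda\phi_q > 0$. The purpose of the re-centered statistic $\widetilde F = F - c\theta_0$ is precisely to buy this positivity near $\theta_0$: since $\theta\cdot F(x) = \theta\cdot\widetilde F(x) + c\,\theta\cdot\theta_0$, the \emph{same} family is a $q$-exponential family in the statistic $\widetilde F$ with subtractive potential $\widetilde\phi_q(\theta) = \phi_q(\theta) - c\,\theta\cdot\theta_0$, so the free scalar $c$ lets us perturb the potential by a linear function of $\theta$.

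First I would fix $c$. If $1 - \lambda\phi_q(\theta_0) > 0$, take $c = 0$, so that $\widetilde F = F$. Otherwise $\theta_0 \neq 0$, hence $|\theta_0|^2 > 0$; since $1 - \lambda\widetilde\phi_q(\theta_0) = 1 - \lambda\phi_q(\theta_0) + \lambda c|\theta_0|^2$ is affine in $c$ with nonzero slope $\lambda|\theta_0|^2$, I can choose $c$ (with the appropriate sign relative to that of $\lambda$) so that $1 - \lambda\widetilde\phi_q(\theta_0) > 0$. Because $\phi_q$ is convex on $\mathrm{int}\,\Theta$ for $q > 0$, hence continuous there, the quantity $D(\theta) := 1 - \lambda\widetilde\phi_q(\theta) = 1 - \lambda\phi_q(\theta) + \lambda c\,\theta\cdot\theta_0$ then stays strictly positive on some open neighborhood $U \subset \mathrm{int}\,\Theta$ of $\theta_0$.

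On $U$, I would put $\vartheta = \vartheta(\theta) := \theta/D(\theta)$, a positive scalar multiple of $\theta$ depending smoothly on $\theta$, together with $\varphi_\lambda(\vartheta) := -\tfrac{1}{\lambda}\log D(\theta)$, so that $e^{-\varphi_\lambda(\vartheta)} = D(\theta)^{1/\lambda}$. Since $D(\theta) > 0$, the homogeneity $(at)_+^{1/\lambda} = a^{1/\lambda}t_+^{1/\lambda}$ for $a > 0$ gives
\[
\exp_q(\vartheta\cdot\widetilde F(x))\, e^{-\varphi_\lambda(\vartheta)} = \bigl(1 + \lambda\vartheta\cdot\widetilde F(x)\bigr)_+^{1/\lambda}\, D(\theta)^{1/\lambda} = \bigl(D(\theta) + \lambda\theta\cdot\widetilde F(x)\bigr)_+^{1/\lambda},
\]
and since $D(\theta) = 1 - \lambda\widetilde\phi_q(\theta)$ while $\theta\cdot\widetilde F(x) - \widetilde\phi_q(\theta) = \theta\cdot F(x) - \phi_q(\theta)$, the right-hand side equals $\exp_q(\theta\cdot F(x) - \phi_q(\theta)) = p_\theta(x)$, identically in $x$ (the $(\cdot)_+$ convention keeps this valid even where the base is non-positive, so the supports automatically agree). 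Integrating in $x$ and using $\int p_\theta\,d\nu = 1$ then confirms that the $\varphi_\lambda(\vartheta)$ just defined equals $\log\int\exp_q(\vartheta\cdot\widetilde F)\,d\nu$, i.e.\ it is a genuine divisive $\lambda$-potential in the sense of Definition \ref{def:lambda.exp.family}, finite because $D(\theta) > 0$. This establishes \eqref{eqn:q.exp.as.lambda.exp}.

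The step demanding the most care is conceptual, not computational: recognizing why re-centering $F$ by a multiple of $\theta_0$ is the right maneuver and why the dichotomy ``$1 - \lambda\phi_q(\theta_0) > 0$ or $\theta_0 \neq 0$'' is exactly what is needed --- the $c$-dependent correction to the potential, evaluated at $\theta_0$, equals $\lambda c|\theta_0|^2$, which can be driven past the threshold $\lambda\phi_q(\theta_0) - 1$ if and only if $\theta_0 \neq 0$ (when $1 - \lambda\phi_q(\theta_0) > 0$ no correction is needed). The rest --- tracking the $(\cdot)_+$ conventions so that the supports match, checking that $U$ can be taken inside $\mathrm{int}\,\Theta$, and verifying that the constructed $\varphi_\lambda$ agrees with the normalization integral --- is routine. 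Note also that, unlike Proposition \ref{prop:lambda.as.reparamterize}, no support condition (Remark \ref{rmk:support}) is needed here, since the reference measure $\nu$ is never altered.
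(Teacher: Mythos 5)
Your proposal is correct and follows essentially the same route as the paper: invert the computation of \eqref{eqn:lambda.to.q.transformation}, taking $c=0$ when $1-\lambda\phi_q(\theta_0)>0$, and otherwise using the recentred statistic $\widetilde F = F - c\theta_0$ to add the linear correction $\lambda c\,\theta\cdot\theta_0$ to $1-\lambda\phi_q(\theta)$, choosing the sign of $c$ relative to $\lambda$ so that this quantity is positive at $\theta_0$ and hence, by continuity of $\phi_q$, on a neighborhood $U$. Your framing of the recentring as a linear shift of the subtractive potential is a clean way to see why the dichotomy in the hypothesis is exactly what is needed, but the underlying argument is the one in the paper.
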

\begin{proof}
We first consider the case that $1 - \lambda \phi_q(\theta_0) > 0$. Then there exists a neighborhood $U$ such that $1 - \lambda \phi_q(\theta) > 0$ for $\theta \in U$. Reversing the computation in \eqref{eqn:lambda.to.q.transformation}, we have, for $\theta \in U$,
\begin{equation*}
\begin{split}
	p_{\theta}(x) &= \left[ 1 + \lambda (\theta \cdot F(x) - \phi_q(\theta)) \right]_+^{\frac{1}{1 - q}} \\
	&= (1 - \lambda \phi_q(\theta))^{\frac{1}{\lambda}} \left[ 1 + \lambda \frac{\theta}{1 - \lambda \phi_q(\theta)} \cdot F(x) \right]_+^{\frac{1}{\lambda}}.
\end{split}
\end{equation*}
Writing
\begin{equation} \label{eqn:theta.to.vartheta}
\vartheta =  \frac{\theta}{1 - \lambda \phi_q(\theta)} \quad \text{and} \quad \varphi_{\lambda}(\vartheta) = \frac{1}{-\lambda} \log (1 - \lambda \phi_q(\theta)),
\end{equation}
we have $p_{\theta}(x) = \exp_q(\vartheta \cdot F(x)) e^{-\varphi_{\lambda}(\vartheta)}$ which is in the form of a $\lambda$-exponential family (where we may pick $c = 0$).

Next suppose $1 - \lambda \phi_q(\theta_0)  \leq 0$ and $\theta_0 \neq 0$. Define $\widetilde{F}(x) = F(x) - c\theta_0$, where $c \in \mathbb{R}$ is a constant to be chosen. Fix $\epsilon > 0$. Let $U$ be a neighborhood of $\theta_0$ on which $\theta \cdot \theta_0 > \frac{1}{2} |\theta_0|^2$ and $1 - \lambda \phi(\theta) > 1 - \lambda \phi_q(\theta_0) - \varepsilon$. For $\theta \in U$, write
\begin{equation*}
\begin{split}
1 - \lambda \phi_q(\theta) + \lambda \theta \cdot F(x) &= 1 - \lambda \phi(\theta) + \lambda c \theta \cdot \theta_0 + \lambda \theta \cdot \widetilde{F}(x).
\end{split}
\end{equation*}
Choose $c \in \mathbb{R}$ such that $c\lambda > 0$ and $1 - \lambda \phi(\theta_0) - \epsilon + c \frac{\lambda}{2} |\theta_0|^2 > 0$ (this requires $\theta_0 \neq 0$). Then, for $\theta \in U$, we have
\[
1 - \lambda \phi(\theta) + \lambda c \theta \cdot \theta_0 > 0.
\]
Now we may write
\begin{equation*}
\begin{split}
p_{\theta}(x) &= \left( 1 - \lambda \phi(\theta) + \lambda c \theta \cdot \theta_0 \right)^{\frac{1}{\lambda}} \left[ 1 + \lambda \frac{\theta}{1 - \lambda \phi(\theta) + \lambda c \theta \cdot \theta_0} \cdot \widetilde{F}(x)  \right]_+^{\frac{1}{\lambda}},
\end{split}
\end{equation*}
which has the form \eqref{eqn:q.exp.as.lambda.exp} if we let
\[
\vartheta = \frac{\theta}{1 - \lambda \phi(\theta) + \lambda c \theta \cdot \theta_0} \quad \text{and} \quad \varphi_{\lambda}(\vartheta) = \frac{1}{-\lambda} \log (1 - \lambda \phi(\theta) + \lambda c \theta \cdot \theta_0).
\]
\end{proof}

Roughly speaking, the above results say that when studying local properties of the family it does not matter whether we use the subtractive ($q$-exponential family) or divisive ($\lambda$-exponential family) formulations.

	\subsection{Linking to R\'{e}nyi entropy and R\'{e}nyi divergence} \label{sec:connetions.with.Renyi}
	While divisive normalizations of the $q$-exponential family had been considered before (see for example (7.13) in \cite[Section 7.3]{N11}; a similar generalized exponential family is considered in \cite{KM20}), its theoretical significance was not recognized because it was not paired with the $\lambda$-duality. Here, we show that the $\lambda$-duality, made possible by the following result, offers fresh insights into the family and leads naturally to the R\'{e}nyi entropy and divergence.

\begin{theorem} \label{thm:c.exp.potential}
Consider a $\lambda$-exponential family satisfying Assumption \ref{ass:lambda.family}. Let $\varphi_{\lambda}$ be the divisive $\lambda$-potential. Then the function $\frac{1}{\lambda} (e^{\lambda \varphi_{\lambda}(\vartheta)} - 1)$ is convex on $\Omega$. Moreover, we have that $1 - \lambda \nabla \varphi(\vartheta) \cdot \vartheta  > 0$.
	\end{theorem}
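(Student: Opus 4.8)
The plan is to establish the two claims by essentially independent arguments. For the convexity of $F_{\lambda}:=\tfrac{1}{\lambda}\big(e^{\lambda\varphi_{\lambda}}-1\big)$ I would use a ``soft'' argument based on (reverse) Minkowski inequalities, and for the strict inequality $1-\lambda\nabla\varphi_{\lambda}(\vartheta)\cdot\vartheta>0$ a one-line computation after differentiating the normalization under the integral sign. Throughout I set $Z(\vartheta):=e^{\varphi_{\lambda}(\vartheta)}=\int e^{-c_{\lambda}(\vartheta,F(x))}\,\mathrm{d}\nu(x)=\int\exp_q(\vartheta\cdot F(x))\,\mathrm{d}\nu(x)$ by \eqref{eqn:c.exp.normalization} and \eqref{eqn:exp.c}, and I invoke Assumption \ref{ass:lambda.family} where needed.

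The starting point for the first claim is the identity
\[
e^{\lambda\varphi_{\lambda}(\vartheta)}=Z(\vartheta)^{\lambda}=\left(\int\big(1+\lambda\vartheta\cdot F(x)\big)_+^{1/\lambda}\,\mathrm{d}\nu(x)\right)^{\lambda}=\big\|\,(1+\lambda\vartheta\cdot F)_+\,\big\|_{L^{1/\lambda}(\nu)},
\]
expressing $e^{\lambda\varphi_{\lambda}}$ as the $L^p$ ``norm'', $p=1/\lambda$, of the map $x\mapsto 1+\lambda\vartheta\cdot F(x)$, which is affine in $\vartheta$. If $0<\lambda<1$, so $p\ge 1$, then $a\mapsto a_+$ is convex, hence $\vartheta\mapsto(1+\lambda\vartheta\cdot F(x))_+$ is convex and non-negative; composing with the monotone convex functional $\|\cdot\|_{L^{1/\lambda}(\nu)}$ shows $\vartheta\mapsto e^{\lambda\varphi_{\lambda}(\vartheta)}$ is convex, and multiplying by $\tfrac{1}{\lambda}>0$ gives convexity of $F_{\lambda}$. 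If $\lambda<0$, then for every $\vartheta\in\Omega$ one has $1+\lambda\vartheta\cdot F>0$ $\nu$-a.e.\ (otherwise $\exp_q(\vartheta\cdot F)=+\infty$ on a set of positive $\nu$-measure, contradicting $\varphi_{\lambda}(\vartheta)<\infty$), so the truncation is vacuous and $\vartheta\mapsto 1+\lambda\vartheta\cdot F$ is genuinely affine; here $p=1/\lambda<0$ and the reverse Minkowski inequality makes $\|\cdot\|_{L^{1/\lambda}(\nu)}$ concave and positively homogeneous on positive functions, so $\vartheta\mapsto e^{\lambda\varphi_{\lambda}(\vartheta)}$ is concave, and multiplying by $\tfrac{1}{\lambda}<0$ makes $F_{\lambda}$ convex again. (An alternative is to check the second-order condition \eqref{eqn:second.derivative.condition} directly: two differentiations under the integral sign, together with $\tfrac{d}{dt}\exp_q(t)=\exp_q(t)^q$, reduce it to the Cauchy--Schwarz inequality applied to $\exp_q(\vartheta\cdot F)^q F_i=\exp_q(\vartheta\cdot F)^{1/2}\cdot\exp_q(\vartheta\cdot F)^{q-1/2}F_i$, which is legitimate because $q>0$.)

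For the second claim I would differentiate $Z$ once under the integral sign, using $\tfrac{d}{dt}\exp_q(t)=\exp_q(t)^q$ on the support $\{1+\lambda\vartheta\cdot F>0\}$, to obtain $\nabla\varphi_{\lambda}(\vartheta)=\tfrac{1}{Z}\int\exp_q(\vartheta\cdot F(x))^q F(x)\,\mathrm{d}\nu(x)$. Contracting with $\vartheta$ and using the elementary pointwise identity $\exp_q(t)=(1+\lambda t)\exp_q(t)^q$,
\[
1-\lambda\nabla\varphi_{\lambda}(\vartheta)\cdot\vartheta=\frac{1}{Z}\int\Big(\exp_q(\vartheta\cdot F)-\lambda(\vartheta\cdot F)\exp_q(\vartheta\cdot F)^q\Big)\,\mathrm{d}\nu=\frac{1}{Z}\int\exp_q(\vartheta\cdot F)^q\,\mathrm{d}\nu.
\]
The last expression equals $e^{-\lambda\varphi_{\lambda}(\vartheta)}\int p_{\vartheta}^q\,\mathrm{d}\nu$, which ties it to the escort normalization; it is finite because $Z$ and $\nabla\varphi_{\lambda}(\vartheta)$ are, and strictly positive because $\exp_q(\vartheta\cdot F)^q>0$ on a set of positive $\nu$-measure.

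The main difficulty lies not in these computations but in the analytic bookkeeping around the truncation $(\cdot)_+$: one must be sure that differentiation under the integral sign is applied only where $\exp_q$ is smooth (the possibly $\vartheta$-dependent support), that all the integrals that appear are finite, and --- in the case $\lambda<0$ --- that $1+\lambda\vartheta\cdot F>0$ holds $\nu$-a.e.\ on $\Omega$. These are precisely the points that Assumption \ref{ass:lambda.family} and the support condition (Remark \ref{rmk:support}) are designed to absorb, and I would run the argument under those hypotheses, flagging where each is used.
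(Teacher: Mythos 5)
Your proposal is correct. For the second claim ($1-\lambda\nabla\varphi_{\lambda}(\vartheta)\cdot\vartheta>0$) your computation is the same as the paper's: the paper differentiates $e^{\varphi_{\lambda}(\vartheta)}=\int[1+\lambda\vartheta\cdot F]_+^{1/\lambda}\mathrm{d}\nu$ under the integral sign and arrives at $1-\lambda\nabla\varphi_{\lambda}(\vartheta)\cdot\vartheta=\int p_{\vartheta}\,(1+\lambda\vartheta\cdot F)^{-1}\mathrm{d}\nu>0$, which is exactly your expression $\frac{1}{Z}\int\exp_q(\vartheta\cdot F)^q\,\mathrm{d}\nu$ rewritten via $p_{\vartheta}=\exp_q(\vartheta\cdot F)/Z$ and $\exp_q(t)^q=\exp_q(t)/(1+\lambda t)$. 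For the first claim the paper gives no in-text argument at all --- it defers to Propositions 2 and 3 of \cite{W18}, stated there for the $\mathcal{F}^{(\pm\alpha)}$-families --- whereas you supply a self-contained proof. Your Minkowski/reverse-Minkowski route is sound: for $0<\lambda<1$ the composition of the monotone convex functional $\|\cdot\|_{L^{1/\lambda}(\nu)}$ with the pointwise convex, nonnegative map $\vartheta\mapsto(1+\lambda\vartheta\cdot F(x))_+$ gives convexity of $e^{\lambda\varphi_{\lambda}}$, and for $\lambda<0$ your observation that $1+\lambda\vartheta\cdot F>0$ $\nu$-a.e.\ (else the integrand is $+\infty$ on a set of positive measure) legitimizes the reverse Minkowski inequality for the negative exponent $p=1/\lambda$ and yields concavity of $e^{\lambda\varphi_{\lambda}}$, hence convexity of $F_{\lambda}$ after multiplying by $1/\lambda<0$. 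This is essentially the mechanism used in the cited reference, so the content is the same, but having it spelled out here (together with your Cauchy--Schwarz verification of the second-order condition \eqref{eqn:second.derivative.condition} as a cross-check) is a genuine improvement in self-containedness over the paper's citation-only treatment. Your closing caveats about where Assumption \ref{ass:lambda.family} is invoked (differentiation under the integral, finiteness of the integrals) are exactly the points the paper also leaves to that assumption; note only that the convexity argument itself does not need the support condition of Remark \ref{rmk:support}, since the $(\cdot)_+$ truncation is handled pointwise, so you should not list it as a hypothesis for this theorem.
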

	\begin{proof}
		The proof of the first statement can be found in Propositions 2 and 3 of \cite{W18} where the results are stated in terms of the $\mathcal{F}^{(\pm \alpha)}$-families. To prove the second statement, consider
		\[
		e^{\varphi(\vartheta)} = \int [1 + \lambda \vartheta \cdot F]_+^{1/\lambda} \mathrm{d}\nu.
		\]
		Differentiating under the integral sign, which is possible by assumption, we have
		\begin{equation*}
			\begin{split}
				& e^{\varphi(\vartheta)} \nabla \varphi(\vartheta) = \int [1 + \lambda \vartheta \cdot F]_+^{1/\lambda - 1} F \mathrm{d}\nu \\
				& \Rightarrow \nabla \varphi(\vartheta) = \int p(x; \theta) \frac{F}{1 + \lambda \vartheta \cdot F} \mathrm{d}\nu,
			\end{split}
		\end{equation*}
		where the second line follows from Lemma \ref{lem:positivity}. We get
		\begin{equation*}
			\begin{split}
				1 - \lambda \nabla \varphi(\vartheta) \cdot \vartheta &= \int p(x; \theta) \left( 1 - \frac{\lambda \vartheta \cdot F}{1 + \lambda \vartheta \cdot F} \right) \mathrm{d}\nu \\
				&= \int p(x ; \theta) \frac{1}{1 + \lambda \vartheta \cdot F} \mathrm{d}\nu \\
				&= \int p(x ; \theta) \frac{1}{[1 + \lambda \vartheta \cdot F]_+} \mathrm{d}\nu > 0.
			\end{split}
		\end{equation*}
	\end{proof}
	
\begin{condition} \label{con:technical.conditions}
In the remainder of Section \ref{sec:lambda.exp.family} we assume that Assumption \ref{ass:lambda.family} holds and that the Hessian of $\frac{1}{\lambda} (e^{\lambda \varphi_{\lambda}(\vartheta)} - 1)$ is strictly positive definite. This implies that $\varphi_{\lambda}$ is a regular $c_{\lambda}$-convex function and Theorem \ref{thm:lambda.duality} applies. We also assume that the support condition (see Remark \ref{rmk:support}) holds and that $(F_1, \ldots, F_d)$ are linearly independent.
\end{condition}
	
	By Proposition \ref{prop:lambda.as.reparamterize}, there are two potential functions, namely $\phi_q(\theta)$ and $\varphi_{\lambda}(\vartheta)$, associated to the density. The two potential functions and their respective dualities define apparently two dual variables, namely
	\[
	\eta_{\mathrm{subtractive}} = \nabla_{\theta} \phi_q(\theta) \text{ and } \eta_{\mathrm{divisive}} = \nabla_{\vartheta}^{c_{\lambda}} \varphi_{\lambda}(\vartheta).
	\]
	For clarify, we sometimes use $\nabla_u$ to denote the gradient with respect to the variable $u$. We show that the two dual variables are actually the same. 
	
\begin{lemma}
The mapping $\vartheta \in \Omega \mapsto \theta = \vartheta e^{-\lambda \varphi_{\lambda}(\vartheta)}$ is a diffeomorphism from $\Omega$ onto its range.
\end{lemma}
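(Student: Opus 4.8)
The plan is to combine the injectivity already established in Proposition~\ref{prop:lambda.as.reparamterize} with a Jacobian computation and the inverse function theorem. Write $T(\vartheta) = \vartheta\, e^{-\lambda \varphi_{\lambda}(\vartheta)}$ for the map in question. Under Condition~\ref{con:technical.conditions} the potential $\varphi_{\lambda}$ is smooth on $\Omega$ (we may differentiate under the integral sign), hence $T$ is smooth; it remains to show $T$ is a bijection onto an open set with smooth inverse.

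First I would compute the differential of $T$. Writing $\theta_i = \vartheta_i\, e^{-\lambda \varphi_{\lambda}(\vartheta)}$ and differentiating gives
\[
\frac{\partial \theta_i}{\partial \vartheta_j} = e^{-\lambda \varphi_{\lambda}(\vartheta)} \left( \delta_{ij} - \lambda \vartheta_i\, \partial_j \varphi_{\lambda}(\vartheta) \right),
\]
that is, $DT(\vartheta) = e^{-\lambda \varphi_{\lambda}(\vartheta)}\bigl( I - \lambda\, \vartheta\, (\nabla \varphi_{\lambda}(\vartheta))^{\top} \bigr)$, a rank-one perturbation of a scalar multiple of the identity. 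By the matrix determinant lemma,
\[
\det DT(\vartheta) = e^{-d \lambda \varphi_{\lambda}(\vartheta)} \bigl( 1 - \lambda\, \nabla \varphi_{\lambda}(\vartheta) \cdot \vartheta \bigr).
\]
By Theorem~\ref{thm:c.exp.potential} we have $1 - \lambda\, \nabla \varphi_{\lambda}(\vartheta) \cdot \vartheta > 0$ on $\Omega$, so $\det DT(\vartheta) > 0$ everywhere; hence $T$ is a local diffeomorphism and, in particular, an open map whose image is open.

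Finally, Proposition~\ref{prop:lambda.as.reparamterize} applies since the support condition holds and $F_1,\ldots,F_d$ are linearly independent (Condition~\ref{con:technical.conditions}), and it shows that $T$ is injective. An injective local diffeomorphism is a diffeomorphism onto its image, which is open; this proves the claim. There is no real obstacle here: the only mildly delicate steps are the rank-one determinant evaluation and the appeal to the positivity $1 - \lambda\, \nabla \varphi_{\lambda}\cdot\vartheta > 0$ from Theorem~\ref{thm:c.exp.potential} (which in turn relied on Lemma~\ref{lem:positivity}); everything else is routine.
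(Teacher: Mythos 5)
Your proposal is correct and follows essentially the same route as the paper: compute the Jacobian $e^{-\lambda \varphi_{\lambda}(\vartheta)}\bigl(I - \lambda\,\vartheta (\nabla \varphi_{\lambda}(\vartheta))^{\top}\bigr)$, use the positivity $1 - \lambda \nabla \varphi_{\lambda}(\vartheta)\cdot\vartheta > 0$ from Theorem~\ref{thm:c.exp.potential} to conclude invertibility, invoke the inverse function theorem, and use Proposition~\ref{prop:lambda.as.reparamterize} for injectivity. The only cosmetic difference is that you certify invertibility via the matrix determinant lemma while the paper writes down the explicit inverse via the Sherman--Morrison formula.
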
	
\begin{proof}
Note that Condition \ref{con:technical.conditions} is in force. Clearly the mapping $\vartheta \mapsto \theta = \vartheta e^{-\lambda \varphi_{\lambda}(\vartheta)}$ is differentiable. By Proposition \ref{prop:lambda.as.reparamterize}, it is also one-to-one. Recall that the gradient is regarded as a column vector. By a direct differentiation, we see that the Jacobian is given by
\[
\frac{\partial \theta}{\partial \vartheta}(\vartheta) = e^{-\lambda \varphi_{\lambda}(\theta)} \left( {\bf I}_d - \lambda \vartheta (\nabla_{\vartheta} \varphi_{\lambda}(\vartheta))^{\top}\right),
\]
where ${\bf I}_d$ is the identity matrix. Since $1 - \lambda \nabla_{\vartheta} \varphi(\vartheta) \cdot \vartheta > 0$ by assumption, by the Shermon-Morrison formula we can invert the Jacobian is invertible:
\begin{equation} \label{eqn:Jacobian.inverse}
\left( \frac{\partial \theta}{\partial \vartheta}(\vartheta) \right)^{-1} = e^{\lambda \varphi_{\lambda}(\vartheta)} \left( {\bf I}_d + \frac{\lambda \vartheta (\nabla_{\vartheta} \varphi_{\lambda}(\vartheta))^{\top}}{1 - \lambda \nabla_{\vartheta} \varphi_{\lambda}(\vartheta) \cdot \vartheta} \right).
\end{equation}
By the inverse function theorem, the mapping $\vartheta \mapsto \theta$ is a diffeomorphism.
\end{proof}

\begin{theorem} \label{thm:c.exp.dual.variable}
We have
\begin{equation} \label{eqn:c.exp.dual.variable}
\nabla_{\vartheta}^{c_{\lambda}} \varphi_{\lambda}(\vartheta) = \nabla_{\theta} \phi_q(\theta) =: \eta.
\end{equation}	
Thus, under both the subtractive and divisive frameworks, the dual variable $\eta$ is the escort expectation \eqref{eqn:escort}.
\end{theorem}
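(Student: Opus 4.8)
The plan is to verify \eqref{eqn:c.exp.dual.variable} by computing both sides explicitly and matching them against the escort expectation \eqref{eqn:escort}. Since $p_{\vartheta} = p_{\theta}$ as densities (Proposition \ref{prop:lambda.as.reparamterize}), their escort transforms with exponent $q$ coincide, so it suffices to (a) show that $\nabla_{\vartheta}^{c_{\lambda}} \varphi_{\lambda}(\vartheta)$ equals $\widetilde{\mathbb{E}}_{\vartheta}[F(X)] = \int F \, \widetilde{p}_{\vartheta} \, \mathrm{d}\nu$, and (b) give a direct verification that $\nabla_{\theta} \phi_q(\theta) = \nabla_{\vartheta}^{c_{\lambda}} \varphi_{\lambda}(\vartheta)$. (Alternatively, (b) may be replaced by the classical fact, recalled in Section \ref{sec:classic.deformation}, that $\nabla_{\theta} \phi_q(\theta)$ is the escort expectation.)

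For (a), use the representation \eqref{eqn:lambda.exp.simplified} on the (fixed) support together with $q/\lambda = 1/\lambda - 1$ to write $p_{\vartheta}^q = (1 + \lambda \vartheta \cdot F)_+^{1/\lambda - 1} e^{-q \varphi_{\lambda}(\vartheta)} = e^{\lambda \varphi_{\lambda}(\vartheta)} \, p_{\vartheta}/(1 + \lambda \vartheta \cdot F)$; after cancelling the common factor $e^{\lambda \varphi_{\lambda}(\vartheta)}$ this gives
\[
\widetilde{p}_{\vartheta} = \mathcal{E}_q[p_{\vartheta}] = \frac{p_{\vartheta}/(1 + \lambda \vartheta \cdot F)}{\int p_{\vartheta}/(1 + \lambda \vartheta \cdot F) \, \mathrm{d}\nu}.
\]
The numerator integrated against $F$ and the denominator were both computed inside the proof of Theorem \ref{thm:c.exp.potential}: they equal $\nabla \varphi_{\lambda}(\vartheta)$ and $1 - \lambda \nabla \varphi_{\lambda}(\vartheta) \cdot \vartheta$ respectively. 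Hence $\int F \, \widetilde{p}_{\vartheta} \, \mathrm{d}\nu = (1 - \lambda \nabla \varphi_{\lambda}(\vartheta) \cdot \vartheta)^{-1} \nabla \varphi_{\lambda}(\vartheta) = \nabla_{\vartheta}^{c_{\lambda}} \varphi_{\lambda}(\vartheta)$ by the definition \eqref{eqn:c.Legendre.transform} of the $\lambda$-gradient.

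For (b), recall from \eqref{eqn:vartheta.to.theta}, \eqref{eqn:potential.change} and Proposition \ref{prop:lambda.as.reparamterize} that $\phi_q(\theta) = \frac{1}{-\lambda}(e^{-\lambda \varphi_{\lambda}(\vartheta)} - 1)$ with $\vartheta$ a smooth function of $\theta$ (the preceding lemma), whose $\vartheta$-gradient is $e^{-\lambda \varphi_{\lambda}(\vartheta)} \nabla_{\vartheta} \varphi_{\lambda}(\vartheta)$; multiplying by the transpose of the inverse Jacobian \eqref{eqn:Jacobian.inverse} and cancelling $e^{\pm \lambda \varphi_{\lambda}(\vartheta)}$ yields
\[
\nabla_{\theta} \phi_q(\theta) = \left( {\bf I}_d + \frac{\lambda \, \nabla_{\vartheta} \varphi_{\lambda}(\vartheta) \, \vartheta^{\top}}{1 - \lambda \nabla_{\vartheta} \varphi_{\lambda}(\vartheta) \cdot \vartheta} \right) \nabla_{\vartheta} \varphi_{\lambda}(\vartheta) = \frac{\nabla_{\vartheta} \varphi_{\lambda}(\vartheta)}{1 - \lambda \nabla_{\vartheta} \varphi_{\lambda}(\vartheta) \cdot \vartheta} = \nabla_{\vartheta}^{c_{\lambda}} \varphi_{\lambda}(\vartheta),
\]
using $\nabla_{\vartheta} \varphi_{\lambda} \, \vartheta^{\top} \nabla_{\vartheta} \varphi_{\lambda} = (\vartheta \cdot \nabla_{\vartheta} \varphi_{\lambda}) \nabla_{\vartheta} \varphi_{\lambda}$. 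Together, (a) and (b) give \eqref{eqn:c.exp.dual.variable} and the escort interpretation. I do not expect a genuine obstacle here: both computations are short and use only results already established --- the integral formulas from the proof of Theorem \ref{thm:c.exp.potential}, the inverse-Jacobian formula \eqref{eqn:Jacobian.inverse}, and the definition of the $\lambda$-gradient. The only points needing care are routine: the positive parts $[\,\cdot\,]_+$ may be dropped since $1 + \lambda \vartheta \cdot F(X) > 0$ holds $\mathbb{P}_{\vartheta}$-almost surely by Lemma \ref{lem:positivity}, the support condition licenses integration over a fixed set, and one must mind the column/row-vector convention when transposing the Jacobian.
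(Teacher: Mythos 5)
Your proposal is correct. Part (b) is essentially the paper's own proof: the chain rule applied to $\phi_q(\theta) = \frac{1}{-\lambda}(e^{-\lambda \varphi_{\lambda}(\vartheta)} - 1)$, combined with the inverse Jacobian \eqref{eqn:Jacobian.inverse} and the Sherman--Morrison-type cancellation, is exactly the computation in the text (written there in row-vector form, but equivalent). Part (a) is a small but genuine addition: the paper asserts the escort-expectation interpretation of $\eta$ by appealing to the classical fact \eqref{eqn:escort} for $\nabla_{\theta}\phi_q$, whereas you verify directly that $\nabla_{\vartheta}^{c_{\lambda}}\varphi_{\lambda}(\vartheta) = \widetilde{\mathbb{E}}_{\vartheta}[F(X)]$ by reusing the two integral identities from the proof of Theorem \ref{thm:c.exp.potential}; this makes the escort claim self-contained and, as you note, would even let one deduce $\nabla_{\theta}\phi_q(\theta) = \nabla_{\vartheta}^{c_{\lambda}}\varphi_{\lambda}(\vartheta)$ without the Jacobian computation at all, by citing the known escort formula for the subtractive potential.
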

\begin{proof}  
Recall that the gradient is regarded as a column vector. By the lemma above, $\theta$ is a function of $\vartheta$. Applying the chain rule to \eqref{eqn:potential.change}, we have
\[
(\nabla_{\theta} \phi_q(\theta))^{\top} =  e^{-\lambda \varphi_{\lambda}(\vartheta)} (\nabla_{\vartheta} \varphi_{\lambda}(\vartheta) )^{\top} \frac{\partial \vartheta}{\partial \theta}(\theta).
\]
Note that $\frac{\partial \vartheta}{\partial \theta}(\theta)$ is given by \eqref{eqn:Jacobian.inverse}. Plugging this into the above and using \eqref{eqn:c.Legendre.transform}, we compute
\begin{equation*}
\begin{split}
(\nabla_{\theta} \phi_q(\theta))^{\top} 
&= (\nabla_{\vartheta}\varphi_{\lambda}(\vartheta) )^{\top}  \left( {\bf I}_d + \frac{\lambda \vartheta (\nabla_{\vartheta} \varphi_{\lambda}(\vartheta))^{\top}}{1 - \lambda \nabla_{\vartheta} \varphi_{\lambda}(\vartheta) \cdot \vartheta} \right) \\
&= (\nabla_{\vartheta}\varphi_{\lambda}(\vartheta) )^{\top} \left( 1 + \frac{\lambda \nabla_{\vartheta} \varphi(\vartheta) \cdot \vartheta}{1 - \lambda \nabla_{\vartheta} \varphi_{\lambda}(\vartheta) \cdot \vartheta} \right) \\
&= \frac{(\nabla_{\vartheta}\varphi_{\lambda}(\vartheta) )^{\top}}{1 - \lambda \nabla_{\vartheta} \varphi_{\lambda}(\vartheta) \cdot \vartheta} \\
&= (\nabla_{\vartheta}^{c_{\lambda}} \varphi_{\lambda}(\vartheta))^{\top}.
\end{split}
\end{equation*}
Thus $\nabla_{\vartheta}^{c_{\lambda}} \varphi_{\lambda}(\vartheta) = \nabla_{\theta} \phi_q(\theta)$ and the theorem is proved.
\end{proof}

	From \eqref{eqn:escort}, the dual variable $\eta$ is the expected value of $F(X)$ under the escort distribution $\widetilde{p}_{\vartheta}$ (with exponent $q$). In Theorem \ref{thm:dual.as.barycenter} below, we give under suitable conditions a new probabilistic interpretation of $\eta$ in terms of the {\it original} distribution $p_{\vartheta}$ without undergoing the escort transformation. Note that although the dual parameter $\eta$ remains the same, the primal variables $\theta$ (subtractive case) and $\vartheta$ (divisive case) are different and are related by \eqref{eqn:vartheta.to.theta}.
	
	\begin{remark}[Geometric meanings of the coordinate systems $\vartheta$ and $\eta$] \label{rmk:primal.dual.coordinates}
		It is helpful to think of the primal coordinate system $\vartheta$ of a $\lambda$-exponential family as a projective affine coordinate system, in the sense that a $\vartheta$-straight line is the trajectory of a primal geodesic (up to a time reparameterization). Similarly, the dual coordinate system is the dual projective affine coordinate system. These notions are justified by the information geometry of $\lambda$-logarithmic divergence described in Section \ref{sec:information.geometry}.
	\end{remark}
	
	As explained in Section \ref{sec:Legendre.duality}, under the classical Legendre duality, the exponential family leads naturally to the Shannon entropy ${\bf H}(\cdot)$ and the KL-divergence ${\bf H}( \cdot || \cdot)$. For a $\lambda$-exponentialy family under the $\lambda$-duality, the natural objects are the R\'{e}nyi entropy and R\'{e}nyi divergence. The R\'{e}nyi entropy was defined in \eqref{eqn:Renyi.entropy}. Recall that the {\it R\'{e}nyi divergence} of order $q > 0$ between probability densities $p_1, p_2$ with respect to $\nu$ is defined by 
	\begin{equation} \label{eqn:Renyi.divergence}
		\begin{split}
			{\bf H}^{\mathrm{R\acute{e}nyi}}_q(p_1 || p_2) &= \frac{1}{q - 1} \log \int p_1(x)^{q} p_2(x)^{1 - q} \mathrm{d}\nu(x).
		\end{split}
	\end{equation}
	See \cite{VH14} for a useful overview of the properties of the R\'{e}nyi entropy and divergence. 
	
	The following theorem illustrates the theoretical elegance of the $\lambda$-duality. Here, the Shannon entropy (the negative conjugate function for the exponential family) is replaced by the R\'{e}nyi entropy, and the KL-divergence (Bregman divergence of the potential function) is replaced by the R\'{e}nyi entropy. Recall that Condition \ref{con:technical.conditions} is imposed.

	\begin{theorem} \label{thm:Renyi.divergence}
		The $\lambda$-logarithmic divergence of the divisive $\lambda$-potential $\varphi_{\lambda}$ is the R\'{e}nyi divergence:
		\begin{equation} \label{eqn:L.div.as.Renyi}
			{\bf L}_{\lambda, \varphi_{\lambda}}[ \vartheta : \vartheta'] = {\bf H}^{\mathrm{R\acute{e}nyi}}_q( p_{\vartheta'} || p_{\vartheta} ).
		\end{equation}
		Moreover, the $\lambda$-conjugate of $\varphi_{\lambda}$ is given on the range of $\nabla^{c_{\lambda}} \varphi_{\lambda}$ by the negative R\'{e}nyi entropy:
		\begin{equation} \label{eqn:conjugate.as.Renyi}
			\psi_{\lambda}(\eta) = - {\bf H}^{\mathrm{R\acute{e}nyi}}_q( p_{\vartheta}), \quad \eta = \nabla^{c_{\lambda}} \varphi_{\lambda}(\vartheta).
		\end{equation}
	\end{theorem}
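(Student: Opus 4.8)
The plan is to prove the two identities by direct computation, using the explicit formula \eqref{eqn:c.exp.exponential.family} for the density together with the $\lambda$-duality identity \eqref{eqn:lambda.Fenchel.conjugate} from Theorem \ref{thm:lambda.duality}(iv). First I would establish \eqref{eqn:L.div.as.Renyi}. Starting from Definition \ref{def:L.div}, expand ${\bf L}_{\lambda,\varphi_{\lambda}}[\vartheta:\vartheta']$ as $\varphi_{\lambda}(\vartheta) - \varphi_{\lambda}(\vartheta') - \frac{1}{\lambda}\log(1 + \lambda \nabla\varphi_{\lambda}(\vartheta')\cdot(\vartheta - \vartheta'))$. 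The goal is to match this with $\frac{1}{q-1}\log\int p_{\vartheta'}^q p_{\vartheta}^{1-q}\,\mathrm{d}\nu$. Using \eqref{eqn:lambda.exp.simplified} (valid $\mathbb{P}_{\vartheta'}$-a.s.\ by Lemma \ref{lem:positivity}, and the support condition lets us integrate over the fixed common support), we have $p_{\vartheta'}^q p_{\vartheta}^{1-q} = (1+\lambda\vartheta'\cdot F)^{q/\lambda}(1+\lambda\vartheta\cdot F)^{(1-q)/\lambda} e^{-q\varphi_{\lambda}(\vartheta')}e^{-(1-q)\varphi_{\lambda}(\vartheta)}$; since $1-q = \lambda$, the exponents on the $(1+\lambda\vartheta'\cdot F)$ factor and the $e^{-\varphi_{\lambda}(\vartheta)}$ factor simplify. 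The key calculation is then to recognize the resulting integral $\int (1+\lambda\vartheta'\cdot F)^{1/\lambda}\frac{1+\lambda\vartheta\cdot F}{1+\lambda\vartheta'\cdot F}\,\mathrm{d}\nu = e^{\varphi_{\lambda}(\vartheta')}\int p_{\vartheta'}\frac{1+\lambda\vartheta\cdot F}{1+\lambda\vartheta'\cdot F}\,\mathrm{d}\nu$, and to evaluate the remaining expectation using the formula for $\nabla\varphi_{\lambda}$ derived in the proof of Theorem \ref{thm:c.exp.potential}, namely $\nabla\varphi_{\lambda}(\vartheta') = \int p_{\vartheta'}\frac{F}{1+\lambda\vartheta'\cdot F}\,\mathrm{d}\nu$. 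Writing $1+\lambda\vartheta\cdot F = (1+\lambda\vartheta'\cdot F) + \lambda(\vartheta-\vartheta')\cdot F$ and dividing, the expectation becomes $1 + \lambda\nabla\varphi_{\lambda}(\vartheta')\cdot(\vartheta-\vartheta')$ (using again that $\int p_{\vartheta'}\frac{1}{1+\lambda\vartheta'\cdot F}\,\mathrm{d}\nu = 1 - \lambda\nabla\varphi_{\lambda}(\vartheta')\cdot\vartheta'$, so the algebra collapses correctly). Taking $\frac{1}{q-1}\log = \frac{-1}{\lambda}\log$ of the whole expression and collecting the $\varphi_{\lambda}$ terms yields exactly ${\bf L}_{\lambda,\varphi_{\lambda}}[\vartheta:\vartheta']$.

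For \eqref{eqn:conjugate.as.Renyi}, the cleanest route is to use the Fenchel-type identity \eqref{eqn:lambda.Fenchel.conjugate}: for $\eta = \nabla^{c_{\lambda}}\varphi_{\lambda}(\vartheta)$ we have $\psi_{\lambda}(\eta) = \varphi_{\lambda}^{c_{\lambda}}(\eta) = -c_{\lambda}(\vartheta,\eta) - \varphi_{\lambda}(\vartheta) = \frac{1}{\lambda}\log(1+\lambda\vartheta\cdot\eta) - \varphi_{\lambda}(\vartheta)$. It then suffices to show $\frac{1}{\lambda}\log(1+\lambda\vartheta\cdot\eta) - \varphi_{\lambda}(\vartheta) = -{\bf H}^{\mathrm{R\acute{e}nyi}}_q(p_{\vartheta}) = \frac{1}{q-1}\log\int p_{\vartheta}^q\,\mathrm{d}\nu = \frac{-1}{\lambda}\log\int p_{\vartheta}^q\,\mathrm{d}\nu$. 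Alternatively, and perhaps more transparently, one can take the limit $\vartheta'\to$ "the uniform/reference direction" or simply observe that \eqref{eqn:conjugate.as.Renyi} is the boundary/degenerate case of \eqref{eqn:L.div.as.Renyi}; but the direct approach is to compute $\int p_{\vartheta}^q\,\mathrm{d}\nu$ using \eqref{eqn:lambda.exp.simplified}: $\int p_{\vartheta}^q\,\mathrm{d}\nu = e^{-q\varphi_{\lambda}(\vartheta)}\int(1+\lambda\vartheta\cdot F)^{q/\lambda}\,\mathrm{d}\nu$, and since $q/\lambda = 1/\lambda - 1$ this equals $e^{-q\varphi_{\lambda}(\vartheta)}\int p_{\vartheta}\frac{e^{\varphi_{\lambda}(\vartheta)}}{1+\lambda\vartheta\cdot F}\,\mathrm{d}\nu = e^{(1-q)\varphi_{\lambda}(\vartheta)}(1-\lambda\nabla\varphi_{\lambda}(\vartheta)\cdot\vartheta) = e^{\lambda\varphi_{\lambda}(\vartheta)}(1-\lambda\nabla\varphi_{\lambda}(\vartheta)\cdot\vartheta)$, using once more the identity from the proof of Theorem \ref{thm:c.exp.potential}. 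On the other hand, from \eqref{eqn:c.Legendre.transform} we have $\vartheta\cdot\eta = \frac{\nabla\varphi_{\lambda}(\vartheta)\cdot\vartheta}{1-\lambda\nabla\varphi_{\lambda}(\vartheta)\cdot\vartheta}$, so $1+\lambda\vartheta\cdot\eta = \frac{1}{1-\lambda\nabla\varphi_{\lambda}(\vartheta)\cdot\vartheta}$, hence $\frac{1}{\lambda}\log(1+\lambda\vartheta\cdot\eta) = \frac{-1}{\lambda}\log(1-\lambda\nabla\varphi_{\lambda}(\vartheta)\cdot\vartheta)$. Combining, $\frac{1}{\lambda}\log(1+\lambda\vartheta\cdot\eta) - \varphi_{\lambda}(\vartheta) = \frac{-1}{\lambda}\log\left(e^{\lambda\varphi_{\lambda}(\vartheta)}(1-\lambda\nabla\varphi_{\lambda}(\vartheta)\cdot\vartheta)\right) = \frac{-1}{\lambda}\log\int p_{\vartheta}^q\,\mathrm{d}\nu$, which is the claim.

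I expect the main obstacle to be bookkeeping rather than anything deep: keeping track of the several places where the identity $\int p_{\vartheta}\frac{1}{1+\lambda\vartheta\cdot F}\,\mathrm{d}\nu = 1-\lambda\nabla\varphi_{\lambda}(\vartheta)\cdot\vartheta$ and the escort-expectation formula $\nabla\varphi_{\lambda}(\vartheta) = \widetilde{\mathbb{E}}$-type integrals are used, and making sure all manipulations under the integral sign are justified by Assumption \ref{ass:lambda.family} and the support condition (Condition \ref{con:technical.conditions}). A subtle point worth spelling out is why we may freely raise $p_{\vartheta}$ and $p_{\vartheta'}$ to powers and integrate: the support condition guarantees a common support $\mathcal{X}_0$, on which \eqref{eqn:lambda.exp.simplified} holds pointwise (since $1+\lambda\vartheta\cdot F > 0$ there by Lemma \ref{lem:positivity}), so the integrands are genuine functions and Fubini/differentiation-under-the-integral are available. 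Once these are in place, both identities follow from the same two auxiliary computations, so it is natural to state \eqref{eqn:L.div.as.Renyi} first and then deduce \eqref{eqn:conjugate.as.Renyi} either directly as above or as its degenerate specialization.
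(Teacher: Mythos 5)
Your computations are correct and complete: expanding $\int p_{\vartheta'}^{q}p_{\vartheta}^{1-q}\,\mathrm{d}\nu$ via the divisive form of the density, the formula $\nabla\varphi_{\lambda}(\vartheta')=\int p_{\vartheta'}\frac{F}{1+\lambda\vartheta'\cdot F}\,\mathrm{d}\nu$ from the proof of Theorem \ref{thm:c.exp.potential}, and the Fenchel-type identity \eqref{eqn:lambda.Fenchel.conjugate} is precisely the argument carried out in the reference [W18, Theorem 13] to which the paper defers, so you have in effect supplied the self-contained proof the paper omits. Two cosmetic points: the equation label you cite for the density should be \eqref{eqn:c.exponential.family}, and in the first identity the auxiliary fact $\int p_{\vartheta'}\frac{1}{1+\lambda\vartheta'\cdot F}\,\mathrm{d}\nu=1-\lambda\nabla\varphi_{\lambda}(\vartheta')\cdot\vartheta'$ is not actually needed once you divide the decomposition $1+\lambda\vartheta\cdot F=(1+\lambda\vartheta'\cdot F)+\lambda(\vartheta-\vartheta')\cdot F$ by $1+\lambda\vartheta'\cdot F$ and integrate against $p_{\vartheta'}$.
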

	\begin{proof}
		See \cite[Theorem 13]{W18}.
	\end{proof}
	
	
	\subsection{R\'{e}nyi entropy maximization}  \label{sec:entropy.maximize}
	Let $q > 0$ and a let a reference measure $\nu$ be given. Let $F = (F_1, \ldots, F_d)$ be a vector of statistics. Consider the R\'{e}nyi entropy maximization problem
	\begin{equation} \label{eqn:max.entropy}
		\max_{P \sim \nu} {\bf H}^{\text{R\'{e}nyi}}_q (P) \text{ subject to } \widetilde{\mathbb{E}}_P [F(X)] = y,
	\end{equation}
	where $P$ is a probability measure, $\widetilde{\mathbb{E}}_P$ is the expectation with respect to the escort distribution $\widetilde{p} = \mathcal{E}_q [p]$ and $p = \frac{dP}{d\nu}$. Last but not least, the constraint $P \sim \nu$ means that the $P$ and $\nu$ are equivalent (thus $p > 0$ $\nu$-a.e.); intuitively, we assume that the support of the distribution $P$ is known to be that of $\nu$. Since the Tsallis and R\'{e}nyi entropies are monotonic transformations of each other (see \eqref{eqn:Tsallis.Renyi}), problem \eqref{eqn:max.entropy} remains the same if we maximize instead the Tsallis entropy of order $q$. It is well-known that the solution to \eqref{eqn:max.entropy} can be written in the form of a $q$-exponential family; see for example the $q$-Max-Ent Theorem in \cite{AO11}. The usual proof of this result uses Lagrange multipliers. Here we give a new proof which utilizes the $\lambda$-exponential family and the associated $\lambda$-logarithmic divergence. By letting $\lambda \rightarrow 0$ we recover the entropy maximizing property of the exponential family.
	
	\begin{theorem} \label{thm:Renyi.entropy.maximize}
	With the given $F$ and $\nu$, consider the $\lambda$-exponential family \eqref{eqn:c.exponential.family} and suppose for some parameter $\vartheta^* \in \Omega$ we have $\widetilde{\mathbb{E}}_{\vartheta^*} [ F(X)] = y$. Then the distribution $P^*$ with $\frac{\mathrm{d} P^*}{\mathrm{d}\nu} = p_{\vartheta^*}$ is the unique solution to the R\'{e}nyi entropy maximization problem \eqref{eqn:max.entropy}.
	\end{theorem}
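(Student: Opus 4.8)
The plan is to transpose to the $\lambda$-setting the textbook proof of the entropy-maximizing property of the exponential family. Recall that there, for an exponential-family density $q_\theta$ and any competitor $p$ with $\mathbb{E}_p[F]=\mathbb{E}_{q_\theta}[F]$, one has the identity ${\bf H}(p||q_\theta)={\bf H}(q_\theta)-{\bf H}(p)\ge0$, and maximality of $q_\theta$ is immediate. In the present framework the Shannon entropy is replaced by the R\'enyi entropy and the KL divergence by the R\'enyi divergence \eqref{eqn:Renyi.divergence}; indeed Theorem~\ref{thm:Renyi.divergence} already tells us that \emph{within} the $\lambda$-exponential family the R\'enyi divergence plays the role of the Bregman divergence of the potential, so the only real work is to produce the analogous relation for an \emph{arbitrary} competitor. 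Write $p^\ast=p_{\vartheta^\ast}$, and let $P$ be a competitor, i.e.\ a probability measure $P\sim\nu$ with $\widetilde{\mathbb{E}}_P[F(X)]=y$, with $p=\mathrm{d}P/\mathrm{d}\nu$. Feasibility of the escort constraint \eqref{eqn:escort} forces $W(p):=\int p^q\,\mathrm{d}\nu\in(0,\infty)$, $\int p^q|F|\,\mathrm{d}\nu<\infty$, and $\int p^qF\,\mathrm{d}\nu=W(p)\,y$. Since ${\bf H}^{\mathrm{R\acute{e}nyi}}_q(p)=\tfrac1{1-q}\log W(p)$, the whole problem reduces to comparing $W(p)$ with $W(p^\ast)$.

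The one substantial computation is as follows. From the divisive form \eqref{eqn:c.exponential.family}, $p^\ast(x)=[1+\lambda\vartheta^\ast\cdot F(x)]_+^{1/\lambda}e^{-\varphi_\lambda(\vartheta^\ast)}$, one obtains the pointwise identity $(p^\ast)^{1-q}=(p^\ast)^{\lambda}=[1+\lambda\vartheta^\ast\cdot F]_+\,e^{-\lambda\varphi_\lambda(\vartheta^\ast)}$, valid for either sign of $\lambda$ under the conventions of Section~\ref{sec:lambda.duality}. Hence, using $[\,\cdot\,]_+\ge(\,\cdot\,)$ and then the escort constraint,
\[
\int p^q(p^\ast)^{1-q}\,\mathrm{d}\nu = e^{-\lambda\varphi_\lambda(\vartheta^\ast)}\!\int p^q[1+\lambda\vartheta^\ast\cdot F]_+\,\mathrm{d}\nu \ \ge\ e^{-\lambda\varphi_\lambda(\vartheta^\ast)}\,W(p)\,(1+\lambda\vartheta^\ast\cdot y).
\]
Running the same computation for $p=p^\ast$ — where the positive part is vacuous by Lemma~\ref{lem:positivity} and $\int(p^\ast)^q(p^\ast)^{1-q}\,\mathrm{d}\nu=\int p^\ast\,\mathrm{d}\nu=1$ since $q+(1-q)=1$ — identifies the prefactor: $e^{-\lambda\varphi_\lambda(\vartheta^\ast)}(1+\lambda\vartheta^\ast\cdot y)=1/W(p^\ast)$. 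Therefore
\[
\int p^q(p^\ast)^{1-q}\,\mathrm{d}\nu\ \ge\ \frac{W(p)}{W(p^\ast)},
\]
and this is moreover an \emph{equality} when $q>1$ (i.e.\ $\lambda<0$), because then $\exp_q$ takes the value $+\infty$ on $\{1+\lambda\vartheta^\ast\cdot F\le0\}$, so finiteness of $\varphi_\lambda(\vartheta^\ast)$ already forces $1+\lambda\vartheta^\ast\cdot F>0$ $\nu$-a.e.\ and the truncation $[\,\cdot\,]_+$ is again harmless.

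Next I would combine this with nonnegativity of the R\'enyi divergence, ${\bf H}^{\mathrm{R\acute{e}nyi}}_q(p||p^\ast)\ge0$, i.e.\ $\int p^q(p^\ast)^{1-q}\,\mathrm{d}\nu\le1$ if $0<q<1$ and $\ge1$ if $q>1$. For $0<q<1$ one gets $W(p)/W(p^\ast)\le\int p^q(p^\ast)^{1-q}\,\mathrm{d}\nu\le1$, hence $W(p)\le W(p^\ast)$ and, since $\tfrac1{1-q}>0$, ${\bf H}^{\mathrm{R\acute{e}nyi}}_q(p)\le{\bf H}^{\mathrm{R\acute{e}nyi}}_q(p^\ast)$; for $q>1$ the relation above is an equality, so $W(p)/W(p^\ast)=\int p^q(p^\ast)^{1-q}\,\mathrm{d}\nu\ge1$, hence $W(p)\ge W(p^\ast)$ and, since $\tfrac1{1-q}<0$, again ${\bf H}^{\mathrm{R\acute{e}nyi}}_q(p)\le{\bf H}^{\mathrm{R\acute{e}nyi}}_q(p^\ast)$. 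In both cases this is summarized by $0\le{\bf H}^{\mathrm{R\acute{e}nyi}}_q(p||p^\ast)\le{\bf H}^{\mathrm{R\acute{e}nyi}}_q(p^\ast)-{\bf H}^{\mathrm{R\acute{e}nyi}}_q(p)$, the exact analogue of the exponential-family identity. For uniqueness, equality ${\bf H}^{\mathrm{R\acute{e}nyi}}_q(p)={\bf H}^{\mathrm{R\acute{e}nyi}}_q(p^\ast)$ forces ${\bf H}^{\mathrm{R\acute{e}nyi}}_q(p||p^\ast)=0$, and the strict convexity (resp.\ concavity) of $t\mapsto t^q$ — the equality case of H\"older's/Jensen's inequality applied to $\int p^q(p^\ast)^{1-q}\,\mathrm{d}\nu$ — forces $p=p^\ast$ $\nu$-a.e.; the hypothesis $P\sim\nu$ (so that $p>0$ $\nu$-a.e.) enters here. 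Letting $\lambda\to0$ recovers the classical statement.

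The part I expect to require the most care is the bookkeeping of signs around $1-q=\lambda$: when $0<q<1$, ``maximize the R\'enyi entropy'' means ``maximize $\int p^q\,\mathrm{d}\nu$'', whereas when $q>1$ it means ``minimize'' it, so one must check that the single inequality above feeds the two regimes in the correct directions — and in particular that for $q>1$ it must be used as an \emph{equality}, which is precisely the point at which the support behaviour of $\exp_q$ (Lemma~\ref{lem:positivity}) becomes essential rather than cosmetic. A secondary, purely technical point is to record exactly which integrability facts ($W(p)<\infty$, finiteness of the escort expectation, legitimacy of $\int p^qF\,\mathrm{d}\nu=W(p)\,y$) are already built into the feasibility of the constraint $\widetilde{\mathbb{E}}_P[F]=y$, so that the computation above is justified without any additional hypotheses.
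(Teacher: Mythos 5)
Your proposal is correct and follows essentially the same route as the paper's own proof: both arguments evaluate the R\'{e}nyi divergence ${\bf H}^{\mathrm{R\acute{e}nyi}}_q(P\,\|\,P_{\vartheta^*})$ by exploiting the fact that $(p_{\vartheta^*})^{1-q}$ is affine in $F$, feed in the escort constraint to identify it with ${\bf H}^{\mathrm{R\acute{e}nyi}}_q(P_{\vartheta^*})-{\bf H}^{\mathrm{R\acute{e}nyi}}_q(P)$, and conclude from nonnegativity of the R\'{e}nyi divergence. The only difference is cosmetic bookkeeping: you track $W(p)=\int p^q\,\mathrm{d}\nu$ and handle the truncation $[\,\cdot\,]_+$ via an inequality split by the sign of $\lambda$, whereas the paper disposes of the positive part at once by noting (via Lemma \ref{lem:positivity} and feasibility of $P^*$) that $1+\lambda\vartheta^*\cdot F>0$ holds $\nu$-a.e., obtaining the exact identity \eqref{eqn:Renyi.divergence.add} directly.
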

	\begin{proof}
		Let $P \sim \nu$ be a distribution which satisfies the constraint on the escort expectation, and let $p = \frac{dP}{d\nu} > 0$ be its density.
		
		Consider the R\'{e}nyi divergence
		\begin{equation*}
			\begin{split}
				{\bf H}^{\text{R\'{e}nyi}}_q(P || P_{\vartheta^*}) &= \frac{1}{q - 1} \log \int p^{q} p_{\vartheta^*}^{1 - q} \mathrm{d}\nu.
			\end{split}
		\end{equation*} 
		Since $\lambda = 1 - q$, by \eqref{eqn:c.exponential.family}  and Lemma \ref{lem:positivity}, we have, $\nu$-a.e., 
		\begin{equation*}
			\begin{split}
				p_{\vartheta^*}^{1 - q} (x) &= [1 + \lambda \vartheta^* \cdot F(x)]^{}_+ \, e^{-\lambda \varphi_{\lambda}(\vartheta^*)} \\
				&= [1 + \lambda \vartheta^* \cdot F(x)] \, e^{-\lambda \varphi_{\lambda}(\vartheta^*)}.
			\end{split}
		\end{equation*}
		Now we compute
		\begin{equation} \label{eqn:Renyi.div.computation}
			\begin{split}
				&{\bf H}^{\text{R\'{e}nyi}}_q(P || P_{\vartheta^*}) 
				= \frac{-1}{\lambda}  \log \left( \int p^{q} \, (1 + \lambda \vartheta^* \cdot F) \, \mathrm{d}\nu\right) + \varphi_{\lambda}(\vartheta^*) \\
				&= \frac{-1}{\lambda} \log \int p^{q} \mathrm{d}\nu - \frac{1}{\lambda} \log \left( 1 + \lambda \vartheta^* \cdot \int \frac{p^{q}}{\int p^{q }  \mathrm{d}\nu} F \mathrm{d}\nu \right) + \varphi_{\lambda}(\vartheta^*) \\
				&= - {\bf H}^{\text{R\'{e}nyi}}_q (P) - \frac{1}{\lambda} \log (1 + \alpha \vartheta^* \cdot y) + \varphi_{\lambda}(\vartheta^*).
			\end{split}
		\end{equation}		
		Note that in the last equality we used the assumption $\widetilde{\mathbb{E}}_P [ F(X)] = y$.
		
		On the other hand, by construction we have
		\begin{equation*}
			\begin{split}
				&c = \int F \frac{p_{\vartheta^*}^{q}}{\int p_{\vartheta^*}^{q} \mathrm{d}\nu} \mathrm{d}\nu \\
				&\Rightarrow 1 + \lambda \vartheta^* \cdot y = \int (1 + \lambda \vartheta^* \cdot F) \frac{p_{\vartheta^*}^{q}}{\int p_{\vartheta^*}^{q} \mathrm{d}\mu} \mathrm{d}\mu. 
			\end{split}
		\end{equation*}
		Taking logarithm and rearranging, we have
		\begin{equation*}
			\begin{split}
				&\log(1 + \lambda \vartheta^* \cdot y) = \log \int (1 + \lambda \vartheta^* \cdot F) p_{\vartheta^*}^{q} \mathrm{d}\nu - \log \int p_{\vartheta^*}^{q} \mathrm{d}\nu \\
				&= \log \int p_{\theta^*} e^{\lambda \varphi_{\lambda}(\vartheta^*)} \mathrm{d}\nu - \log \int p_{\vartheta}^{1 + \alpha} \mathrm{d}\nu \\
				&= \lambda \varphi_{\lambda}(\vartheta^*) - \log \int p_{\vartheta^*}^{q} \mathrm{d}\nu.
			\end{split}
		\end{equation*}
		It follows that
		\[
		{\bf H}^{\text{R\'{e}nyi}}_q(P_{\vartheta^*}) = \frac{-1}{\lambda} \log (1 + \lambda \vartheta^* \cdot y) + \varphi_{\lambda}(\vartheta^*).
		\]
		Plugging into \eqref{eqn:Renyi.div.computation}, we have
		\begin{equation*} 
			{\bf H}^{\text{R\'{e}nyi}}_q(P_{\vartheta^*}) - {\bf H}^{\text{R\'{e}nyi}}_q(P) = {\bf H}^{\text{R\'{e}nyi}}_q(P || P_{\vartheta^*}) \geq 0.
		\end{equation*}
		Thus ${\bf H}^{\text{R\'{e}nyi}}_q(P_{\vartheta^*}) \geq {\bf H}^{\text{R\'{e}nyi}}_q(P)$ and equality holds only if $P = P_{\vartheta^*}$. This completes the proof of the theorem.
	\end{proof}
	
	Observe that the proof of the theorem reveals something more. Not only do we get the  R\'{e}nyi entropy maximizing property of the $\lambda$-exponential family, we also obtain an {\it explicit} expression of the optimality gap in terms of the R\'{e}nyi divergence:
	\begin{equation} \label{eqn:Renyi.divergence.add}
		{\bf H}^{\text{R\'{e}nyi}}_q(P_{\vartheta^*}) = {\bf H}^{\text{R\'{e}nyi}}_q(P) + {\bf H}^{\text{R\'{e}nyi}}_q(P || P_{\vartheta^*}).
	\end{equation}
	Thus the R\'{e}nyi divergence equals the difference of two R\'{e}nyi entropies. To the best of our knowledge, this result is new. We also recall that the R\'{e}nyi divergence is additive under independent sampling: Given two product probability measures $P_1 \otimes P_2$ and $Q_1 \otimes Q_2$, we have
	\begin{equation} \label{eqn:Renyi.divergence.additive}
		\begin{split}
			&{\bf H}^{\mathrm{R\acute{e}nyi}}_q(P_1 \otimes P_2 || Q_1 \otimes Q_2) \\
			&= {\bf H}^{\mathrm{R\acute{e}nyi}}_q(P_1 || Q_1) + {\bf H}^{\mathrm{R\acute{e}nyi}}_q(P_2 || Q_2).
		\end{split}
	\end{equation}
	Equations \eqref{eqn:Renyi.divergence.add} and \eqref{eqn:Renyi.divergence.additive} show the advantage of using R\'{e}nyi entropy and divergence (which correspond to the $\lambda$-duality) over the Tsallis entropy and divergence. We also mention the recent paper \cite{MR21} which studies R\'{e}nyi entropy maximization using the $L^{(\alpha)}$-divergence of \cite{W18} (which is the same as the $(-\alpha)$-logarithmic divergence) and orthogonal foliations of statistical manifolds. The foliation is studied independently in \cite{TW21} and applied in nonlinear principal component analysis. 
	
	\subsection{Examples} \label{sec:q.exp.examples}
	In this subsection we illustrate the framework with some representative probability distributions. In particular, we observe that the {\it $\alpha$-family} studied by Amari and Nagaoka \cite{AN00} can be viewed as a special case of the $\lambda$-exponential family (and hence the $q$-exponential family). 
	
	\begin{example} [Cauchy location-scale family] \label{ex:Cauchy}
		Consider the Cauchy location-scale family whose density on $\mathbb{R}$ (with respect to the Lebesgue measure) is given by
		\begin{equation} \label{eqn:Cauchy}
			p(x; \mu, \sigma) = \frac{1}{\pi} \frac{\sigma}{\sigma^2 + (x - \mu)^2},
		\end{equation}
		where $(\mu, \sigma) \in \mathbb{R} \times (0, \infty)$. Define $F(x) = (x, x^2)$. Rearranging \eqref{eqn:Cauchy}, we may write
		\begin{equation}
			p(x; \mu, \sigma) = p(x ; \vartheta) 
			:= \frac{1}{1 - \vartheta \cdot F(x)}\frac{1}{\pi} \sqrt{ -\vartheta_2 -  \frac{\vartheta_1^2}{4}   },
		\end{equation}
		where
		\[
		\vartheta = (\vartheta_1, \vartheta_2) = \left( \frac{2\mu}{\mu^2 + \sigma^2}, \frac{-1}{\mu^2 + \sigma^2} \right).
		\]
		Equivalently, we have $\mu = \frac{-\vartheta_1}{2\vartheta_2}$ and $\sigma^2 = \frac{-4\vartheta_2 - \vartheta_1^2}{4\vartheta_2^2}$. This expresses the Cauchy family as a $\lambda$-exponential family with $\lambda = -1$. The divisive potential function is given by
		\begin{equation} \label{eqn:cauchy.potential}
			\varphi_{-1}(\vartheta) = \frac{-1}{2} \log \left(-\vartheta_2 -  \frac{\vartheta_1^2}{4}  \right) + \log \pi,
		\end{equation}
		and the natural parameter set is $\Omega = \{\vartheta \in \mathbb{R}^2 :  \vartheta_1^2 + 4 \vartheta_2<0 \}$ whose boundary is a parabola. By a direct computation using \eqref{eqn:cauchy.potential}, it can be shown that the dual variable is given by
		\[
		\eta = \nabla^{c_{-1}} \varphi_{-1}(\vartheta) = \left( \frac{-\vartheta_1}{2 \vartheta_2}, \frac{-1}{\vartheta_2} \right).
		\]
		Similarly, for any fixed degree of freedom, Student's $t$-distribution parameterized by location and scale can be expressed as a $\lambda$-exponential family; see Example \ref{eg:t.dist}. 
	\end{example}
	
	\begin{example} [$q$-Gaussian distribution] \label{ex:q.Guassian}
		For $q < 3$, the (centered) $q$-Gaussian distribution on the real line is defined by the parameterized density
		\begin{equation} \label{eqn:q.Gaussian}
			f(x ; \vartheta) = C_q \sqrt{\vartheta} \exp_q(-\vartheta x^2), \quad x \in \mathbb{R},
		\end{equation}
		where $\nu$ is the Lebesgue measure, $\vartheta \in \Omega = (0, \infty)$ and $C_q > 0$ is an explicit constant depending only on the value of $q$ (see for example \cite[(7.41)]{N11}). When $q < 1$ the density is supported on the finite interval $|x| < \sqrt{\vartheta/(1 - q)}$ which depends on $\vartheta$, so the support condition is violated. For $1 < q < 3$, the density is a scaled and reparameterized version of the $t$-distribution with $\frac{3 - q}{q - 1}$ degrees of freedom. When $q = 1$, it reduces to the normal distribution $N(0, \frac{1}{2})$. When $q \geq 3$ the density cannot be normalized. It is helpful to consider $\sigma^2 = 1/\vartheta$ where $\sigma > 0$ is the scale parameter.
		
		Comparing \eqref{eqn:q.Gaussian} with \eqref{eqn:c.exponential.family}, we see that \eqref{eqn:q.Gaussian} is a $\lambda$-exponential family with $\lambda = 1 - q$, $\vartheta \in (0, \infty)$ and $F(x) = -x^2$. The divisive $\lambda$-potential is given by
		\begin{equation} \label{eqn:q.exp.potential}
			\varphi_{\lambda}(\vartheta) = \frac{-1}{2} \log \vartheta  + C_{\lambda}',
		\end{equation}
		where $C_{\lambda}' = \log C_q$. It is easy to verify that $\varphi_{\lambda}$ is regular $c_{\lambda}$-convex for all $q < 3$ (or $\lambda > -2$). In particular, for $\lambda > -2$ we have $1 - \lambda \varphi_{\lambda}'(\vartheta) \vartheta = 1 + \frac{\lambda}{2} > 0$.
		
		The $\lambda$-logarithmic divergence is given for $\vartheta, \vartheta' \in (0, \infty)$ by 
		\begin{equation} \label{eqn:q.Gaussian.div}
			{\bf L}_{\lambda, \varphi_{\lambda}} [ \vartheta : \vartheta'] = \frac{1}{2} \log \frac{\vartheta'}{\vartheta} - \frac{1}{\lambda} \log \left( 1 - \frac{\lambda}{2} \left( \frac{\vartheta}{\vartheta'} - 1\right) \right).
		\end{equation}
		See Figure \ref{fig:q.Gaussian} for a graphical illustration of this divergence for several values of $\lambda$. Note that the divergence may take value $+\infty$ when $\lambda > 0$.
		
		\begin{figure}[!t]
			\centering
			\includegraphics[width=3.5in]{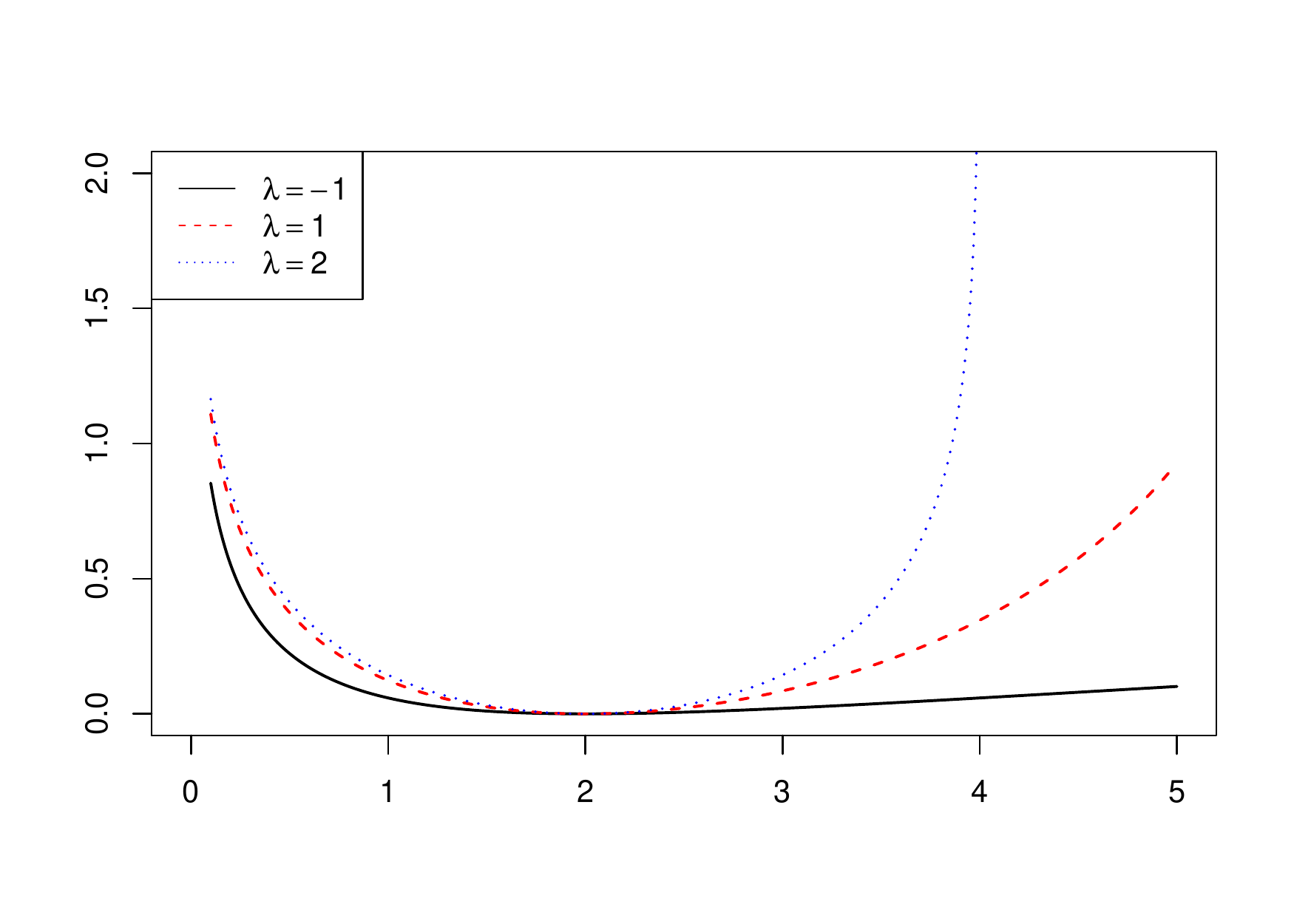}
			\vspace{-0.8cm}
			\caption{The $\lambda$-logarithmic divergence $\vartheta \mapsto {\bf L}_{\lambda, \varphi_{\lambda}} [\vartheta : \vartheta_0]$ of the $q$-Gaussian distribution (see \eqref{eqn:q.Gaussian.div}), as a function of the first variable $\vartheta \in (0, \infty)$, for several values of $\lambda$. Here $\vartheta_0 = 2$.}
			\label{fig:q.Gaussian}
		\end{figure}
		
		The dual parameter is given by
		\[
		\eta = \nabla^{c_{\lambda}} \varphi_{\lambda}(\vartheta) =\frac{-1}{2 + \lambda} \frac{1}{\vartheta} = \frac{-1}{3 - q} \sigma^2.
		\]
		Note that we have a minus sign because $F(x) = -x^2$. This is consistent with the known value $\widetilde{\mathbb{E}}_{\vartheta} [X^2] = \frac{1}{3 - q} \sigma^2$ (see for example \cite[Problem 7.6]{N11}). Note that here this formula holds even when $q < 0$.		
		
		Next consider the $\lambda$-conjugate of $\varphi_{\lambda}$ given by
		\[
		\psi_{\lambda}(\eta) = \sup_{\vartheta' > 0} \left\{ \frac{1}{\lambda} \log(1 + \lambda \vartheta' \eta) - \left( \frac{-1}{2} \log \vartheta'  + C_{\lambda}' \right) \right\}.
		\]
		We may optimize over $\vartheta > 0$ because $\frac{-1}{2} \log \vartheta = \infty$ for $\vartheta \leq 0$. It can be verified that the $\lambda$-conjugate, which is the R\'{e}nyi entropy by Theorem \ref{thm:Renyi.divergence}, is given by
		\begin{equation} \label{eqn:q.exp.dual.function}
			\psi_{\lambda}(\eta) = \frac{-1}{2} \log (-\eta) + C_{\lambda}'',
		\end{equation}
		where $C_{\lambda}''$ is a constant depending on $\lambda$. Note that \eqref{eqn:q.exp.potential} and \eqref{eqn:q.exp.dual.function} have the same form other than a change of sign. 
	\end{example}
	
	Another fundamental example of $\lambda$-exponential family is the {\it Dirichlet perturbation model}. It was used in \cite{PW18b} to study the  Dirichlet optimal transport problem whose solution utilizes the logarithmic duality used in this paper. According to \cite{D68}, this distribution -- also called the {\it shifted Dirichlet distribution} -- was considered by Savage as early as 1966. To the best of our knowledge, it has not been considered in the context of $q$-exponential family (also see \cite{RT14}).
	
	\begin{example} [Dirichlet perturbation] \label{eg:Dirichlet}
		Consider the open unit simplex $\Delta^d$ defined by \eqref{eqn:unit.simplex}. We define a commutative operation, called perturbation, on $\Delta^d$ by
		\begin{equation} \label{eqn:perturbation}
			p \oplus q = \left( \frac{p_0q_0}{\sum_{k = 0}^{d} p_k q_k}, \ldots, \frac{p_{d}q_{d}}{\sum_{k = 0}^{d} p_k q_k} \right).
		\end{equation}
		Under the Aitchison geometry in compositional data analysis \cite{A82, EPMB03}, the perturbation is a vector addition. As mentioned in Section \ref{sec:classic.deformation}, the escort transformation plays the role of scalar multiplication.
		
		Fix $\sigma > 0$ to be interpreted as a noise parameter, and let $\lambda = -\sigma < 0$. Let $D = (D_0, D_1, \ldots, D_{d})$ be a Dirichlet random vector with parameters $(\sigma^{-1}/(1 + d), \ldots, \sigma^{-1}/(1 + d))$. Note that as $\sigma \downarrow 0$ the distribution of $D$ concentrates at the barycenter $\overline{e} = (1/(1 + d), \ldots, 1/(1 + d))$ of $\Delta^{d}$. For $p \in \Delta^d$ fixed, consider the random vector $Q = p \oplus D$. It is helpful to think of this probabilistic model as a multiplicative analogue of the additive Gaussian model $Y = \theta + Z$ where $Z \sim N(0, \sigma^2 I)$. In \cite{PW18b} we used this model to construct a probabilistic solution to the Dirichlet transport.
		
		The distribution of $Q$ is given in \cite[Lemma 8]{PW18b}: 
		
		\begin{lemma}
			The density of $Q$ with respect to the Lebesgue measure $dq_1 dq_2 \cdots dq_{d}$ on the domain $\{q_1, \ldots, q_{d} > 0, q_1 + \cdots + q_{d} < 1\}$ is given by
			\begin{equation} \label{eqn:pD.density}
				f_{\lambda}(q \mid p) = \frac{C_{d, \lambda}}{\prod_{i = 0}^{d} q_i} \prod_{i = 0}^{d} \left( \frac{q_i}{p_i} \right)^{\frac{-1}{\lambda (1 + d)}} \left( \sum_{i = 0}^{d} \frac{q_i}{p_i} \right)^{1/\lambda},
			\end{equation}
			where $C_{d, \lambda} > 0$ is a constant depending only on $d$ and $\lambda$.
		\end{lemma}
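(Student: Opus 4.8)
The plan is to derive the density of $Q$ from that of $D$ by a change of variables. Put $a := \sigma^{-1}/(1+d) = \frac{-1}{\lambda(1+d)}$ (a positive number since $\lambda < 0$), so that $D$ has density $f_D(d) = \frac{\Gamma((1+d)a)}{\Gamma(a)^{1+d}}\prod_{i=0}^d d_i^{a-1}$ with respect to Lebesgue measure on the coordinates $(d_1,\dots,d_d)$, where $d_0 = 1-\sum_{i\ge 1}d_i$. Since the perturbation \eqref{eqn:perturbation} of two vectors depends only on the rays they span in $(0,\infty)^{1+d}$, the map $d\mapsto q = p\oplus d$ is a bijection of $\Delta^d$ whose inverse is $q\mapsto d = p^{-1}\oplus q$ with $p^{-1}:=(1/p_0,\dots,1/p_d)$; concretely, $d_i = (q_i/p_i)\big/\sum_{k}(q_k/p_k)$.

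The key step is the Jacobian of the perturbation map. For a generic weight vector $r=(r_0,\dots,r_d)\in(0,\infty)^{1+d}$ and $q_i = r_id_i/S$ with $S=\sum_k r_kd_k$, differentiating with $d_0 = 1-\sum_{i\ge1}d_i$ substituted in shows that the Jacobian matrix $\partial q/\partial d$ equals $\frac{1}{S}\,\mathrm{diag}(r_1,\dots,r_d)\bigl(I-\tfrac{1}{S}vw^\top\bigr)$, where $v=(d_1,\dots,d_d)^\top$ and $w=(r_1-r_0,\dots,r_d-r_0)^\top$. The matrix determinant lemma together with the identity $w^\top v = S-r_0$ then gives
\[
\Bigl|\det\tfrac{\partial q}{\partial d}\Bigr| \;=\; \frac{\prod_{i=1}^d r_i}{S^d}\Bigl(1-\frac{w^\top v}{S}\Bigr) \;=\; \frac{\prod_{i=0}^d r_i}{S^{d+1}}.
\]
Taking $r = p^{-1}$ yields that the map $q\mapsto d$ has Jacobian $\prod_i p_i^{-1}\big/\bigl(\sum_k q_k/p_k\bigr)^{d+1}$.

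Finally I would substitute $d_i = (q_i/p_i)\big/T$ with $T:=\sum_k q_k/p_k$ into $f_D$ and multiply by this Jacobian. The factor $\prod_i d_i^{a-1}$ contributes $\prod_i(q_i/p_i)^{a-1}$ and a power $T^{-(1+d)(a-1)}$, while the Jacobian contributes $\prod_ip_i^{-1}$ and $T^{-(1+d)}$; the total exponent of $T$ is $-(1+d)a = 1/\lambda$, and the remaining product collapses to $\prod_i q_i^{a-1}p_i^{-a} = \frac{1}{\prod_i q_i}\prod_i(q_i/p_i)^{a} = \frac{1}{\prod_i q_i}\prod_i(q_i/p_i)^{-1/(\lambda(1+d))}$. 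This is precisely \eqref{eqn:pD.density} with $C_{d,\lambda} = \Gamma((1+d)a)/\Gamma(a)^{1+d} = \Gamma(-1/\lambda)\big/\Gamma\bigl(\tfrac{-1}{\lambda(1+d)}\bigr)^{1+d}$, which depends only on $d$ and $\lambda$ as claimed. The one genuine obstacle is the Jacobian evaluation, but once the rank-one structure $I-\tfrac1Svw^\top$ is spotted it is a one-line determinant computation and the rest is bookkeeping with exponents. (Alternatively, this is Lemma 8 of \cite{PW18b}.)
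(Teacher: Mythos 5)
Your derivation is correct: the Jacobian of the perturbation map is exactly $\prod_{i=0}^d r_i / S^{d+1}$ via the rank-one structure and $w^\top v = S - r_0$, and the exponent bookkeeping ($-(1+d)a = 1/\lambda$, $\prod_i q_i^{a-1}p_i^{-a} = \tfrac{1}{\prod_i q_i}\prod_i (q_i/p_i)^{a}$) reproduces \eqref{eqn:pD.density} with the stated constant. The paper itself gives no proof here — it simply cites \cite[Lemma 8]{PW18b} — so your change-of-variables argument supplies the standard derivation that the citation points to, and it checks out in full.
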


		Consider on $\Delta^d$ the Dirichlet cost function given by
		\begin{equation} \label{eqn:Dirichlet.cost}
			c(p, q) = \log \left( \frac{1}{1 + d} \sum_{i = 0}^{d} \frac{q_i}{p_i} \right) - \sum_{i = 0}^{d} \frac{1}{1 + d} \log \frac{q_i}{p_i}.
		\end{equation}
		Following \cite{PW18b}, we observe that
		\begin{equation} \label{eqn:f.log.likelihood.as.c}
			f_{\lambda}(q \mid p) \propto \frac{1}{\prod_{i = 0}^{d} q_i}  e^{ -\sigma c(p, q)}.
		\end{equation}
		This justifies the name of the optimal transport problem. We note in passing that $ \frac{1}{\prod_{i = 0}^{d} q_i} dq_1 \cdots dq_{d}$ is a Haar measure on the Aitchison simplex, and in this context it is commonly called the {\it Aitchison measure}.
		
		Now we observe that the distribution of $Q = p \oplus D$ (parameterized by $p$) on $\Delta^d$ can be expressed as a $\lambda$-exponential family after a suitable reparameterization. 
		
		\begin{proposition}[Dirichlet perturbation as an $\lambda$-exponential family] \label{prop:DP.as.F.alpha}
			Let $\lambda = -\sigma < 0$. Consider the state space  $\Delta^d$ be the state space. For $i = 1, \ldots, d$, let $F_i(q) = q_i/q_0$ and $\vartheta_i =  p_0/ (\lambda p_i)$. Note that $\vartheta$ takes values in $\Omega = (-\infty, 0)^{d}$.
			
			Let the reference measure be
			\[
			\mathrm{d}\nu(q) = C_{n, \lambda}' \frac{\prod_{i = 1}^{d} F_i(q)^{-1/\lambda (1 + d)}}{\prod_{i = 0}^{d} q_i} \mathrm{d}q_1 \cdots \mathrm{d}q_{d},
			\]
			where $C_{d, \lambda}' > 0$ is an appropriate constant, and let $\rho( q ; \theta)$ be the density of $Q$ with respect to $\nu$. Then
			\begin{equation} \label{eqn:pD.density.as.F.alpha}
				\rho(q ; \theta) = \left(1 + \lambda \vartheta \cdot F(q) \right)^{1/\lambda} e^{\sum_{i = 1}^{d} \frac{-1}{\lambda (1 + d)} \log (- \vartheta_i)}.
			\end{equation}
			Thus $\rho(\cdot ; \vartheta)$ is a $\lambda$-exponential family with
			\begin{equation} \label{eqn:DT.varphi}
				\varphi_{\lambda}(\vartheta) = \sum_{i = 1}^{d} \frac{1}{\lambda (1 + d)} \log (- \vartheta_i), \quad \vartheta \in (-\infty, 0)^{d}.
			\end{equation} 
		\end{proposition}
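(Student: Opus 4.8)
The plan is to start from the explicit density \eqref{eqn:pD.density} of $Q = p \oplus D$ with respect to the Lebesgue measure $\mathrm{d}q_1 \cdots \mathrm{d}q_d$, divide it by the Lebesgue density of the proposed reference measure $\nu$ to obtain $\rho(q; \vartheta)$, and then show that after substituting $F_i(q) = q_i/q_0$ and $\vartheta_i = p_0/(\lambda p_i)$ all the $q$-dependence collapses into the single factor $(1 + \lambda \vartheta \cdot F(q))^{1/\lambda}$. The algebraic engine is the identity
\[
\sum_{i = 0}^d \frac{q_i}{p_i} = \frac{q_0}{p_0} \left( 1 + \sum_{i = 1}^d \frac{p_0}{p_i} \frac{q_i}{q_0} \right) = \frac{q_0}{p_0} \left( 1 + \lambda \vartheta \cdot F(q) \right),
\]
which converts the ``power of a sum'' in \eqref{eqn:pD.density} into the $\lambda$-exponential form. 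The right-hand side is strictly positive on $\Delta^d$, so the support condition holds automatically and \eqref{eqn:exp.c} lets us rewrite $(1 + \lambda \vartheta \cdot F(q))^{1/\lambda} = \exp_q(\vartheta \cdot F(q))$.

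Next I would carry out the exponent bookkeeping. Writing $a = \frac{-1}{\lambda(1 + d)}$, cancelling the Lebesgue densities produces the ratio $\prod_{i = 0}^d (q_i/p_i)^a \big/ \prod_{i = 1}^d (q_i/q_0)^a$, which simplifies to $q_0^{a(1 + d)} \prod_{i = 0}^d p_i^{-a} = q_0^{-1/\lambda} \prod_{i = 0}^d p_i^{-a}$ using $a(1 + d) = -1/\lambda$; the factor $q_0^{-1/\lambda}$ then cancels exactly the $(q_0/p_0)^{1/\lambda}$ coming from the identity above. What remains is $(1 + \lambda \vartheta \cdot F(q))^{1/\lambda}$ times the purely $\vartheta$-dependent quantity $\frac{C_{d, \lambda}}{C_{d, \lambda}'} p_0^{-1/\lambda} \prod_{i = 0}^d p_i^{-a}$. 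Substituting $p_i = p_0/(\lambda \vartheta_i) = p_0 / ((-\lambda)(-\vartheta_i))$ for $i \geq 1$ makes the power of $p_0$ vanish (again because $a(1 + d) = -1/\lambda$), leaving $\frac{C_{d, \lambda}}{C_{d, \lambda}'} (-\lambda)^{da} \prod_{i = 1}^d (-\vartheta_i)^a = \frac{C_{d, \lambda}}{C_{d, \lambda}'} (-\lambda)^{da} \exp\left( a \sum_{i = 1}^d \log(-\vartheta_i) \right)$, every factor being well defined since $-\lambda > 0$ and $-\vartheta_i > 0$.

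To finish, I would fix the normalizing constant $C_{d, \lambda}' := C_{d, \lambda} (-\lambda)^{da}$ in the reference measure so that the $\vartheta$-dependent prefactor becomes exactly $\exp\left( a \sum_{i = 1}^d \log(-\vartheta_i) \right) = e^{-\varphi_{\lambda}(\vartheta)}$ with $\varphi_{\lambda}(\vartheta) = \sum_{i = 1}^d \frac{1}{\lambda(1 + d)} \log(-\vartheta_i)$, yielding \eqref{eqn:pD.density.as.F.alpha} and \eqref{eqn:DT.varphi}; this choice is what ``an appropriate constant'' refers to. It is legitimate because $\vartheta_i = p_0/(\lambda p_i) < 0$ for every $p \in \Delta^d$, so $\vartheta$ ranges over $\Omega = (-\infty, 0)^d$ and $\varphi_{\lambda}$ is finite there, and $p \mapsto \vartheta$ is a bijection of $\Delta^d$ onto $(-\infty, 0)^d$ whose inverse is recovered from $p_i/p_0 = 1/(\lambda \vartheta_i)$ together with $\sum_{i = 0}^d p_i = 1$. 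Normalization $\int \rho(\cdot ; \vartheta) \, \mathrm{d}\nu = 1$ is automatic since $f_{\lambda}(\cdot \mid p)$ is already a Lebesgue probability density. The only real obstacle is the careful tracking of the fractional exponents---making sure every power of $q_0$ and of $p_0$ cancels---but this is purely mechanical once the identity for $\sum_i q_i/p_i$ is in hand, and no genuine analytic difficulty arises, which is why \eqref{eqn:pD.density.as.F.alpha} comes out so clean.
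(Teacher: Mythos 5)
Your computation is correct and is exactly the ``direct computation'' that the paper explicitly omits: dividing the Lebesgue density \eqref{eqn:pD.density} by the density of $\nu$, using $\sum_{i=0}^d q_i/p_i = \frac{q_0}{p_0}(1+\lambda\vartheta\cdot F(q))$, and checking that all powers of $q_0$ and $p_0$ cancel (both via $a(1+d)=-1/\lambda$) so that the remaining $\vartheta$-dependent prefactor is $e^{-\varphi_\lambda(\vartheta)}$ after choosing $C_{d,\lambda}'=C_{d,\lambda}(-\lambda)^{da}$. The exponent bookkeeping, the positivity of $1+\lambda\vartheta\cdot F(q)$, and the bijectivity of $p\mapsto\vartheta$ onto $(-\infty,0)^d$ all check out.
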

		
		We omit the proof as it is mostly direct computation. Statistical applications of the Dirichlet perturbation model and its logarithmic divergence will be investigated in a future paper. See Section \ref{sec:statistical.conseq} for more discussion.
	\end{example}
	
	Our final example in this subsection is the {\it $\alpha$-family} studied by Amari and Nagaoka \cite{AN00}. In Section \ref{sec:alpha.family} we will specialize it to study mixture-type families.
	
	\begin{example} [$\alpha$-family] \label{eg:alpha.family}
		For $\alpha \in \mathbb{R} \setminus \{0\}$ fixed, define the $\alpha$-embedding function
		\begin{equation} \label{eqn:alpha.embedding}
			L_{\alpha}(t) = \frac{2}{1 - \alpha} t^{\frac{1 - \alpha}{2}}, \quad t > 0.
		\end{equation}
		A family $\mathcal{I}$ of positive functions on a state space $\mathcal{X}$ is said to be {\it $\alpha$-affine} if the family $\{ L_{\alpha} \circ f : f \in \mathcal{I}\}$ is convex. A parameterized density $p(\cdot ; \xi)$, $\xi \in \Xi \subset \mathbb{R}^d$, is an {\it $\alpha$-family} if its denormalization 
		\[
		\{ \tau p(\cdot ; \xi) : \tau > 0, \xi \in \Xi\}
		\]
		is $\alpha$-affine (i.e., autoparallel with respect to the $\alpha$-connection). By \cite[(2.76)]{AN00}, an $\alpha$-family has the representation
		\begin{equation} \label{eqn:alpha.family}
			p(x ; \xi) = \left( \sum_{i = 0}^d \theta_i(\xi) F_{i}(x) \right)^{2/(1 - \alpha)}.
		\end{equation}
		See \cite{ZJ04} for a more general $\rho$-affine family and its dual affine geometry.
		
		Let us observe that the $\alpha$-family can be reparameterized as a $\lambda$-exponential family, and hence a $q$-exponential family. To see this, we assume, without loss of generality, that $F_0 > 0$ and $\theta_0(\xi) > 0$. Then, we have
		\begin{equation*}
			\begin{split}
				p(x; \xi) &= \left(\theta_0(\xi) F_0(x) + \sum_{i = 1}^d  \theta_i(\xi) F_{i}(x) \right)^{2/(1 - \alpha)} \\
				&= \left( \theta_0(\xi) F_0(x) \right)^{1/\lambda} \left( 1 + \lambda \sum_{i = 1}^d  \frac{\theta_i(\xi)}{\lambda \theta_0(\xi)} \frac{F_i(x)}{F_0(x)} \right)^{1/\lambda} \\
				&= c(\xi) (1 + \lambda \vartheta(\xi) \cdot \widetilde{F}(x))^{1/\lambda},
			\end{split}
		\end{equation*}
		where $\lambda = \frac{1 - \alpha}{2}$, $\vartheta_i(\xi) = \theta_i(\xi)/ \lambda\theta_0(\xi)$, $\widetilde{F}_i = F_i/F_0$, and $c(\xi)$ is a normalizing constant. Assuming the mapping $\xi \mapsto \vartheta$ is invertible, we see that the $\alpha$-family can be expressed as a $\lambda$-exponential family, where $c(\xi) = e^{-\varphi_{\lambda}(\vartheta(\xi))}$.
	\end{example}

	\section{Mixture-type families under $\lambda$-duality} \label{sec:mixtures}
	The exponential family \eqref{eqn:exponential.family} is not the only family which is naturally associated to the KL-divergence via the Legendre duality. The mixture family plays, in some sense, a dual role \cite[Section 2.3]{a16}. For motivations, we recall that the probability simplex $\Delta^d$ is at the same time an exponential family and a mixture family, and in this case the expectation parameter coincides with the mixture parameter. Nevertheless, the exponential and mixture families are in general distinct objects. In this section we consider mixture-type families under the framework of $\lambda$-duality. We first specialize the $\alpha$-family (Example \ref{eg:alpha.family}) to the mixture case and show that its interpolations are compatible with the geometry of the corresponding $\lambda$-exponential family. Next, we introduce a new {\it $\lambda$-mixture family} which can be regarded as the dual of the $\lambda$-exponential family. We also discuss some relationships between the $\lambda$-exponential and $\lambda$-mixture families.
	
	\subsection{$\alpha$-mixture family} \label{sec:alpha.family}
	Let $p_0(x), p_1(x), \ldots, p_d(x)$ be given (affinely independent) probability densities, with respect to a dominating measure $\nu$, on a given state space. A {\it mixture-type family} may be defined abstractly as a parameterized density $\{p(\cdot ; w)\}_w$, where the mixture parameter $w = (w_0, \ldots, w_d)$ satisfies $w_i \geq 0$, $\sum_i w_i = 1$, such that when $w = e_i = (0, \ldots, 1, \ldots, 0)$ is a vertex of the simplex then $p(\cdot ; w) = p_i$. Geometrically, it is an embedding of the closed simplex $\overline{\Delta^d}$ into the space of all densities, such that the vertices are fixed.
	
	We first consider the $\alpha$-family which can be used to define a mixture-type family. To distinguish it from other families, we call it the {\it $\alpha$-mixture family}. Let $\alpha \neq 1$ be fixed and recall the $\alpha$-embedding function $L_{\alpha}$ defined by \eqref{eqn:alpha.embedding}.
	
	\begin{definition}[$\alpha$-mixture family \cite{A07}] \label{def:alpha.mixture.family}
		The $\alpha$-mixture family with respect to the densities $p_0, \ldots, p_d$ is defined by the $\alpha$-mean, namely
		\begin{equation} \label{eqn:alpha.mixture.family}
			\begin{split}
				p_{\alpha}(x; w) &= c(w) L_{\alpha}^{-1}\left( \sum_{i = 0}^d w_i L_{\alpha}(p_i(x)) \right) \\
				&= c(w) \left( \sum_{i = 0}^d w_i p_i(x)^{(1 - \alpha)/2} \right)^{2/(1 - \alpha)},
			\end{split}
		\end{equation}
		where $c(w)$ is a normalizing constant. Here we assume that the integral which defines $c$ converges.
	\end{definition}
	
	Thus an $\alpha$-mixture family is an $\alpha$-family \eqref{eqn:alpha.family} where each $F_i$ takes the form of a density function $p_i$ and $\theta_0, \ldots, \theta_d$ are the mixture weights. As seen in Example \ref{eg:alpha.family}, the $\alpha$-mixture family can be reparameterized as a $\lambda$-exponential family, and hence a $q$-exponential family, where $\lambda = \frac{1 - \alpha}{2}$ and $q = 1 - \lambda$. In this case there is a more natural way to reparameterize the density. Here we assume that $p_0$ is strictly positive. Let $\vartheta = (\vartheta_1, \ldots, \vartheta_d) = (w_1, \ldots, w_d)$ which takes values in the open convex set $\Omega = \{\vartheta: \vartheta_i > 0, \vartheta_1 + \cdots + \vartheta_d < 1\}$. We have
	\begin{equation} \label{eqn:alpha.mixture.as.lambda}
		\begin{split}
			p_{\alpha}(x; w) &= c(w) \left( \sum_{i = 0}^d w_i p_i(x)^{(1 - \alpha)/2} \right)^{2/(1 - \alpha)} \\
			&= c(w) \left( w_0 (p_0(x))^{\lambda} + \sum_{i = 1}^d w_i (p_i(x))^{\lambda} \right)^{1/\lambda} \\
			&= c(w) p_0(x) \left( 1 + \lambda \sum_{i = 1}^d \vartheta_i \frac{1}{\lambda} \left[ \left( \frac{p_i(x)}{p_0(x)}\right)^{\lambda} - 1 \right] \right)^{1/\lambda} \\
			&= c(w) p_0(x) (1 + \lambda \vartheta \cdot F(x))^{1/\lambda},
		\end{split}
	\end{equation}
	where $F_i(x) = \frac{1}{\lambda} \left[ \left(\frac{p_i(x)}{p_0(x)}\right)^{\lambda} - 1\right]$ is the Box-Cox transformation of the likelihood ratio. So again we get a $\lambda$-exponential family. As a $\lambda$-exponential family, the $\alpha$-mixture family automatically enjoys the properties established in Sections \ref{sec:lambda.exp.family} and \ref{sec:information.geometry} as long as $\lambda < 1$ and the required regularity conditions hold.
	
	
	Amari \cite{A07} showed that the $\alpha$-mixture family can be interpreted as the weighted barycenter with respect to the {\it $\alpha$-divergence}, which we now recall. For $\alpha \neq \pm 1$, the $\alpha$-divergence is defined by
	\begin{equation} \label{eqn:alpha.divergence}
		{\bf D}_{\alpha}[p_1 : p_2] = \frac{4}{1 - \alpha^2} \left(1 - \int p_1^{\frac{1-\alpha}{2}} p_2^{\frac{1 + \alpha}{2}} d\nu \right).
	\end{equation}
	When $\alpha \rightarrow 1$ (respectively $-1$) the $\alpha$-divergence converges to the KL-divergence ${\bf H}(p_1 || p_2)$ (respectively ${\bf H}(p_2 || p_1)$). Note that the $\alpha$-divergence can be expressed as a monotonic transformation of the R\'{e}nyi divergence. Then, by \cite[Theorem 2]{A07}, the $\alpha$-mixture $p_{\alpha}(x; w)$ is the {\it right barycenter} of the densities $p_0, \ldots, p_d$ with respect to the $\alpha$-divergence. Namely, we have
	\begin{equation} \label{eqn:alpha.divergence.barycenter}
		p_{\alpha}(\cdot ; w) = \argmin_p \left\{ \sum_{i = 0}^d w_i {\bf D}_{\alpha}[p_i : p] \right\}.
	\end{equation}
	
	The following result shows that if each density $p_i$ is taken from a base $\lambda$-exponential family $\mathcal{M}$, then the resulting $\alpha$-family (where $\alpha = 1 - 2\lambda$) forms a submanifold (with boundary) of $\mathcal{M}$. We also show that linear interpolations (with respect to $w$) are compatible with the primal coordinates $\vartheta$  of $\mathcal{M}$. This extends and clarifies the analysis of $\alpha$-mixtures between two distributions (called {\it $q$-paths} in \cite{BMBWGSN20}) which adopts subtractive normalization and considers only mixtures of two distributions.

	\begin{proposition} \label{prop:alpha.mix.embed.lambda.exp}
		Consider a $\lambda$-exponential family $\mathcal{M}$ given by
		\[
		p(x; \vartheta) = (1 + \lambda \vartheta \cdot F(x))^{1/\lambda} e^{-\varphi_{\lambda}(\vartheta)},
		\]
		where $1 + \lambda \vartheta \cdot F(x) > 0$ for all $\vartheta$ and $x$. Let $p_i = p(\cdot ; \vartheta^{(i)})$, for $i = 0, 1, \ldots, d$, be $d+1$ members of the family $\mathcal{M}$, each specified by a parameter $\vartheta^{(i)}$.  Let $\alpha = 1 - 2\lambda$. Then the corresponding $\alpha$-mixture family is a subset of $\mathcal{M}$. Specifically,  we have $
		p_{\alpha}(x ; w) = p(x; \vartheta)$, where
		\begin{equation} \label{eqn:alpha.mixture.vartheta}
			\vartheta = \vartheta(w) = \sum_{i = 0}^d \frac{w_i e^{-\lambda \varphi_{\lambda}(\vartheta^{(i)})}}{\sum_j w_j e^{-\lambda \varphi_{\lambda}(\vartheta^{(j)})}} \theta^{(i)}.
		\end{equation}
		In words, given the $d+1$ ``inducing'' elements of $\mathcal{M}$, each $\alpha$-mixture exists and is an element of $\mathcal{M}$. 
	\end{proposition}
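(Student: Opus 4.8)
The plan is a direct computation that reduces the $\alpha$-mean \eqref{eqn:alpha.mixture.family} to the canonical form of a $\lambda$-exponential density, in the spirit of \eqref{eqn:alpha.mixture.as.lambda} but now carrying the normalizing constants along explicitly. Recall that here $\lambda = \tfrac{1-\alpha}{2}$, so the $\alpha$-embedding exponent $\tfrac{1-\alpha}{2}$ equals $\lambda$ and $\tfrac{2}{1-\alpha}$ equals $1/\lambda$. First I would raise each inducing density to the power $\lambda$: the hypothesis $1 + \lambda \vartheta \cdot F(x) > 0$ for all $\vartheta$ and $x$ guarantees that $p_i(x) = (1 + \lambda \vartheta^{(i)} \cdot F(x))^{1/\lambda} e^{-\varphi_{\lambda}(\vartheta^{(i)})} > 0$, so
\[
p_i(x)^{\lambda} = \bigl( 1 + \lambda \, \vartheta^{(i)} \cdot F(x) \bigr) \, e^{-\lambda \varphi_{\lambda}(\vartheta^{(i)})},
\]
with no truncation, uniformly in the sign of $\lambda$.

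Next, writing $A(w) := \sum_{j=0}^d w_j e^{-\lambda \varphi_{\lambda}(\vartheta^{(j)})}$ --- which is strictly positive since the $w_j$ are nonnegative with sum one and each exponential is positive --- linearity of $u \mapsto u \cdot F(x)$ gives
\[
\sum_{i = 0}^d w_i \, p_i(x)^{\lambda} = A(w) \Bigl( 1 + \lambda \, \vartheta(w) \cdot F(x) \Bigr),
\]
where $\vartheta(w)$ is precisely the convex combination \eqref{eqn:alpha.mixture.vartheta}. In particular the left-hand side is positive, so $1 + \lambda \vartheta(w) \cdot F(x) > 0$; and $\vartheta(w) \in \Omega$ since $\vartheta^{(0)}, \ldots, \vartheta^{(d)} \in \Omega$ and $\Omega$ is convex (Lemma \ref{lem:Omega.convex}). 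Raising to the power $1/\lambda$ and substituting into \eqref{eqn:alpha.mixture.family} then yields
\[
p_{\alpha}(x; w) = c(w) \, A(w)^{1/\lambda} \, \bigl( 1 + \lambda \, \vartheta(w) \cdot F(x) \bigr)^{1/\lambda}.
\]

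Thus $p_{\alpha}(\cdot; w)$ and $p(\cdot; \vartheta(w))$ are both probability densities proportional to the single function $x \mapsto (1 + \lambda \vartheta(w) \cdot F(x))^{1/\lambda}$ (which is integrable precisely because $\vartheta(w) \in \Omega$, equivalently because $c(w)$ exists). Two normalized densities that are proportional must be equal; this forces $c(w) A(w)^{1/\lambda} = e^{-\varphi_{\lambda}(\vartheta(w))}$ and $p_{\alpha}(\cdot; w) = p(\cdot; \vartheta(w))$, which is the claim.

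I do not anticipate a genuine obstacle: the computation is essentially that of \eqref{eqn:alpha.mixture.as.lambda} with the densities of $\mathcal{M}$ inserted for the $p_i$. The only points requiring care are (a) verifying $A(w) > 0$ so that $\vartheta(w)$ is well defined and is an honest convex combination; (b) confirming $\vartheta(w) \in \Omega$ together with $1 + \lambda \vartheta(w) \cdot F(x) > 0$, so that the canonical representation of $\mathcal{M}$ applies to $p(\cdot; \vartheta(w))$ without truncation; and (c) the soft normalization argument that proportional densities with unit mass coincide, which simultaneously pins down $c(w)$ in terms of $A(w)$ and $\varphi_{\lambda}(\vartheta(w))$. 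If one prefers not to invoke convexity of $\Omega$ in (b), the membership $\vartheta(w) \in \Omega$ may instead be read off from the standing assumption in Definition \ref{def:alpha.mixture.family} that the integral defining $c(w)$ converges.
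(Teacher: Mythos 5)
Your proposal is correct and follows essentially the same route as the paper's proof: expand $p_i^{\lambda}$ using the positivity of $1+\lambda\vartheta^{(i)}\cdot F$, combine the weighted sum into $A(w)\,(1+\lambda\vartheta(w)\cdot F)$ with $\vartheta(w)$ the convex combination \eqref{eqn:alpha.mixture.vartheta}, invoke Lemma \ref{lem:Omega.convex} for integrability, and conclude by matching normalizations. Your version is slightly more explicit about the constants and about why the two proportional unit-mass densities must coincide, but it is the same argument.
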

	\begin{proof}
		We have
		\begin{equation*}
			\begin{split}
				&\sum_{i = 0}^d w_i p_i^{(1 - \alpha)/2} = \sum_{i = 0}^d w_i p_i^{\lambda} 
				= \sum_{i = 0}^d  w_i (1 + \lambda \vartheta^{(i)} \cdot F) e^{-\lambda \varphi_{\lambda}(\vartheta^{(i)})} \\
				&= c' \left( 1 + \lambda \sum_{i = 0}^d \frac{w_i e^{-\lambda \varphi_{\lambda}(\vartheta^{(i)})}}{\sum_j w_j e^{-\lambda \varphi_{\lambda}(\vartheta^{(j)})}} \theta^{(i)} \cdot F\right) \\
				&= c' (1 + \lambda \vartheta \cdot F),
			\end{split}
		\end{equation*}
		where $c' = c'(w)$ is a constant and $\vartheta$ is given by \eqref{eqn:alpha.mixture.vartheta}. Note that the $(1/\lambda)$-th power of the last expression is integrable since the parameter set of $\mathcal{M}$ is convex by Lemma \ref{lem:Omega.convex}. It follows that $p_{\alpha}(x ; w) = p(x; \vartheta)$.
	\end{proof}
	
	\begin{remark}
		This property is related to the fact that the $\lambda$-exponential family is $\alpha$-affine in the sense of \cite{AN00}. Here, we identified the parameter system $\vartheta$ such that the mapping $w \mapsto \vartheta$ is {\it projective}, i.e., taking straight lines to straight lines. The last property is shown in the following corollary.
	\end{remark}
	
	\begin{corollary} \label{cor:alpha.mixture.curve}
		Under the setting of Proposition \ref{prop:alpha.mix.embed.lambda.exp}, consider a straight line under the mixture parameters $w(t) = (1 - t) w^{(0)} + t w^{(1)}$ of the $\alpha$-family. Then the corresponding probability density functions $\{p_{\alpha}(\cdot ; w(t)) : 0 \leq t \leq 1\}$, which are all members of the $\lambda$-exponential family $\mathcal{M}$, trace out a straight line under the primal coordinate system $\vartheta$.
	\end{corollary}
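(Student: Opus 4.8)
The plan is to run everything through the explicit reparameterization $w \mapsto \vartheta$ furnished by Proposition \ref{prop:alpha.mix.embed.lambda.exp}. Abbreviating $a_i := e^{-\lambda \varphi_{\lambda}(\vartheta^{(i)})} > 0$ for the fixed constants attached to the $d+1$ inducing elements, that proposition gives $p_{\alpha}(\cdot\,; w) = p(\cdot\,; \vartheta(w))$ with
\[
\vartheta(w) = \frac{\sum_{i=0}^d w_i a_i \vartheta^{(i)}}{\sum_{j=0}^d w_j a_j}.
\]
Thus $\vartheta(w)$ is a vector-valued affine function of $w$ divided by a strictly positive scalar affine function of $w$, which is exactly the form of an affine chart of a projective-linear map; this is what makes the collinearity claim plausible, and it is what I would make precise by a direct computation.

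First I would substitute $w(t) = (1-t)w^{(0)} + t w^{(1)}$, which is a legitimate mixture parameter for every $t \in [0,1]$, so by Proposition \ref{prop:alpha.mix.embed.lambda.exp} each $p_{\alpha}(\cdot\,; w(t))$ exists and lies in $\mathcal{M}$. Since each $w_i(t)$ is affine in $t$, so are the numerator vector $N(t) := \sum_i w_i(t) a_i \vartheta^{(i)}$ and the denominator scalar $D(t) := \sum_j w_j(t) a_j$; moreover $D(t) > 0$ on $[0,1]$ because $w(t)$ is a probability vector and the $a_j$ are positive, so no singularity arises and $\vartheta(w(t)) = N(t)/D(t)$ is well-defined throughout. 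The key step is then the elementary identity: writing $N(t) = N(0) + t(N(1)-N(0))$ and $D(t) = D(0) + t(D(1)-D(0))$ and clearing denominators,
\[
\vartheta(w(t)) - \vartheta(w(0)) = \frac{D(0)N(t) - D(t)N(0)}{D(t)D(0)} = \frac{t}{D(t)D(0)}\, v, \qquad v := D(0)N(1) - D(1)N(0),
\]
where $v \in \mathbb{R}^d$ is a fixed vector independent of $t$. Hence every point $\vartheta(w(t))$ lies on the single affine line through $\vartheta(w(0))$ with direction $v$, which is the straight line asserted by the corollary.

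To finish, note that the scalar coefficient $s(t) = t/(D(t)D(0))$ is strictly monotone on $[0,1]$ — indeed $\frac{d}{dt}\frac{t}{D(t)} = \frac{1}{D(t)^2} > 0$ since $D$ is affine with $D(t) - tD'(t) \equiv D(0)$ — so the curve $t \mapsto \vartheta(w(t))$ sweeps out a segment of that line monotonically, i.e.\ it is a straight line \emph{up to a time reparameterization}, consistent with the interpretation recorded in Remark \ref{rmk:primal.dual.coordinates}. There is no real obstacle here: the only content is the standard fact that a curve of the form (affine vector)/(affine scalar) has collinear image, and the only points needing a word of care are the non-vanishing of $D(t)$ (so the curve does not blow up) and the observation that the induced parameter $s(t)$ is generally not affine in $t$. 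A slicker, coordinate-free alternative would be to note that $w \mapsto \vartheta$ factors as the linear map $w \mapsto (w_0 a_0, \dots, w_d a_d)$, followed by passage to the projective equivalence class, followed by the affine chart sending a projective point to the corresponding barycentric combination of the $\vartheta^{(i)}$; each of these sends lines to lines, hence so does the composition.
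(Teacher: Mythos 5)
Your proof is correct and follows essentially the same route as the paper's: both substitute the line $w(t)$ into the explicit reparameterization $\vartheta(w)$ from Proposition \ref{prop:alpha.mix.embed.lambda.exp} and observe that a ratio of affine functions of $t$ traces a straight line up to a monotone time change (the paper organizes this as $\vartheta(t) = (1-s(t))\vartheta(0) + s(t)\vartheta(1)$, you as $\vartheta(t)-\vartheta(0) = \tfrac{t}{D(t)D(0)}\,v$, which is the same computation). The only nit is that $\tfrac{d}{dt}\tfrac{t}{D(t)} = \tfrac{D(0)}{D(t)^2}$ rather than $\tfrac{1}{D(t)^2}$, which changes nothing since $D(0)>0$.
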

	\begin{proof}
		Let $\vartheta(t)$, given by \eqref{eqn:alpha.mixture.vartheta}, be the $\vartheta$-coordinate of $p_{\alpha}(\cdot; w(t))$ as an element of $\mathcal{M}$. We have
		\begin{equation*}
			\begin{split}
				\vartheta(t) = \sum_{i = 0}^d \frac{((1 - t) w_i^{(0)} + t w_i^{(1)})a_i }{\sum_j ((1 - t) w_j^{(0)} + t w_j^{(1)}) a_j} \theta^{(i)},
			\end{split}
		\end{equation*}
		where $a_i = e^{-\lambda \varphi_{\lambda}(\theta^{(i)})}$.
		
		Consider the weight of the $i$-term. Write
		\begin{equation} \label{eqn:weight.analysis}
			\begin{split}
				&\frac{((1 - t) w_i^{(0)} + t w_i^{(1)})a_i }{\sum_j ((1 - t) w_j^{(0)} + t w_j^{(1)}) a_j} \\
				&= \frac{(1 - t) \sum_j w_j^{(0)} a_j}{\sum_j [ \cdots ] a_j} \frac{w_i^{(0)} a_i}{\sum_j w_j^{(0)} a_j} + \frac{t \sum_j w_j^{(1)} a_j}{\sum_j [ \cdots ] a_j} \frac{w_i^{(1)} a_i}{\sum_j w_j^{(0)} a_j} \\
				&= (1 - s(t)) \frac{w_i^{(0)} a_i}{\sum_j w_j^{(0)} a_j} + s(t) \frac{w_i^{(1)} a_i}{\sum_j w_j^{(0)} a_j},
			\end{split}
		\end{equation}
		where $[\cdots] = (1 - t) w_j^{(0)} + t w_j^{(1)}$. Note that $s(t)$ does not depend on $i$ and increases from $0$ to $1$. Thus we have
		\[
		\vartheta(t) = (1 - s(t)) \vartheta(0) + s(t) \vartheta(1),
		\]
		which is a time change of a constant-speed straight line under the $\vartheta$ coordinates. 
	\end{proof}
	
	According to the geometric language explained in Section \ref{sec:information.geometry} (also see Remark \ref{rmk:primal.dual.coordinates}), the path $\{p_{\alpha}(\cdot ; w(t)) : 0 \leq t \leq 1\}$ is a {\it primal pre-geodesic} under the information geometry induced by the $\lambda$-logarithmic divergence ${\bf L}_{\lambda, \varphi_{\lambda}}$ on $\mathcal{M}$. Indeed, equations analogous to \eqref{eqn:weight.analysis} also appear naturally in \cite{PW18,W18}, and the time change is related to the fact that the underlying geometry is {\it projectively flat}.  
	
	\begin{figure}[!t]
		\centering
		\includegraphics[width=4in]{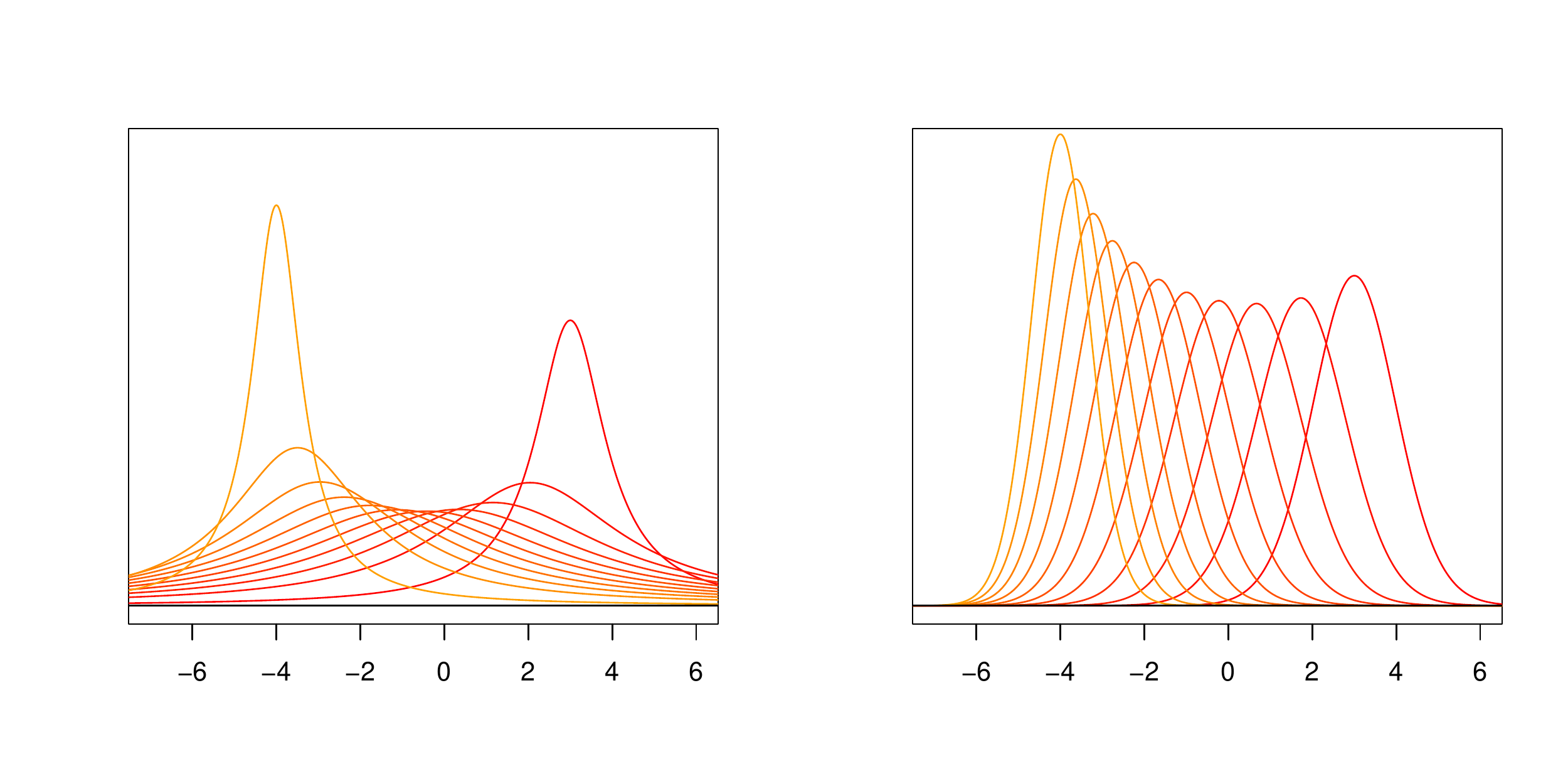}
		\vspace{-0.7cm}
		\caption{The $\alpha$-family formed by two $t$-distributions (with different location and scale parameters) on the real line (see Example \ref{eg:t.dist}). Left: df (degree of freedom) $= 3$, so that $\alpha = 0.5$. Right: $\text{df} = 30$, so that $\alpha \approx 1.13$. In both bases we use the location and scale parameters $(\mu_0, \sigma_0) = (-4, 0.7)$, $(\mu_1, \sigma_1) = (3, 1)$ for $p_0$ and $p_1$. The interpolation corresponds to a straight line under the $\vartheta$-coordinate system of the associated $\lambda$-exponential family (see Corollary \ref{cor:alpha.mixture.curve}), but is a nonlinear curve under the $\theta$-coordinate system of the corresponding $q$-exponential family. As $\text{df} \rightarrow \infty$ (so $\alpha \rightarrow 1$) the $\alpha$-family converges to the {\it exponential mixture} which is analogous to (but different from) McCann's displacement interpolation \cite{M97} between two Gaussian distributions.}
		\label{fig:t.mixture}
	\end{figure}
	
	\begin{example}[Multivariate $t$-distribution] \label{eg:t.dist}
		Let the state space be $\mathbb{R}^n$. The multivariate $t$-distribution with $k$ degrees of freedom, and with parameters $(\mu, \Sigma)$ (where $\mu \in \mathbb{R}^n$ and $\Sigma$ is a $n\times n$ positive definite matrix) has density (with respect to the Lebesgue measure) given by
		\begin{equation} \label{eqn:MV.t.distribution}
			p(x ; k, \mu, \Sigma) = c \left[1 + \frac{1}{k} (x - \mu)^{\top} \Sigma^{-1} (x - \mu) \right]^{-(k + n)/2},
		\end{equation}
		where $c = c(\mu, \Sigma) > 0$ is a normalization constant. It can be shown that for $k$ fixed this can be reparameterized as a $\lambda$-exponential family where $\lambda = \frac{-2}{k + n} < 0$ (this generalizes Examples \ref{ex:Cauchy} and \ref{ex:q.Guassian}). Thus we may write
		\begin{equation} \label{eqn:t.as.q.exp}
			p(x; k, \mu, \Sigma) = p(x ; \vartheta) = (1 + \lambda \vartheta \cdot F(x))^{\frac{1}{\lambda}} e^{-\varphi_{\lambda}(\vartheta)},
		\end{equation}
		where $F(x) = ( (x_i)_i, (x_ix_j)_{i \leq j})$. Note that as $k \rightarrow \infty$ it converges to the Gaussian distribution $N(\mu, \Sigma)$. 
		
		Let $p_i = p(\cdot ; k, \mu^{(i)}, \Sigma^{(i)})$ for $i = 0, 1, \ldots, d$ and consider the $\alpha$-mixture family where $\alpha = 1 - 2\lambda$. By Proposition \ref{prop:alpha.mix.embed.lambda.exp}, each $\alpha$-mixture belongs to the same family, and hence is a $t$-distribution. This fact may appear strange at first sight as the ordinary mixture (of unimodal distributions) is typically multimodal. However, as mentioned above, the $\alpha$-mixture \eqref{eqn:alpha.family} may also be interpreted as the barycenter with respect to the $\alpha$-divergence \cite{A07}. Here, the $\alpha$-mixture is analogous to  the Wasserstein barycenter in optimal transport theory \cite{AC11}, where it can be shown that the Wasserstein barycenter of Gaussian distributions is also Gaussian. In Figure \ref{fig:t.mixture} (inspired by \cite{BMBWGSN20}) we give a graphical illustration of the $\alpha$-mixture of two $t$-distributions on the real line. We remark that here $\alpha$ is chosen as a function of the degrees of freedom $k$. 
	\end{example}

	\subsection{$\lambda$-mixture family} \label{sec:lambda.mixture}
	In \eqref{eqn:alpha.mixture.as.lambda}, where we express the $\alpha$-mixture family as a $\lambda$-exponential family, the mixture parameter $w$ plays the role of the primal variable $\vartheta$. However, as mentioned in Section \ref{sec:Legendre.duality}, the negative Shannon entropy of a (conventional) mixture family is convex in the mixture parameter which plays the role of the dual variable, and its Bregman divergence is the KL-divergence. In this subsection we introduce a new $\lambda$-mixture family which preserves this analogy. To motivate the definition, recall Example \ref{eg:Renyi.simplex} where the negative R\'{e}nyi entropy of the escort distribution is $c_{\lambda}$-convex on the simplex. Recall also that $\mathcal{E}_q[\cdot]$ is the escort transformation with exponent $q$ (see \eqref{eqn:escort.transformation.def}):
	
	\begin{definition} [$\lambda$-mixture family] \label{def:lambda.mixture}
		Let $\lambda \neq 1$ be given and let $q = 1 - \lambda$. We define the $\lambda$-mixture family with respect to the densities $p_0, \ldots, p_d$ by  
		\begin{equation} \label{eqn:lambda.mixture.density}
			p_{\eta}(x) = p(x ; \eta) = \mathcal{E}_{1/q}\left[ \sum_{i = 0}^d \eta_i \mathcal{E}_{q} [p_i] \right],
		\end{equation}
		where $\eta_i \geq 0$, $\sum_{i = 0}^d \eta_i = 1$ are the $\lambda$-mixture parameters, provided that the integrals involved all converge. Explicitly, we have
		\begin{equation} \label{eqn:lambda.mixture.density2}
		p_{\eta}(x) = \frac{1}{\int (\sum_{j = 0}^d \eta_j \widetilde{p}_j )^{1/q} \mathrm{d} \nu} \left( \sum_{i = 0}^d \eta_i \widetilde{p}_i \right)^{1/q},
	\end{equation}
		where $\widetilde{p}_i = \mathcal{E}_q[p_i] = p_i^q / \int p_i^q \mathrm{d}\nu$. 
	\end{definition}
	
	From \eqref{eqn:lambda.mixture.density}, if $\widetilde{p}_{\eta} = \mathcal{E}_q[p_{\eta}]$ is the escort transformation of $p_{\eta}$, then
	\[
	\widetilde{p}_{\eta} = \sum_{i = 0}^d \eta_i \widetilde{p}_i
	\]
	is an ordinary mixture family with respect to the escort distributions $\widetilde{p}_0, \ldots, \widetilde{p}_d$. Thus the $\lambda$-mixture family is nothing but the escort transformation (with exponent $1/q$) of an ordinary mixture family. See Section \ref{sec:further.relations} for the relationship between the $\lambda$-mixture and $\alpha$-mixture families.
	
	Before proceeding we illustrate the $\lambda$-mixture family with a concrete example. 
	
	\begin{example} \label{eg:mixture}
		Consider the finite state space $\mathcal{X} = \{0, 1, 2\}$ and let $\nu$ be the counting measure. Consider for the sake of illustration the three densities
		\begin{equation*}
			\begin{split}
				p_0 &= (0.8, 0.1, 0.1), \\
				p_1 &= (0.02, 0.9, 0.08), \\
				p_2 &= (0.35, 0.2, 0.45).
			\end{split}
		\end{equation*}
		In Figure \ref{fig:mixture} we visualize the corresponding $\lambda$-mixture family where $\lambda = -2$ (left) and $\lambda = 0.7$ (right). Each density $p(\cdot ; \eta)$, where $\eta$ ranges over a uniform grid on the unit simplex, is shown as a blue dot on the unit simplex $\Delta^2$.
		
		\begin{figure}[!t]
			\centering
			\includegraphics[width=1.73in]{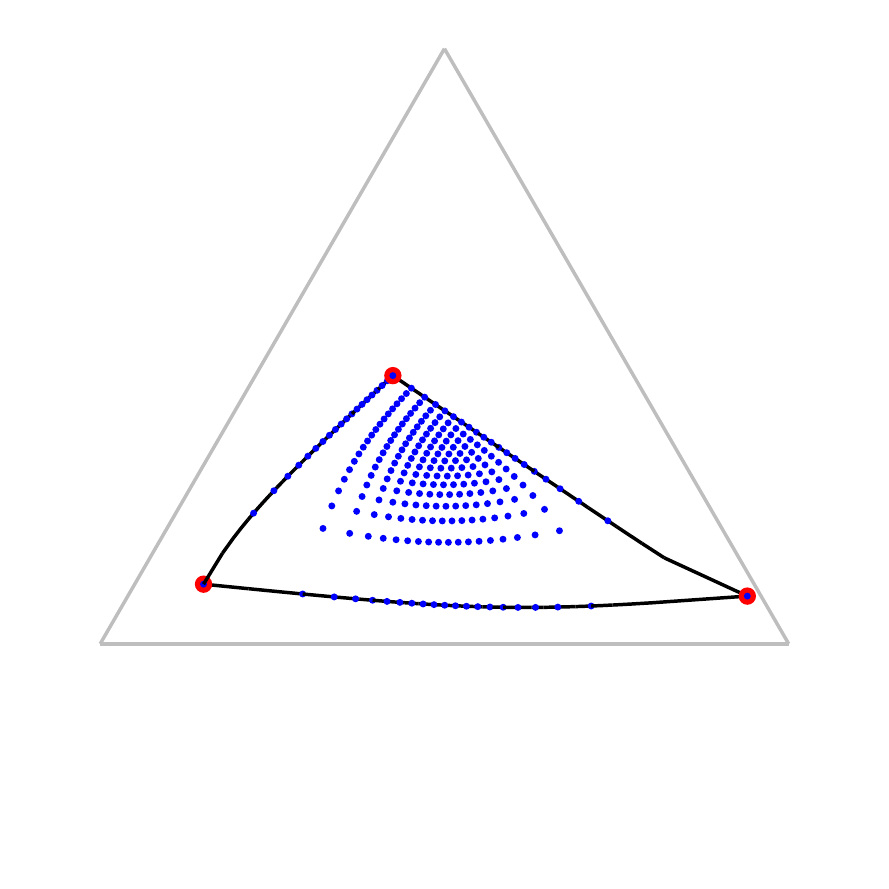}
			\hspace{-0.2cm}
			\includegraphics[width=1.73in]{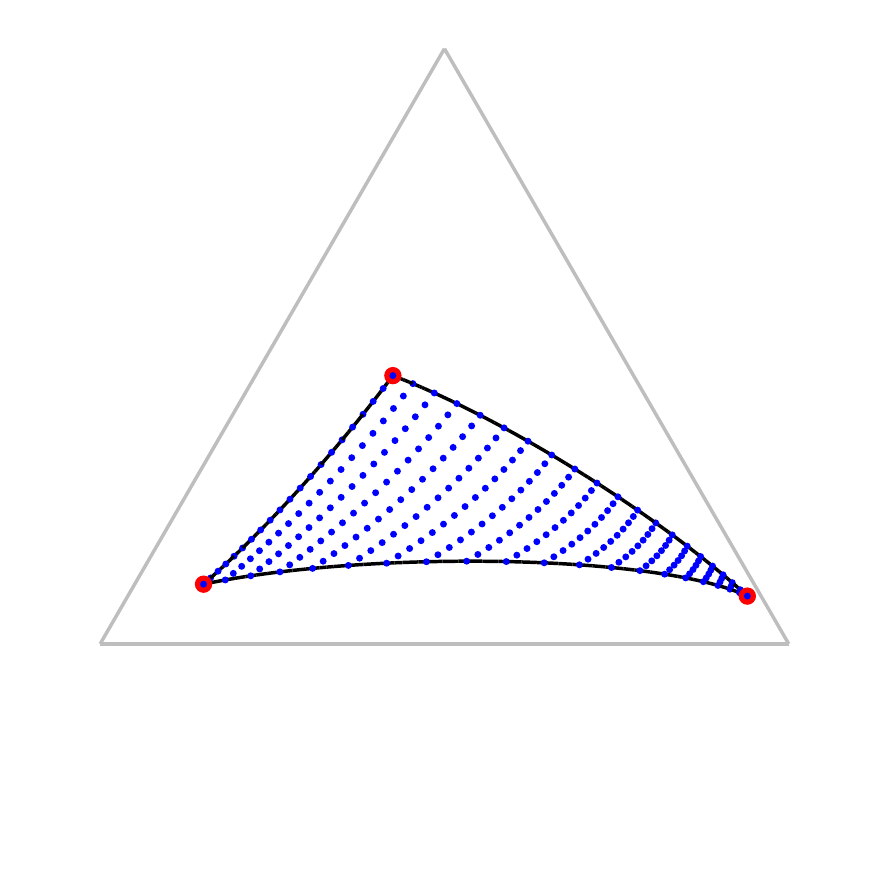}
			\vspace{-1.2cm}
			\caption{Graphical illustration of the $\lambda$-mixture family in the context of Example \ref{eg:mixture}. Left: $\lambda = -2$. Right: $\lambda = 0.7$. The red points show the densities $p_i$, $i = 0, 1, 2$. Each $p(\cdot ; \eta)$, where $\eta$ ranges over a uniform grid on the closed simplex $\overline{\Delta^2}$, is shown as a blue dot in $\Delta^2$ (where the superscript denotes the dimension of the simplex $\Delta$.}
			\label{fig:mixture}
		\end{figure}
		
		Observe that the $\lambda$-mixture family leads to nonlinear interpolating curves on the simplex. For example, when $\lambda < 0$ the blue dots are denser when $\eta$ is near the center of the simplex, and are far apart when $\eta$ is close to the boundary. When $\lambda \rightarrow 0$ we cover the ordinary mixture family \eqref{eqn:mixture.family}.
	\end{example}
	
	The following result shows that the $\lambda$-mixture family is naturally compatible with the $\lambda$-duality, and preserves the analogy with the mixture family (see \eqref{eqn:mixture.family.Bregman}).
	
	\begin{theorem} \label{thm:lambda.mixture}
		Consider a $\lambda$-mixture family where $\lambda < 1$ (or $q = 1 - \lambda > 0$) (this assumes implicitly that all integrals involved in \eqref{eqn:lambda.mixture.density} converge for all $\eta$), such that the integral $\int (\sum_{i = 0}^d \eta_i \widetilde{p}_i )^{1/q} \mathrm{d}\nu$ can be differentiated with respect to $\eta$ under the integral sign when $\eta \in \Delta^d$. Consider the negative R\'{e}nyi entropy
		\begin{equation} \label{eqn:lambda.mixture.dual.potential}
			\psi(\eta) = - {\bf H}^{\mathrm{R\acute{e}nyi}}_q ( p_{\eta}), \quad q = 1 - \lambda.
		\end{equation}
		Then $\frac{1}{\lambda} (e^{\lambda \psi} - 1)$ is convex. Moreover, the $\lambda$-logarithmic divergence of $\psi$ is the R\'{e}nyi divergence with order $q$: 
		\begin{equation} \label{eqn:lambda.mixture.L.div}
			{\bf L}_{\lambda, \psi}[\eta : \eta'] = {\bf H}^{\mathrm{R\acute{e}nyi}}_q ( p_{\eta} || p_{\eta'}), \quad \eta' \in \Delta^d.
		\end{equation}
	\end{theorem}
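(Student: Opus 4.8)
The plan is to reduce everything to the ordinary (Shannon) mixture theory via the escort transformation, which by Definition \ref{def:lambda.mixture} is exactly what the $\lambda$-mixture family is built from. Write $\widetilde{p}_\eta = \mathcal{E}_q[p_\eta] = \sum_{i=0}^d \eta_i \widetilde{p}_i$, an ordinary mixture family in the escort coordinates. First I would express the potential $\psi(\eta) = -{\bf H}^{\mathrm{R\acute{e}nyi}}_q(p_\eta)$ in terms of $\widetilde p_\eta$. Using $\widetilde p_\eta \propto p_\eta^q$ and the normalization in \eqref{eqn:lambda.mixture.density2}, a direct computation shows that $\int p_\eta^q\,\mathrm{d}\nu$ is a simple power of the normalizing constant $Z(\eta) := \int(\sum_j \eta_j \widetilde p_j)^{1/q}\,\mathrm{d}\nu$; consequently $\psi(\eta)$ equals a constant multiple of $\log Z(\eta)$ up to a sign. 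The key identity to pin down is that $e^{\lambda\psi(\eta)} = Z(\eta)^{c}$ for an explicit constant $c$ (I expect $c = \lambda/q \cdot (\text{something})$, to be computed), so that $\frac{1}{\lambda}(e^{\lambda\psi}-1)$ is, up to an affine change, $\frac{1}{\lambda}(Z(\eta)^{c} - 1)$.

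Next I would establish convexity of $\frac1\lambda(e^{\lambda\psi}-1)$. The cleanest route is to recall from the excerpt (end of Section \ref{sec:lambda.log.divergence}) that $\frac1\lambda(e^{\lambda f}-1)$ is convex if and only if the right-hand side of \eqref{eqn:lambda.logdivergence} is non-negative for all $u,u'$ in the domain; so it suffices to prove \eqref{eqn:lambda.mixture.L.div} directly and check non-negativity of the R\'enyi divergence (which is classical). Alternatively, and perhaps more transparently, I would note that $\eta \mapsto \widetilde p_\eta$ is affine, and for $q>0$ the map $t \mapsto t^{1/q}$ composed appropriately makes $Z(\eta)$ a concave function of $\eta$ when $1/q \le 1$ (i.e. $q\ge 1$) and one must handle $0<q<1$ separately using the power-mean / Minkowski-type inequality; in either regime the sign of $\lambda = 1-q$ matches the required direction of convexity of $\frac1\lambda(Z^c-1)$. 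I would carry out this sign bookkeeping carefully, since it is the place where the hypothesis $\lambda < 1$ is genuinely used.

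For the divergence formula \eqref{eqn:lambda.mixture.L.div}, I would compute ${\bf L}_{\lambda,\psi}[\eta:\eta']$ from the definition \eqref{eqn:lambda.logdivergence}, which requires $\nabla\psi(\eta')$. Differentiating $\psi$ under the integral sign (permitted by hypothesis) gives $\nabla\psi(\eta')$ as an integral of $\widetilde p_i/\widetilde p_{\eta'}$-type ratios weighted by $p_{\eta'}^q$; plugging into \eqref{eqn:lambda.logdivergence} and using $\sum_i(\eta_i-\eta_i')(\cdots)$ together with the normalization should collapse the expression. The target is to recognize $f(\eta)-f(\eta') - \frac1\lambda\log(1+\lambda\nabla f(\eta')\cdot(\eta-\eta'))$ as $\frac{1}{q-1}\log\int p_\eta^q p_{\eta'}^{1-q}\,\mathrm d\nu$. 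The mechanism is the same linearity trick used in the proof of Theorem \ref{thm:Renyi.entropy.maximize} and in Example \ref{eg:Renyi.simplex}: because $p_{\eta'}^{1-q}$ is an \emph{affine} function of the escort coordinates (it equals $(1+\lambda\vartheta'\cdot F)$-type up to a constant, here with $F$ replaced by the $\widetilde p_i$), the logarithm of the pairing integral splits exactly into the pieces appearing in ${\bf L}_{\lambda,\psi}$.

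The main obstacle I anticipate is the bookkeeping that ties together three normalizing constants — the one in \eqref{eqn:lambda.mixture.density2}, the escort normalizations $\int p_i^q\,\mathrm d\nu$ hidden in each $\widetilde p_i$, and the exponent $1/q$ — and getting the exponent $c$ in $e^{\lambda\psi}=Z^c$ exactly right, because a single sign or reciprocal error flips convexity into concavity. A secondary subtlety is that, unlike in Theorem \ref{thm:c.exp.potential}, there is no a priori ``$1-\lambda\nabla\psi\cdot\eta>0$'' clause stated here, so I should either verify regularity of $\psi$ as a $c_\lambda$-convex function on $\Delta^d$ directly (as sketched for the negative R\'enyi entropy in Example \ref{eg:Renyi.simplex}, which is the special case $p_i=e_i$), or observe that only convexity of $\frac1\lambda(e^{\lambda\psi}-1)$ and the divergence identity are actually being asserted, so the weaker statement suffices and the regularity discussion can be deferred. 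I expect the proof to be short once the escort reduction is set up, essentially a two-line application of the corresponding facts for the ordinary mixture family combined with \eqref{eqn:Tsallis.Renyi}-style algebra.
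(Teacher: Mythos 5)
Your proposal is correct and follows essentially the same route as the paper's proof: write $\psi(\eta)=\frac{q}{1-q}\log Z(\eta)$ with $Z(\eta)=\int(\sum_j\eta_j\widetilde p_j)^{1/q}\,\mathrm{d}\nu$, compute $1+\lambda\nabla\psi(\eta')\cdot(\eta-\eta')$ by differentiating under the integral sign to identify ${\bf L}_{\lambda,\psi}[\eta:\eta']$ with the R\'enyi divergence, and then deduce convexity of $\frac{1}{\lambda}(e^{\lambda\psi}-1)$ from non-negativity of that divergence via the reversal remark at the end of Section~\ref{sec:lambda.log.divergence}. The only small slip is that the relevant affine dependence is of $p_\eta^q\propto\sum_i\eta_i\widetilde p_i$ in $\eta$ (not of $p_{\eta'}^{1-q}$), but this does not affect the argument.
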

	\begin{proof}
		Let $Z(\eta) = \int (\sum_j \eta_j \widetilde{p}_j)^{1/q} \mathrm{d}\nu$ be the partition function in \eqref{eqn:lambda.mixture.density2}, so that
		\[
		p(x ; \eta) = \frac{1}{Z(\eta)} \left( \sum_{j = 0}^d \eta_j \widetilde{p}_j \right)^{1/q}.
		\]
		By \eqref{eqn:lambda.mixture.density} and the definition of R\'{e}nyi entropy  \eqref{eqn:Renyi.entropy}, we have
		\begin{equation} \label{eqn:psi.expression}
			\psi(\eta) = \frac{q}{1 - q} \log \int \left( \sum_{i = 0}^d \eta_i \widetilde{p}_i \right)^{1/q} \mathrm{d}\nu = \frac{q}{1 - q} \log Z(\eta).
		\end{equation}
		
		Let $\eta, \eta'$ be given. By a direct computation, we have
		\begin{equation*}
			\begin{split}
				1 + \lambda \nabla \psi(\eta') \cdot (\eta - \eta')
				&= \frac{ \int (\sum_i \eta_i' \widetilde{p}_i)^{1/q - 1} (\sum_i \eta_i \widetilde{p}_i) \mathrm{d}\nu}{\int (\sum_i \eta'_i \widetilde{p}_i )^{1/q} \mathrm{d}\nu}\\
				&= \frac{Z(\eta)^q}{Z(\eta')^{q}} \int p_{\eta}(x)^q p_{\eta'}(x)^{1 - q}  \mathrm{d}\nu.\\
			\end{split}
		\end{equation*}
		It follows that
		\begin{equation*}
			\begin{split}
				&\psi(\eta) - \psi(\eta') - \frac{1}{\lambda} \log (1 + \lambda \nabla \psi(\eta') \cdot (\eta - \eta'))\\ 
				&= \frac{1}{q - 1} \log \int p_{\eta}(x)^q p_{\eta'}(x)^{1 - q} \mathrm{d}\nu(x) \\
				&= {\bf H}^{\mathrm{R\acute{e}nyi}}_q ( p_{\eta} || p_{\eta'} ) \geq 0.
			\end{split}
		\end{equation*}
		This argument shows that $\frac{1}{\lambda} (e^{\lambda \psi} - 1)$ is convex whenever $q = 1 - \lambda > 0$, and the $\lambda$-logarithmic divergence of $\psi$ is the R\'{e}nyi divergence of order $q$.
	\end{proof}
	
	\begin{remark}
		It is easy to verify by examples that Theorem \ref{thm:lambda.mixture} does not hold for the $\alpha$-mixture family if we define instead $\psi(w) = - {\bf H}^{\mathrm{R\acute{e}nyi}}_q ( p_{\alpha}(\cdot ; w))$. For an $\alpha$-mixture family (which can be expressed as a $\lambda$-exponential family), it is the $c_{\lambda}$-conjugate $\psi_{\lambda}$ of the divisive $\lambda$-potential $\varphi_{\lambda}$ which can be expressed as a R\'{e}nyi entropy (see Theorem \ref{thm:Renyi.divergence}).
	\end{remark}

	
	Now we place Example \ref{eg:Renyi.simplex}  in the proper context.
	
	\begin{example} [Finite simplex] \label{eg:lambda.mixture.simplex}
		Consider the unit simplex $\Delta^d$ as in \eqref{eqn:unit.simplex}. Each $u \in \Delta^d$ can be regarded as the density $p$ of a probability measure on a finite set $\mathcal{X} = \{0, 1, \ldots, d\}$ with respect to the counting measure. So we may view $\Delta^d \cong \mathcal{P}_+(\mathcal{X})$ as the set of strictly positive probability measures on $\mathcal{X}$. We write $u_x = p(x)$, $x \in \mathcal{X}$.
		
		For $\lambda \neq 0$, $\Delta^d$ can be expressed as a $\lambda$-exponential family. To see this, let $F_i(x) = \delta_i(x)$ be the indicator of $i \in \mathcal{X}$, $i = 1, \ldots, d$. Then we may write
		\begin{equation} \label{eqn:simplex.as.lambda.exp}
			u_x = p_{\vartheta}(x) = \left(1 + \lambda \sum_{i = 1}^d \vartheta_i \delta_i(x)\right)^{1/\lambda} e^{-\varphi_{\lambda}(\vartheta)},
		\end{equation}
		where
		\begin{equation} \label{eqn:simplex.vartheta}
			\vartheta_i = \frac{1}{\lambda} \left[ \left( \frac{u_i}{u_0} \right)^{\lambda} - 1\right]
		\end{equation}
		is the Box-Cox transformation of $u_i/u_0$, and the parameter set is $\Omega = \{\vartheta \in \mathbb{R}^d: 1 + \lambda \vartheta^i > 0 \ \forall i \}$. Note that when $\lambda \rightarrow 0$, $\vartheta$ reduces to the usual exponential coordinates $\theta_i = \log (u_i/u_0)$. The potential function, given by
		\[
		\varphi_{\lambda}(\vartheta) = -\log u_0 = \log \left( 1 + \sum_{i= 1}^d (1 + \lambda \vartheta_i)^{1/\lambda}\right),
		\]
		is $c_{\lambda}$-convex for $\lambda < 1$. By Theorem \ref{thm:c.exp.dual.variable}, the dual variable $\eta = \nabla^{c_{\lambda}} \varphi_{\lambda} (\vartheta)$ gives the escort distribution with exponent $q$:
		\begin{equation} \label{eqn:escort.probability}
			\eta_i = \frac{(p_{\vartheta}(i))^q}{\sum_{j = 0}^d  (p_{\vartheta}(j))^q }, \quad i = 1, \ldots, d.
		\end{equation}
		We define $\eta_0 = 1 - \sum_{i = 1}^d \eta_i$. Dually, for $i = 0, 1, \ldots, d$, let $p_i(x) = \delta_i(x)$ be the density of the point mass at point $i$ and note that $\widetilde{p}_i = \mathcal{E}_q[p_i] = p_i$. Consider the corresponding $\lambda$-mixture family $p_{\eta}$ (not to be confused with $p_{\vartheta}$). By \eqref{eqn:lambda.mixture.density}, we have
		\begin{equation} \label{eqn:simplex.as.lambda.mixture}
			p_{\eta}(i) = \frac{\eta_i^{1/q}}{\sum_{j = 0}^d \eta_j^{1/q}}.
		\end{equation}
		Note that if $\eta$ is given by \eqref{eqn:escort.probability}, then we have $p_{\vartheta} = p_{\eta}$. Thus the simplex is at the same time a $\lambda$-exponential family and a $\lambda$-mixture family, and the mixture parameter of the $\lambda$-mixture family is the dual parameter of the $\lambda$-exponential family. That the unit simplex can be expressed as a generalized exponential family (including the $q$-exponential family) was observed in earlier works such as \cite{AOM12}. Here, we use the $\lambda$-representation and connect it with the $\lambda$-mixture family. 
	\end{example}
	
	Formula \eqref{eqn:simplex.vartheta}, which gives the ``primal'' variable (if we regard $\eta$ as the dual one), can be extended to a general $\lambda$-mixture family. Since the set $\{\eta \in (0, 1)^{1 + d}  : \eta_0 + \cdots + \eta_d = 1\}$ is not open, we consider instead the open domain
	\[
	\Omega' = \{ \bar{\eta} = (\bar{\eta}_1, \ldots, \bar{\eta}_d) \in (0, 1)^d : \bar{\eta}_1 + \cdots + \bar{\eta}_d < 1\},
	\]
	where the notation $\bar{\eta}$ signifies that the $0$-th coordinate is dropped. Consider the negative R\'{e}nyi entropy $\psi$, given by \eqref{eqn:lambda.mixture.dual.potential}, as a function of $\bar{\eta}$. Note that 
	\begin{equation} \label{eqn:derivative.eta.bar}
		\frac{\partial \psi}{\partial \bar{\eta}_i} = \frac{\partial \psi}{\partial \eta_i} - \frac{\partial \psi}{\partial \eta_0}, \quad i = 1, \ldots, d.
	\end{equation}
	Note that $\frac{1}{\lambda} (e^{\lambda \psi} - 1)$ is convex in $\bar{\eta}$ (which is a linear transformation of $\eta$) and so we may consider the $\lambda$-duality. One can verify that in the context of Example \ref{eg:lambda.mixture.simplex} the formula \eqref{eqn:mixture.family.theta} below reduces to \eqref{eqn:simplex.vartheta}.
	
	\begin{proposition} \label{prop:lambda.mixture.primal.variable}
		With the above notations, consider the primal variable $\bar{\vartheta} = \nabla_{\bar{\eta}}^{c_{\lambda}} \psi(\bar{\eta})$. We have
		\begin{equation} \label{eqn:mixture.family.theta}
			\bar{\vartheta}^i = \frac{1}{\lambda} \left[ \frac{\int p(x ; \eta)^{\lambda} \widetilde{p}_i(x) d\nu}{\int p(x ; \eta)^{\lambda} \widetilde{p}_0(x) d\nu} - 1\right], \quad i = 1, \ldots, d.
		\end{equation}
	\end{proposition}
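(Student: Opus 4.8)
The plan is to evaluate the two ingredients of the $\lambda$-gradient formula \eqref{eqn:c.Legendre.transform}, namely the ordinary gradient $\nabla_{\bar\eta}\psi$ and the scalar $1 - \lambda\,\nabla_{\bar\eta}\psi(\bar\eta)\cdot\bar\eta$, and then assemble them. Throughout I write $m(x) = \sum_{i=0}^d \eta_i \widetilde{p}_i(x)$ and $Z(\eta) = \int m^{1/q}\,\mathrm{d}\nu$, so that $p(x;\eta) = m(x)^{1/q}/Z(\eta)$ as in \eqref{eqn:lambda.mixture.density2}, and by \eqref{eqn:psi.expression} we have $\psi = \frac{q}{1-q}\log Z = \frac{q}{\lambda}\log Z$. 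The one identity that drives the whole computation is $m^{1/q - 1} = Z^{\lambda}\, p(\cdot\,;\eta)^{\lambda}$, which follows from $1/q - 1 = \lambda/q$ together with $m = Z^q p^q$; it lets us convert powers of $m$ into powers of the $\lambda$-mixture density itself. It is convenient to abbreviate $a_i := \int p(x;\eta)^{\lambda}\, \widetilde{p}_i(x)\,\mathrm{d}\nu$ for $i = 0,1,\dots,d$.

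First I would differentiate $\psi$ under the integral sign, which is licensed by the standing hypothesis of Theorem \ref{thm:lambda.mixture}. Since $\partial_{\eta_i} Z = \tfrac1q\int m^{1/q-1}\widetilde{p}_i\,\mathrm{d}\nu = \tfrac{Z^{\lambda}}{q}\,a_i$, we obtain $\partial_{\eta_i}\psi = \tfrac{q}{\lambda}\cdot\tfrac{1}{Z}\cdot\tfrac{Z^{\lambda}}{q}\,a_i = \tfrac{1}{\lambda}\,Z^{\lambda-1}\,a_i$ (the factor $q$ cancels). Passing to the reduced coordinate via \eqref{eqn:derivative.eta.bar} then gives $\partial_{\bar\eta_i}\psi = \tfrac{1}{\lambda}\,Z^{\lambda-1}(a_i - a_0)$ for $i = 1,\dots,d$, which is already the numerator of \eqref{eqn:mixture.family.theta} up to the normalization.

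Next I would compute the normalization constant. Using $\sum_{i=1}^d \eta_i = 1 - \eta_0$, one rewrites $\nabla_{\bar\eta}\psi\cdot\bar\eta = \tfrac{1}{\lambda}\,Z^{\lambda-1}\bigl(\sum_{i=0}^d \eta_i a_i - a_0\bigr)$; here the key step is the evaluation $\sum_{i=0}^d \eta_i a_i = \int p(\cdot;\eta)^{\lambda}\, m\,\mathrm{d}\nu = Z^q \int p(\cdot;\eta)^{\lambda + q}\,\mathrm{d}\nu = Z^q$, which uses $m = Z^q p^q$, the relation $\lambda + q = 1$, and the fact that $p(\cdot;\eta)$ is a probability density. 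Consequently $1 - \lambda\,\nabla_{\bar\eta}\psi\cdot\bar\eta = 1 - Z^{\lambda-1}(Z^q - a_0) = Z^{\lambda-1}a_0$, the telescoping being precisely the cancellation $Z^{\lambda-1}\cdot Z^q = Z^{\lambda+q-1} = 1$. Substituting into \eqref{eqn:c.Legendre.transform} yields $\bar\vartheta^i = \partial_{\bar\eta_i}\psi\big/\bigl(1 - \lambda\,\nabla_{\bar\eta}\psi\cdot\bar\eta\bigr) = \tfrac{1}{\lambda}(a_i/a_0 - 1)$, which is exactly \eqref{eqn:mixture.family.theta}.

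This argument is in essence careful bookkeeping of the exponents; I do not anticipate a genuine obstacle, but the step that most needs care is tracking the powers of $Z$ so that the normalization identity telescopes to $Z^{\lambda-1}a_0$ (and, incidentally, is manifestly positive, as the $\lambda$-gradient requires). As a consistency check, in the setting of Example \ref{eg:lambda.mixture.simplex} one has $\widetilde{p}_i = \delta_i$, so $a_i = p(i;\eta)^{\lambda}$ and the formula collapses to \eqref{eqn:simplex.vartheta}, as claimed in the paragraph preceding the proposition.
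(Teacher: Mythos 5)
Your proposal is correct and follows essentially the same route as the paper: differentiate $\psi = \frac{q}{\lambda}\log Z$ under the integral sign to get $\partial_{\bar\eta_i}\psi = \frac{1}{\lambda}Z^{\lambda-1}(a_i - a_0)$ (the paper's displayed gradient, which has a minor typo writing $p_i - p_0$ for $\widetilde{p}_i - \widetilde{p}_0$), and then apply the definition of the $\lambda$-deformed Legendre transformation. The only difference is that you spell out the denominator computation $1 - \lambda\,\nabla_{\bar\eta}\psi\cdot\bar\eta = Z^{\lambda-1}a_0$, which the paper leaves implicit in the phrase ``follows from the definition''; this is a welcome addition, not a deviation.
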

	\begin{proof}
		Given $\bar{\eta} \in \Omega$, consider $\eta$ where $\eta_i = \bar{\eta}_i$ for $1 \leq i \leq d$ and $\eta_0 = 1 - \sum_i \bar{\eta}_i$. Using \eqref{eqn:derivative.eta.bar}, we have
		\[
		\frac{\partial \psi}{\partial \bar{\eta}_i} = \frac{1}{\lambda Z(\eta)} \int \left( \sum_{j = 0}^d \eta_j \widetilde{p}_j\right)^{1/q - 1} (p_i - p_0) d\nu.
		\]
		Now \eqref{eqn:mixture.family.theta} follows from the definition of the $\lambda$-deformed Legendre transformation.
	\end{proof}
	
	\subsection{Further relations} \label{sec:further.relations}
	We have seen in \eqref{eqn:alpha.mixture.as.lambda} that a $\alpha$-mixture family can be reparameterized as a $\lambda$-exponential family with $\lambda = \frac{1 - \alpha}{2}$. In this subsection we consider other relations between the $\alpha$-mixture family, $\lambda$-mixture family and $\lambda$-exponential family.
	
	The $\alpha$-mixture family and the $\lambda$-mixture family differ in the way the densities are normalized before taking the average: in \eqref{eqn:alpha.family} one considers $\sum_i w_i p_i^{(1 - \alpha)/2}$, while in \eqref{eqn:lambda.mixture.density} we have instead $\sum_i \eta_i \mathcal{E}_q[p_i]$. The $\alpha$-mixture family is more general since the first sum always exists while the escort distributions $\mathcal{E}_q[p_i]$ may not be well-defined (since the integral $\int p_i^q \mathrm{d}\nu$ may diverge). Nevertheless, when the state space is a finite or bounded and $p_i > 0$ for all $i$, the $\lambda$-mixture family is always well-defined. Again we note that the main motivation for the $\lambda$-mixture family is Theorem \ref{thm:lambda.mixture} which is analogous to the Legendre duality of the ordinary mixture family. 
	
	Now we show that a $\lambda$-mixture family, when exists, is a reparameterization of the $\alpha$-mixture family. So the two families give the same collection of distributions.
	
	\begin{proposition}
		A $\lambda$-mixture family is a reparameterization of the $\alpha$-mixture family (with the same base densities $p_0, \ldots, p_d$) where $\lambda = \frac{\alpha + 1}{2}$.
	\end{proposition}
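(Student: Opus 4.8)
The plan is to produce the explicit reparameterization and check that the two parameterized densities agree pointwise. First I would record the bookkeeping between the two deformation parameters: set $\alpha = 2\lambda - 1$, so that $q = 1 - \lambda = \frac{1-\alpha}{2}$ and the exponent $2/(1-\alpha)$ in the $\alpha$-mixture density \eqref{eqn:alpha.mixture.family} coincides with $1/q$. Under this identification the $\alpha$-mixture family with base densities $p_0, \ldots, p_d$ reads $p_\alpha(x;w) = c(w)\bigl(\sum_{i} w_i\, p_i(x)^q\bigr)^{1/q}$, while by \eqref{eqn:lambda.mixture.density2} the $\lambda$-mixture family reads $p_\eta(x) = Z(\eta)^{-1}\bigl(\sum_{i} \eta_i\, \widetilde{p}_i(x)\bigr)^{1/q}$, where $\widetilde{p}_i = p_i^q / Z_i$ and $Z_i := \int p_i^q\,\mathrm{d}\nu \in (0,\infty)$; the finiteness of the escort normalizers $Z_i$ is precisely the standing hypothesis under which Definition \ref{def:lambda.mixture} is meaningful.

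Next I would use the elementary rescaling identity
\[
\sum_{i=0}^d \eta_i\, \widetilde{p}_i(x) = \sum_{i=0}^d \frac{\eta_i}{Z_i}\, p_i(x)^q = \Bigl(\sum_{j=0}^d \frac{\eta_j}{Z_j}\Bigr)\sum_{i=0}^d w_i\, p_i(x)^q, \qquad w_i := \frac{\eta_i/Z_i}{\sum_{j}\eta_j/Z_j},
\]
noting that $w = (w_0,\ldots,w_d)$ is again a probability vector. Raising both sides to the power $1/q$, the $\lambda$-mixture numerator equals a strictly positive constant depending on $\eta$ alone times the $\alpha$-mixture numerator $\bigl(\sum_i w_i p_i(x)^q\bigr)^{1/q}$. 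Since $p_\eta(\cdot)$ and $p_\alpha(\cdot\,;w)$ are obtained from these two mutually proportional functions of $x$ by dividing by their $\nu$-integrals, the $x$-independent proportionality factor is absorbed into the normalizations and we conclude $p_\eta(\cdot) = p_\alpha(\cdot\,;w)$ identically (in particular the integrability needed to define $c(w)$ holds because the $\lambda$-mixture integral converges).

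Finally I would check that $\eta \mapsto w(\eta)$ is a bijection of the simplex $\{\eta_i \geq 0,\ \sum_i \eta_i = 1\}$ onto itself, with explicit inverse $\eta_i = w_i Z_i / \sum_j w_j Z_j$; this is a projective transformation induced by positive diagonal scaling, it maps the relative interior onto the relative interior and fixes each vertex $e_i$. Hence $\{p_\eta\}$ and $\{p_\alpha(\cdot\,;w)\}$ are the same collection of distributions and $w(\eta)$ is the claimed reparameterization. There is no real obstacle in this argument --- it is pure bookkeeping --- and the only points deserving a line of care are the finiteness of the $Z_i$ (which is built into the definition of the $\lambda$-mixture family) and the observation that the discrepancy between the two numerators is a constant in $x$, so that normalization makes the densities literally equal rather than merely proportional along a fixed direction.
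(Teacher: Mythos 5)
Your proof is correct and follows essentially the same route as the paper's: both rewrite $\sum_i \eta_i \widetilde{p}_i = \sum_i (\eta_i/Z_i) p_i^q$ and absorb the constant $\sum_j \eta_j/Z_j$ into the normalization, yielding the projective change of weights $w_i = \frac{\eta_i/Z_i}{\sum_j \eta_j/Z_j}$ with inverse $\eta_i = \frac{w_i Z_i}{\sum_j w_j Z_j}$. Your additional remarks on the exponent bookkeeping $(1-\alpha)/2 = q$ and on the bijectivity of $\eta \mapsto w$ on the simplex are accurate and only make explicit what the paper leaves implicit.
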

	\begin{proof}
		Consider a $\lambda$-mixture family $p(\cdot ; \eta)$ of $p_0, \ldots, p_d$. Let $q = 1 - \lambda$. Write $\mathcal{E}_q[p_i] = p_i^q / Z_i$, where $Z_i = \int p_i^q d\nu$ is a constant which is finite by assumption. In what follows $c, c'$ are positive constants. Then we have
		\begin{equation*}
			\begin{split}
				p(\cdot ; \eta) &= c \left( \sum_{i = 0}^d \eta_i \frac{p_i^q}{Z_i} \right)^{1/q} 
				= c' \left( \sum_{i = 0}^d \frac{\eta_i/Z_i}{\sum_j \eta_j / Z_j} p_i^q \right)^{1/q} \\
				&= c' \left( \sum_{i = 0}^d w_i p_i^{(1 - \alpha)/2} \right)^{2/(1 - \alpha)} = p_{\alpha}(\cdot ; w),
			\end{split}
		\end{equation*}
		where the two sets of mixture parameters are related by $w_i = \frac{\eta_i/Z_i}{\sum_j \eta_j / Z_j}$ and $\eta_i = \frac{w_i Z_i}{\sum_j w_j Z_j}$. Here the parameter transformations are also projective.
	\end{proof}
	
	Because of this result, the $\alpha$-mixture family and $\lambda$-mixture family (when $\lambda = \frac{\alpha + 1}{2}$) are qualitatively similar. It is interesting that the reparameterization $w \mapsto \eta$ enables us to link the family with the $\lambda$-duality. In view of this result  we have the following
	
	\begin{corollary}
		Suppose $p_0 > 0$. Then a $\lambda$-mixture family is, after a reparameterization, a $\lambda'$-exponential family, and hence a $q'$-exponential family, where $\lambda' = 1 - \lambda$ and $q' = 1 -\lambda' = \lambda$.
	\end{corollary}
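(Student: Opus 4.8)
The plan is simply to compose the two reparameterizations already established and to keep careful track of the three parameter conventions involved. By the preceding proposition, the given $\lambda$-mixture family with base densities $p_0,\ldots,p_d$ coincides, after the projective change of mixture parameters $w_i = \frac{\eta_i/Z_i}{\sum_j \eta_j/Z_j}$ with $Z_i = \int p_i^q\,\mathrm{d}\nu$ and $q = 1-\lambda$, with the $\alpha$-mixture family on the \emph{same} base densities, where $\alpha = 2\lambda - 1$ (equivalently $\lambda = \frac{\alpha+1}{2}$). So it remains only to reparameterize the $\alpha$-mixture family as an exponential-type family, which is where the hypothesis $p_0 > 0$ enters.

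For this I would invoke the computation in \eqref{eqn:alpha.mixture.as.lambda}: since $p_0 > 0$, we may factor $p_0(x)$ out of the $\alpha$-mean and use the Box--Cox transformation to write $p_\alpha(x;w) = c(w)\,p_0(x)\,(1 + \lambda'\,\vartheta\cdot F(x))^{1/\lambda'}$, where $\lambda' = \frac{1-\alpha}{2}$, $\vartheta = (w_1,\ldots,w_d)$, and $F_i(x) = \frac{1}{\lambda'}\left[(p_i(x)/p_0(x))^{\lambda'} - 1\right]$ is the Box--Cox transform of the likelihood ratio; the normalization $\int p_\alpha(\cdot;w)\,\mathrm{d}\nu = 1$ defines $c(w) = e^{-\varphi_{\lambda'}(\vartheta)}$, and $\vartheta$ ranges over an open convex set since the affine independence of $p_0,\ldots,p_d$ makes $w\mapsto\vartheta$ bijective. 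Substituting $\alpha = 2\lambda - 1$ gives $\lambda' = \frac{1-(2\lambda-1)}{2} = 1-\lambda$, so this is a $\lambda'$-exponential family in the sense of Definition \ref{def:lambda.exp.family}. Finally, by Definition \ref{def:lambda.exp.family} together with Propositions \ref{prop:lambda.as.reparamterize} and \ref{prop:q.to.lambda}, a $\lambda'$-exponential family is (globally under the support condition, locally in general) equivalent to a $q'$-exponential family with $q' = 1 - \lambda' = \lambda$. Composing the two projective reparameterizations $\eta \mapsto w \mapsto \vartheta$ yields the corollary.

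I do not anticipate a genuine analytic obstacle: the substance is entirely the chain of reparameterizations, and $p_0 > 0$ is precisely what legitimizes dividing by $p_0$ so that the statistics $p_i/p_0$ are well-defined. The only step demanding real care is the bookkeeping of the distinct parameters --- $\lambda$ for the mixture, $\alpha$ for the $\alpha$-mean, and the curvature $\lambda' = 1-\lambda$ of the resulting exponential family --- and the observation that, as throughout this subsection, these identifications are at the level of functional forms, with the regularity hypotheses of Assumption \ref{ass:lambda.family} inherited when $\lambda' < 1$, i.e.\ $q' = \lambda > 0$.
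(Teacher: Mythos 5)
Your proposal is correct and is essentially the paper's own argument: the corollary is stated as an immediate consequence of the preceding proposition (identifying the $\lambda$-mixture family with the $\alpha$-mixture family for $\alpha = 2\lambda-1$) combined with the computation in \eqref{eqn:alpha.mixture.as.lambda} expressing an $\alpha$-mixture family with $p_0>0$ as a $\frac{1-\alpha}{2}$-exponential family, and your arithmetic $\lambda' = \frac{1-(2\lambda-1)}{2} = 1-\lambda$, $q' = \lambda$ is exactly right. The parameter bookkeeping you flag as the only delicate point is indeed the whole content of the proof.
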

	
	Thus both the $\alpha$-mixture family and the $\lambda$-mixture family can be regarded as special cases of the $\lambda$-exponential family (for different values of $\lambda$). This is a special feature of our framework as it is well-known that in general a mixture family cannot be expressed as an exponential family. We also remark that while the $\lambda$-exponential family and the $q$-exponential family give algebraically the most general expressions, the $\lambda$-mixture family satisfies the duality in Theorem \ref{thm:lambda.mixture} (which adopts the R\'{e}nyi entropy) when $\lambda < 1$ (or $q > 0$), while the results of $\lambda'$-exponential family (which adopts another potential function) requires $\lambda' < 1$.

	
	\section{Information geometry of $\lambda$-logarithmic divergence} \label{sec:information.geometry}
	Both the $\lambda$-exponential and $\lambda$-mixture families can be regarded as manifolds of probability distributions. By Theorem \ref{thm:Renyi.divergence} and Theorem \ref{thm:lambda.mixture}, each family is associated to a $\lambda$-logarithmic divergence (Definition \ref{def:L.div}) which is shown to be a R\'{e}nyi divergence.  Again, this parallels the classical exponential and mixture families which are connected to the Kullback-Leibler divergence, see \eqref{eqn:exp.family.Bregman} and \eqref{eqn:mixture.family.Bregman}. In information geometry \cite{a16}, a divergence induces a geometry on the underlying manifold of probability families that carries a dualistic structure. Formally, it consists of a Riemannian metric as well as a pair of mutually dual and torsion-free affine connections; in differential geometry, this structure is also known as a ``statistical manifold''. Using the results of \cite{W18}, in this section we summarize the information geometry induced by a $\lambda$-logarithmic divergence. 
	
	Let $\Omega$ be an open convex set of $\mathbb{R}^d$ and let ${\bf L}_{\lambda, \varphi}[\cdot : \cdot]$ be a $\lambda$-logarithmic divergence on $\Omega$. For concreteness, the reader may keep in mind the situation of Theorem \ref{thm:Renyi.divergence} where $\Omega$ is the natural parameter set of a $\lambda$-exponential family and $\varphi = \varphi_{\lambda}$ is the divisive potential function. 
	
	\subsection{Riemannian metric}
	Consider two nearly points $\vartheta$ and $\vartheta + \Delta \vartheta$ in $\Omega$ and consider the divergence ${\bf L}_{\lambda, \varphi}[ \vartheta + \Delta \vartheta : \vartheta]$. Applying a Taylor approximation to \eqref{eqn:lambda.logdivergence}, we have the local quadratic approximation
	\begin{equation} \label{eqn:L.div.quadratic.approx}
		{\bf L}_{\lambda, \varphi}[ \vartheta + \Delta \vartheta : \vartheta] = \frac{1}{2} (\Delta \vartheta)^{\top} g(\vartheta) (\Delta \vartheta) + O(|\Delta \vartheta|^3),
	\end{equation}
	where the matrix 
	\begin{equation} \label{eqn:L.div.metric}
		g(\vartheta) = \nabla^2 \varphi(\vartheta) + \lambda (\nabla \varphi(\vartheta))(\nabla \varphi(\vartheta))^{\top}
	\end{equation}
	is positive definite as can be seen from \eqref{eqn:second.derivative.condition}. We regard $g(\vartheta)$ as a metric tensor on the manifold $\Omega$:
	\[
	\mathrm{d} s^2 = \sum_{i, j} g_{ij}(\vartheta) \mathrm{d} \vartheta^i \mathrm{d} \vartheta^j.
	\]
	
	Let us compare \eqref{eqn:L.div.metric} with the metric induced by a Bregman divergence. If $\phi(\theta)$ is a convex potential which defines a Bregman divergence, the corresponding metric tensor is the Hessian $\nabla^2 \phi(\theta)$ \cite[Chapter 1]{a16}. From \eqref{eqn:L.div.metric}, we see that when $\varphi$ is itself convex, then the metric induced by the $\lambda$-logarithmic divergence is a rank-$1$ correction of a Hessian metric. Alternatively, from \eqref{eqn:second.derivative.condition}, we may also cast \eqref{eqn:L.div.metric} as 
	\[
	g(\vartheta) = e^{-\lambda \varphi(\vartheta)} \nabla^2 \Phi(\vartheta),
	\]
	where $\Phi = \frac{1}{\lambda} (e^{\lambda \varphi} - 1)$ is convex. Thus, the metric can also be regarded as a conformal transformation of the Hessian metric induced by $\Phi$; hence, we call it a {\it conformal Hessian metric}. In fact, as shown in \cite{WY19}, the $\lambda$-logarithmic divergence may be regarded as a monotone transformation of a conformal Bregman divergence.

	For the $\lambda$-exponential and $\lambda$-mixture families, where the divergence is a R\'{e}nyi divergence, the induced Riemannian metric is $q$ times the Fisher information metric (see \cite[Section III-H]{VH14}). This is different from classical deformation theory (see for example \cite{AO11}) where the induced metric is a conformal transformation of the Fisher metric. By using the $\lambda$-duality we recover exactly the Fisher metric up to a multiplicative constant.
	
	\begin{remark}
		In \cite{NZ18}, it was shown that deformed exponential families (under subtractive normalization) admit in general both a Hessian metric and a conformal Hessian metric, where the Fisher metric is deformed as a $(\rho, \tau)$-metric. 
	\end{remark}
	
	\subsection{Geodesics and generalized Pythagorean theorem}
	Remarkably, the $\lambda$-logarithmic divergence (which contains the Bregman divergence in the limit) satisfies a {\it generalized Pythagorean theorem}. To state the result it we need the notion of primal and dual geodesics. We regard $\vartheta \in \Omega$ as the primal coordinate system, and $\eta = \nabla^{c_{\lambda}} \varphi(\vartheta)$ as the dual coordinate system. Let $\gamma(t)$ be a curve in the statistical manifold. 
	\begin{itemize}
		\item We say that $\gamma$ is a {\it primal pre-geodesic} if its image under the primal coordinate system is a straight line.
		\item Similarly, $\gamma$ is a {\it dual pre-geodesic} if its image is a straight line under the dual coordinate system.
	\end{itemize}
	Note that we call the curves pre-geodesics because the actual geodesics (defined by the primal and dual geodesic equations) run in non-constant speed in the respective coordinate systems. We refer the reader to \cite{W18} for the associated primal and dual affine connections which define the geodesic equations. Nevertheless, the trajectories of these geodesics are straight lines under the respective coordinate systems. Because of this feature, the geometry is said to be {\it dually projectively flat}. We are now ready to state the theorem.
	
	\begin{theorem} [Generalized Pythagorean theorem]
		Let $\vartheta_P, \vartheta_Q, \vartheta_R$ be three points in the statistical manifold. Then
		\begin{equation} \label{eqn:pyth}
			{\bf L}_{\lambda, \varphi}[ \vartheta_Q : \vartheta_P] + {\bf L}_{\lambda, \varphi}[ \vartheta_R : \vartheta_Q] = {\bf L}_{\lambda, \varphi}[ \vartheta_R : \vartheta_P]
		\end{equation}
		if and only if the primal pre-geodesic from $\vartheta_Q$ to $\vartheta_R$ and the dual pre-geodesic from $\vartheta_Q$ to $\vartheta_P$ are $g$-orthogonal at $\vartheta_Q$. (When $\lambda > 0$, we assume that the divergences involved are all finite.)
	\end{theorem}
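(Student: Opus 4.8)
The plan is to split the equivalence into (a) an analytic computation of the divergence ``excess'' and (b) a differential-geometric evaluation of the metric pairing, and then to verify that the two resulting conditions coincide.

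First I would compute $\Delta := {\bf L}_{\lambda,\varphi}[\vartheta_Q:\vartheta_P] + {\bf L}_{\lambda,\varphi}[\vartheta_R:\vartheta_Q] - {\bf L}_{\lambda,\varphi}[\vartheta_R:\vartheta_P]$ straight from Definition \ref{def:L.div}. The three values $\varphi(\vartheta_P),\varphi(\vartheta_Q),\varphi(\vartheta_R)$ cancel in pairs, so only logarithms survive; writing $\vartheta_R-\vartheta_P=(\vartheta_R-\vartheta_Q)+(\vartheta_Q-\vartheta_P)$ and abbreviating
\[
A = 1+\lambda\nabla\varphi(\vartheta_P)\cdot(\vartheta_Q-\vartheta_P),\quad B = \nabla\varphi(\vartheta_Q)\cdot(\vartheta_R-\vartheta_Q),\quad C = \nabla\varphi(\vartheta_P)\cdot(\vartheta_R-\vartheta_Q),
\]
one obtains $\Delta = \tfrac1\lambda\log\tfrac{A+\lambda C}{A(1+\lambda B)}$. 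For $\lambda<0$ the quantities $A$, $1+\lambda B$, $A+\lambda C$ are automatically positive (the three divergences are finite), while for $\lambda>0$ this is precisely where the finiteness hypothesis is used. Hence the Pythagorean identity $\Delta=0$ is equivalent to $A+\lambda C = A(1+\lambda B)$, that is, to
\[
\nabla\varphi(\vartheta_P)\cdot(\vartheta_R-\vartheta_Q) = \bigl(1+\lambda\nabla\varphi(\vartheta_P)\cdot(\vartheta_Q-\vartheta_P)\bigr)\,\nabla\varphi(\vartheta_Q)\cdot(\vartheta_R-\vartheta_Q);
\]
call this condition $(\star)$.

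For the geometric side, the primal pre-geodesic from $\vartheta_Q$ to $\vartheta_R$ is a straight segment in the $\vartheta$-chart, so its velocity at $\vartheta_Q$ is $v=\vartheta_R-\vartheta_Q$; the dual pre-geodesic from $\vartheta_Q$ to $\vartheta_P$ is a straight segment in the dual chart $\eta=\nabla^{c_{\lambda}}\varphi(\vartheta)$, so its velocity at $\eta_Q$ is $\eta_P-\eta_Q$, i.e.\ $w=\frac{\partial\vartheta}{\partial\eta}\big|_{\vartheta_Q}(\eta_P-\eta_Q)$ in the $\vartheta$-chart, and the asserted $g$-orthogonality reads $v^\top g(\vartheta_Q)\frac{\partial\vartheta}{\partial\eta}\big|_{\vartheta_Q}(\eta_P-\eta_Q)=0$. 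The core step is to evaluate the matrix $g\,\frac{\partial\vartheta}{\partial\eta}$: differentiating $\eta=\nabla^{c_{\lambda}}\varphi(\vartheta)=\frac{\nabla\varphi(\vartheta)}{1-\lambda\vartheta\cdot\nabla\varphi(\vartheta)}$, inserting $g=\nabla^2\varphi+\lambda(\nabla\varphi)(\nabla\varphi)^\top$ from \eqref{eqn:L.div.metric}, and simplifying with the Sherman--Morrison formula, I expect to arrive at
\[
g(\vartheta)\,\frac{\partial\vartheta}{\partial\eta} = D(\vartheta)\bigl(I - \lambda\,(\nabla\varphi(\vartheta))\,\vartheta^\top\bigr),\qquad D(\vartheta):=1-\lambda\,\vartheta\cdot\nabla\varphi(\vartheta)>0,
\]
the positivity coming from Theorem \ref{thm:c.exp.potential} (equivalently the hypotheses of Theorem \ref{thm:lambda.duality}); pleasantly, no inversion of $g$ is required.

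It then remains to substitute and expand. Using the defining relations $\nabla\varphi(\vartheta_\bullet)=D_\bullet\,\eta_\bullet$ and $D_\bullet\,(1+\lambda\,\vartheta_\bullet\cdot\eta_\bullet)=1$ --- both immediate from the definition of $\nabla^{c_{\lambda}}\varphi$ --- a short manipulation should yield $v^\top g(\vartheta_Q)\frac{\partial\vartheta}{\partial\eta}\big|_{\vartheta_Q}(\eta_P-\eta_Q)=\frac{D_Q}{D_P}\,(C-AB)$. Since $D_Q/D_P>0$, the $g$-orthogonality is equivalent to $C=AB$, i.e.\ to $(\star)$, i.e.\ to $\Delta=0$, which is exactly the Pythagorean identity; the case $\lambda>0$ carries the finiteness hypothesis throughout, as already noted. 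I expect the main obstacle to be the metric/Jacobian computation together with the bookkeeping that aligns the analytically-derived condition $(\star)$ with the geometric orthogonality --- this ultimately rests on the elementary identity $D(\vartheta)\bigl(1+\lambda\,\vartheta\cdot\nabla^{c_{\lambda}}\varphi(\vartheta)\bigr)=1$. All of these ingredients are essentially available in \cite{W18,PW16}, from which the theorem may alternatively be quoted.
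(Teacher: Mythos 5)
Your argument is correct, and it is worth noting that the paper itself does not prove this theorem at all --- it simply cites \cite[Theorem 16]{W18} --- so you have supplied a genuinely self-contained verification. I checked the two key computations. On the analytic side, the potential values do cancel and one indeed gets $\Delta = \tfrac{1}{\lambda}\log\tfrac{A+\lambda C}{A(1+\lambda B)}$, so (granting positivity of the three arguments, automatic for $\lambda<0$ and guaranteed by the finiteness hypothesis for $\lambda>0$) the Pythagorean identity is equivalent to $C=AB$. On the geometric side, writing $a=\nabla\varphi$, $D=1-\lambda\,\vartheta\cdot a$, one finds $\tfrac{\partial\eta}{\partial\vartheta}=\tfrac{1}{D}\bigl(I+\tfrac{\lambda}{D}a\vartheta^{\top}\bigr)g$, whence by Sherman--Morrison $g\,\tfrac{\partial\vartheta}{\partial\eta}=D\,(I-\lambda a\vartheta^{\top})$ exactly as you predicted, and using $\lambda\,\vartheta_Q\cdot a_P=A-D_P$ and $\lambda\,\vartheta_Q\cdot a_Q=1-D_Q$ the pairing collapses to $\tfrac{D_Q}{D_P}(C-AB)$; since $D_P,D_Q>0$ by regularity of $\varphi$, orthogonality is equivalent to $C=AB$. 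The one hypothesis you should state explicitly rather than leave implicit is that $\varphi$ is \emph{regular} $c_{\lambda}$-convex (so that $D>0$ and the dual chart $\eta=\nabla^{c_{\lambda}}\varphi$ is a diffeomorphism); this is presupposed throughout Section \ref{sec:information.geometry}, so it is a presentational point rather than a gap. Compared with quoting \cite{W18}, your route has the advantage of exposing exactly where the finiteness assumption for $\lambda>0$ enters and of isolating the clean matrix identity $g\,\tfrac{\partial\vartheta}{\partial\eta}=D\,(I-\lambda(\nabla\varphi)\vartheta^{\top})$, which is of independent use.
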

	\begin{proof}
		See \cite[Theorem 16]{W18}.
	\end{proof}
	
	Furthermore, by \cite[Theorem 15]{W18}, the geometric structure induced by a $\lambda$-logarithmic divergence has {\it constant} sectional curvature (in the information-geometric sense) equal to $\lambda$. (For a converse see \cite[Theorem 19]{W18}.) This justifies the interpretation of the constant $\lambda$ as the curvature. We refer the reader to \cite{WY19, WY19b} for some geometric interpretations of the curvature in terms of the logarithmic divergence between a primal-dual pair of geodesics. Some aspects of projections with respect to the $L^{(\alpha)}$-divergence (equivalent to a  $\lambda$-logarithmic divergence where $\alpha = -\lambda > 0$) are studied in \cite{MR21,TW21} where a dual foliation is constructed.

	\section{Duality between $\lambda$-exponential family and $\lambda$-logarithmic divergence}   \label{sec:dual.divergence}
	Our previous results show that the $\lambda$-exponential and $\lambda$-mixture families have a rich analytic and geometric structure when considered from the viewpoint of $\lambda$-duality and $\lambda$-logarithmic divergence. In this section we study some statistical implications of our approach. 
	
	\subsection{Motivation}
	An important property of exponential family which is perhaps less well known is that the log likelihood of an exponential family can be associated to a Bregman divergence. To give a concrete example, consider the normal location family $\{N(\theta, I)\}_{\theta \in \mathbb{R}^d}$. Let the reference measure $\nu$ be the standard normal distribution $N(0, I)$. Then the density of $N(\theta, I)$ is given by
	\[
	p(x; \theta) = e^{x \cdot \theta - \frac{1}{2} |\theta|^2},
	\]
	which is an exponential family with $F(x) = x$, $\phi(\theta) = \frac{1}{2}|\theta|^2$ and $\psi(\eta) = \phi^*(\eta) = \frac{1}{2}|\eta|^2$. We have $\eta = \nabla \phi(\theta) = \theta$, so the normal location family (with unit variance) is {\it self-dual}: the natural parameter $\theta$ is the same as the expectation parameter $\eta$. Using the identity $\frac{1}{2} |x - \eta|^2 = \frac{1}{2}|x|^2 - \theta \cdot x + \frac{1}{2}|\theta|^2$, we may write
	\begin{equation} \label{eqn:normal.log.likelihood}
		\log p(x; \theta) = -\frac{1}{2}|x - \eta|^2 + \frac{1}{2}|x|^2 = - {\bf B}_{\psi}[x : \eta] + \psi(x).
	\end{equation}
	Thus the log-likelihood can be expressed in terms of the Bregman divergence of $\psi$, which in this case is $1/2$ times the squared distance. This maybe regarded as a probabilistic basis of the least squares criterion. Next consider a general exponential family. By considering the distribution of $Y = F(X)$, we may consider exponential families on $\mathbb{R}^d$ of the form
	\[
	p(y ; \theta) = e^{\theta \cdot y - \phi(\theta)}.
	\]
	Let $\psi = \phi^*$ be the convex conjugate of $\phi$. In the seminal paper \cite{BMDG05}, it was proved  that when the family is absolutely continuous with respect to either the counting or Lebesgue measure, and satisfies natural regularity conditions, then one has the representation
	\begin{equation} \label{eqn:log.likliehood.divergence}
		p(y ; \theta) = e^{- {\bf B}_{\psi} [ y : \eta] + \psi(y)}.
	\end{equation}
	Note that in general it is the dual parameter $\eta = \nabla \phi(\theta) = \mathbb{E}_{\theta}[F(X)]$, not $\theta$, which has a direct interpretation on the ``data space" where $y$ lives. This representation implies that maximization of the likelihood is equivalent to minimization of the corresponding Bregman divergence. We also mention that
	\begin{equation} \label{eqn:Bregman.barycenter}
		\eta = \argmin_{\eta'} \mathbb{E}_{\theta} [ {\bf B}_{\psi}[ Y : \eta']], \quad Y \sim p(\cdot ; \theta).
	\end{equation}
	So the expectation parameter $\eta$ is also the right barycenter of the distribution with respect to the Bregman divergence ${\bf B}_{\psi}$.
	
	\subsection{$\lambda$-exponential family and $\lambda$-logarithmic divergence}
	In \cite[Section 4]{W18}, the $\mathcal{F}^{(\pm \alpha)}$-families were derived by replacing the Bregman divergence in \eqref{eqn:log.likliehood.divergence} by a logarithmic divergence. For example, the density \eqref{eqn:f.log.likelihood.as.c} of the Dirichlet perturbation model can be expressed in the form $p(\cdot) \propto e^{-{\bf D}}$, where ${\bf D}$, given by \eqref{eqn:Dirichlet.cost}, is a logarithmic divergence (see Example \ref{eg:excess.growth}). Here, it is natural to adopt divisive rather than subtractive normalization. In this subsection, we derive heuristically the analogue of \eqref{eqn:log.likliehood.divergence} for the $\lambda$-exponential family, and leave the rigorous treatment to future research.
	
	Following the approach of \cite{BMDG05}, consider a $\lambda$-exponential family on $\mathbb{R}^d$ where $F(y) = y$. The dominating measure is assumed to be absolutely continuous with respect to either the counting or Lebesgue measure. Under the support condition (so that we may assume $1 + \lambda \vartheta \cdot y > 0$ on a common domain independent of $\vartheta$), we may write the density as
	\begin{equation} \label{eqn:lambda.exp.likelihood.step1}
		\begin{split}
			p(y ; \vartheta) &= (1 + \lambda \vartheta \cdot y)^{1/\lambda} e^{-\varphi_{\lambda}(\vartheta)} \\
			&= e^{\frac{1}{\lambda} \log (1 + \lambda \vartheta \cdot y) - \varphi_{\lambda}(\vartheta)}.
		\end{split}
	\end{equation}
	Here $\varphi_{\lambda}(\vartheta)$ is the divisive $\lambda$-potential which is $c_{\lambda}$-convex. Let $\psi_{\lambda}$ be the $\lambda$-conjugate of $\varphi_{\lambda}$ which is also $c_{\lambda}$-convex. 
	
	Let $\eta = \nabla^{c_{\lambda}} \varphi_{\lambda}(\vartheta)$ be the dual parameter corresponding to $\vartheta$. 
	By the $c_{\lambda}$-duality, we have
	\begin{equation} \label{eqn:duality.identity1}
		\frac{1}{\lambda} \log (1 + \lambda \vartheta \cdot \eta) = \varphi_{\lambda}(\vartheta) + \psi_{\lambda}(\eta).
	\end{equation}
	Also, we have the identity (see \cite[(41)]{W18})
	\begin{equation} \label{eqn:duality.identity2}
		1 + \lambda \vartheta \cdot \eta = \frac{1}{1 - \lambda \nabla \psi_{\lambda}(\eta) \cdot \eta}.
	\end{equation}
	
	Substituting \eqref{eqn:duality.identity1} into \eqref{eqn:lambda.exp.likelihood.step1}, we see that the exponent can be written as
	\begin{equation} \label{eqn:lambda.exp.likelihood.step2}
		\frac{1}{\lambda} \log (1 + \lambda \vartheta \cdot y) - \frac{1}{\lambda} \log (1 + \lambda \vartheta \cdot \eta) + \psi_{\lambda}(\eta).
	\end{equation}
	Write $\vartheta = \nabla^{c_{\lambda}} \psi_{\lambda}(\eta) = \frac{\nabla \psi_{\lambda}(\eta)}{1 - \lambda \nabla \psi_{\lambda}(\eta) \cdot \eta}$. Using this and \eqref{eqn:duality.identity2}, we may rearrange \eqref{eqn:lambda.exp.likelihood.step2} to get
	\begin{equation*}
		\frac{1}{\lambda} \log (1 + \lambda \nabla \psi(\eta) \cdot (y - \eta)) + \psi_{\lambda}(\eta) 
		= - {\bf L}_{\lambda, \psi}[y : \eta] + \psi_{\lambda}(y),
	\end{equation*}
	where in the last equality it is implicitly assumed that $y$ belongs to the domain of $\psi_{\lambda}$ so that the divergence ${\bf L}_{\lambda, \psi}[y : \eta]$ is well-defined (this is the main subtlety; also see the proof of \cite[Theorem 4]{BMDG05}). Thus, we have shown heuristically that under suitable conditions, the density of the $\lambda$-exponential family can be expressed in the form
	\begin{equation} \label{eqn:log.likelihood.as.L.div}
		p(y ; \vartheta) = e^{- {\bf L}_{\lambda, \psi}[ y : \eta] + \psi_{\lambda}(y)}.
	\end{equation}
	
	\begin{remark}
		For an explicit $\lambda$-exponential family such as the Dirichlet perturbation model (Example \ref{eg:Dirichlet}), the representation \eqref{eqn:log.likelihood.as.L.div} can be verified directly (see \eqref{eqn:f.log.likelihood.as.c}). Note that \eqref{eqn:log.likelihood.as.L.div} does not hold for the Cauchy location-scale family (Example \ref{ex:Cauchy}); this is because the distribution of $Y = F(X) = (X, X^2)$ is supported on a parabola and hence is not absolutely continuous with respect to the Lebesgue measure on $\mathbb{R}^2$. The same issue also occurs in the case of exponential family.
	\end{remark}
	
	\begin{remark}
		Using results from classical convex analysis \cite{R70} and deeper mathematical properties of exponential families \cite{B14}, in particular convex functions of Legendre type, Banerjee et al.~\cite{BMDG05} also showed that there is a one-to-one correspondence between what they call regular Bregman divergences and regular exponential families. We expect analogous results hold for $\lambda$-logarithmic divergences and $\lambda$-exponential families.
	\end{remark}
	
	\subsection{Statistical consequences} \label{sec:statistical.conseq}
	The representation \eqref{eqn:log.likelihood.as.L.div} leads to some interesting consequences. In the following results we consider a $\lambda$-exponential family whose density satisfies \eqref{eqn:log.likelihood.as.L.div}.
	
	\begin{theorem} [MLE as right barycenter] \label{thm:MLE.as.barycenter}
		Let $\hat{\vartheta}$ be a maximum likelihood estimate with respect to the $\lambda$-exponential family $\{p(\cdot ; \vartheta)\}_{\vartheta \in \Omega}$ and  
		data points $y_1, \ldots, y_n$ under i.i.d.~sampling. Then $\hat{\eta} = \nabla^{c_{\lambda}} \varphi_{\lambda}(\hat{\vartheta})$ is a right barycenter of the data points with respect to ${\bf L}_{\lambda, \psi_{\lambda}}$:
		\[
		\hat{\eta} \in \argmin_{\eta \in \Omega'} \sum_{i = 1}^n \frac{1}{n} {\bf L}_{\lambda, \psi_{\lambda}} [ y_i : \eta].
		\]
	\end{theorem}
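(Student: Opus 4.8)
The plan is to read the claim off directly from the representation \eqref{eqn:log.likelihood.as.L.div} together with the fact, furnished by Theorem \ref{thm:lambda.duality}(ii), that the $\lambda$-gradient $\vartheta \mapsto \eta = \nabla^{c_{\lambda}} \varphi_{\lambda}(\vartheta)$ is a diffeomorphism of $\Omega$ onto $\Omega'$. First I would write out the (normalized) log-likelihood of the data under the parameter $\vartheta$. Since the family is assumed to satisfy \eqref{eqn:log.likelihood.as.L.div} — so that in particular each observed $y_i$ lies in $\dom \psi_{\lambda}$ and the divergences below are well-defined — we have, with $\eta = \nabla^{c_{\lambda}} \varphi_{\lambda}(\vartheta)$,
\begin{equation} \label{eqn:MLE.barycenter.proof.step}
\frac{1}{n} \sum_{i = 1}^n \log p(y_i ; \vartheta) = -\frac{1}{n} \sum_{i = 1}^n {\bf L}_{\lambda, \psi_{\lambda}}[ y_i : \eta] + \frac{1}{n} \sum_{i = 1}^n \psi_{\lambda}(y_i).
\end{equation}
The last term on the right does not depend on the parameter. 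Hence maximizing the log-likelihood over $\vartheta \in \Omega$ is equivalent to minimizing $\frac{1}{n} \sum_{i = 1}^n {\bf L}_{\lambda, \psi_{\lambda}}[y_i : \eta]$ over the corresponding values of $\eta$.

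Next I would invoke the change of variables. Because $\vartheta \mapsto \eta$ is a bijection from $\Omega$ onto $\Omega'$ by Theorem \ref{thm:lambda.duality}(ii), the previous sentence says precisely that the image $\hat{\eta} = \nabla^{c_{\lambda}} \varphi_{\lambda}(\hat{\vartheta})$ of a maximum likelihood estimate $\hat{\vartheta}$ is an element of $\argmin_{\eta \in \Omega'} \frac{1}{n} \sum_{i = 1}^n {\bf L}_{\lambda, \psi_{\lambda}}[y_i : \eta]$; rescaling the objective by the constant $1/n$ does not alter the argmin, which is the assertion of the theorem. (If uniqueness of the minimizer is desired one can additionally note that strict positive-definiteness of the Hessian of $\frac{1}{\lambda}(e^{\lambda \psi_{\lambda}} - 1)$ — which holds since $\psi_{\lambda}$ is regular $c_{\lambda}$-convex — makes $\eta \mapsto {\bf L}_{\lambda, \psi_{\lambda}}[y_i : \eta]$ strictly convex-like in the relevant sense, forcing $\hat{\eta}$ to be the unique barycenter; but for the stated ``$\in \argmin$'' this is not needed.)

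The conceptual content here is light; the only real obstacle is the technical hypothesis underlying \eqref{eqn:log.likelihood.as.L.div} itself — namely that for $\nu$-a.e.\ $y$ (and hence almost surely for the i.i.d.\ data) the point $y$ lies in the domain of the $\lambda$-conjugate $\psi_{\lambda}$, so that ${\bf L}_{\lambda, \psi_{\lambda}}[y : \eta]$ is finite and the identity in \eqref{eqn:log.likelihood.as.L.div} is valid. This is exactly the subtlety flagged in the derivation of \eqref{eqn:log.likelihood.as.L.div} and in the proof of \cite[Theorem 4]{BMDG05}, and it is handled by the standing assumption that the $\lambda$-exponential family under consideration satisfies \eqref{eqn:log.likelihood.as.L.div}. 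Once that is granted, \eqref{eqn:MLE.barycenter.proof.step} and the diffeomorphism property give the result immediately.
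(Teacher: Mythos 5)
Your proof is correct and follows essentially the same route as the paper: apply the representation \eqref{eqn:log.likelihood.as.L.div} to write the log-likelihood as $-\sum_i {\bf L}_{\lambda, \psi_{\lambda}}[y_i : \eta] + \sum_i \psi_{\lambda}(y_i)$, observe the second sum is parameter-free, and conclude that maximizing the likelihood is minimizing the average divergence. The extra care you take with the $\vartheta \leftrightarrow \eta$ diffeomorphism and the domain of $\psi_{\lambda}$ is implicit in the paper's terser argument but is a welcome clarification, not a different method.
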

	\begin{proof}
		Consider observations $y_1, \ldots, y_n \in \mathcal{Y}$. By \eqref{eqn:log.likelihood.as.L.div}, the log-likelihood is given by
		\[
		\sum_{i = 1}^n \log p(y_i ; \vartheta) = \sum_{i = 1}^n \left( - {\bf L}_{\lambda, \psi_{\lambda}}[ y_i : \eta] + \psi_{\lambda}(y_i) \right).
		\]
		Since the second term is independent of $\vartheta$, maximizing the likelihood is equivalent to minimizing the sum of the divergences. It follows that the MLE $\hat{\eta}$ is the right barycenter.
	\end{proof}
	
	By Theorem \ref{thm:MLE.as.barycenter}, minimization of the $\lambda$-logarithmic divergence can be interpreted probabilistically as maximum likelihood estimation of an underlying $\lambda$-exponential family. This gives a probabilistic basis for using $\lambda$-logarithmic divergences as loss functions. Using the duality \eqref{eqn:log.likliehood.divergence} between exponential family and Bregman divergence, statistical methodologies such as clustering and principal component analysis were investigate in the literature \cite{BMDG05, CDS02}. Some preliminary results about these statistical methodologies using $\lambda$-logarithmic divergence are reported in the recent work \cite{TW21}.
	
	The next result, which is closely related to Theorem \ref{thm:MLE.as.barycenter}, provides a new probabilistic interpretation of the dual variable $\eta$. In classical deformation theory (Section \ref{sec:classic.deformation}), $\eta = \widetilde{\mathbb{E}}_{\vartheta}[Y]$ is the escort expectation which involves the escort distribution. Here, we show that the dual parameter has a probabilistic interpretation under the original distribution without performing the escort transformation.
	
	\begin{theorem} [Dual variable as right barycenter]\label{thm:dual.as.barycenter}
		The dual variable $\eta$ is the unique right barycenter of the distribution $p(\cdot ; \vartheta)$:
		\begin{equation} \label{eqn:dual.as.barycenter}
			\eta = \argmin_{\eta' \in \Omega'} \mathbb{E}_{\vartheta} \left[ {\bf L}_{\lambda, \psi_{\lambda}} [ Y : \eta'] \right].
		\end{equation}
	\end{theorem}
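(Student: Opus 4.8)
The plan is to reduce the claim to Gibbs' inequality via the divergence representation of the likelihood, exactly as \eqref{eqn:Bregman.barycenter} is reduced to the nonnegativity of the KL-divergence in the classical case. By the standing hypothesis the density satisfies \eqref{eqn:log.likelihood.as.L.div}, that is $p(y ; \vartheta) = e^{-{\bf L}_{\lambda, \psi_{\lambda}}[y : \eta] + \psi_{\lambda}(y)}$ where $\eta = \nabla^{c_{\lambda}} \varphi_{\lambda}(\vartheta)$. Every $\eta' \in \Omega'$ is of the form $\eta' = \nabla^{c_{\lambda}} \varphi_{\lambda}(\vartheta')$ for a unique $\vartheta' \in \Omega$, since $\nabla^{c_{\lambda}} \varphi_{\lambda}$ is a diffeomorphism from $\Omega$ onto $\Omega'$ by Theorem \ref{thm:lambda.duality}(ii). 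Applying \eqref{eqn:log.likelihood.as.L.div} with $\vartheta'$ in place of $\vartheta$ gives, on the common support, ${\bf L}_{\lambda, \psi_{\lambda}}[y : \eta'] = \psi_{\lambda}(y) - \log p(y ; \vartheta')$. Subtracting the same identity written for $\eta$ eliminates the $\psi_{\lambda}(y)$ term and yields the pointwise identity
\[
{\bf L}_{\lambda, \psi_{\lambda}}[y : \eta'] - {\bf L}_{\lambda, \psi_{\lambda}}[y : \eta] = \log p(y ; \vartheta) - \log p(y ; \vartheta') = \log \frac{p(y ; \vartheta)}{p(y ; \vartheta')}.
\]

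Next I would take the expectation $\mathbb{E}_{\vartheta}$ of both sides under $Y \sim p(\cdot ; \vartheta)$. The right-hand side is precisely the Kullback--Leibler divergence ${\bf H}(p_{\vartheta} || p_{\vartheta'})$, which is nonnegative by Gibbs' inequality, so
\[
\mathbb{E}_{\vartheta}\left[ {\bf L}_{\lambda, \psi_{\lambda}}[Y : \eta'] \right] - \mathbb{E}_{\vartheta}\left[ {\bf L}_{\lambda, \psi_{\lambda}}[Y : \eta] \right] = {\bf H}(p_{\vartheta} || p_{\vartheta'}) \ge 0
\]
for every $\eta' \in \Omega'$. Hence $\eta$ attains the minimum in \eqref{eqn:dual.as.barycenter}. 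For uniqueness, equality forces ${\bf H}(p_{\vartheta} || p_{\vartheta'}) = 0$, hence $p_{\vartheta'} = p_{\vartheta}$ $\nu$-a.e.; since $(F_1, \ldots, F_d)$ are linearly independent (Condition \ref{con:technical.conditions}) the family is identifiable, so $\vartheta' = \vartheta$ and therefore $\eta' = \eta$. This is the population analogue of Theorem \ref{thm:MLE.as.barycenter} (replace the empirical average $\frac1n\sum_i$ by $\mathbb{E}_{\vartheta}$).

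The main obstacle is the integrability bookkeeping hidden in the two expectations. One must check that under $p_{\vartheta}$ the variable $Y$ lies $\mathbb{P}_{\vartheta}$-a.s.\ in the domain of $\psi_{\lambda}$, so that ${\bf L}_{\lambda, \psi_{\lambda}}[Y : \eta']$ is a legitimate finite-valued random variable and the pointwise identity above is valid; this follows from the support condition together with the assumption that \eqref{eqn:log.likelihood.as.L.div} holds. One must also ensure that $\mathbb{E}_{\vartheta}[\psi_{\lambda}(Y)]$, equivalently $\mathbb{E}_{\vartheta}[{\bf L}_{\lambda, \psi_{\lambda}}[Y : \eta]]$, is finite so that the two expectations may be subtracted; if instead this quantity is $+\infty$, one argues throughout with the difference ${\bf L}_{\lambda, \psi_{\lambda}}[Y : \eta'] - {\bf L}_{\lambda, \psi_{\lambda}}[Y : \eta] = \log (p(Y ; \vartheta) / p(Y ; \vartheta'))$, which is $\mathbb{P}_{\vartheta}$-integrable whenever ${\bf H}(p_{\vartheta} || p_{\vartheta'}) < \infty$ and otherwise makes both sides $+\infty$, so the conclusion is unaffected. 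When $\lambda > 0$ the divergence ${\bf L}_{\lambda, \psi_{\lambda}}[y : \eta']$ may equal $+\infty$ on a $\nu$-null set, which again does not affect the argument. Apart from this, the proof is a direct translation of the classical exponential-family computation.
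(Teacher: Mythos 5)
Your proof is correct and follows essentially the same route as the paper: both use the representation \eqref{eqn:log.likelihood.as.L.div} to write the difference of the two divergences as the log-likelihood ratio, take the expectation to obtain the KL-divergence ${\bf H}(p_{\vartheta} || p_{\vartheta'}) \geq 0$, and conclude. Your added attention to uniqueness (via identifiability) and to the integrability bookkeeping are welcome refinements the paper leaves implicit, but the argument is the same.
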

	\begin{proof}
		Let $\eta' \in \Omega'$ (corresponding to $\vartheta' \in \Omega$) be given. By \eqref{eqn:log.likelihood.as.L.div}, we have
		\begin{equation*}
			\begin{split}
				0 &\leq {\bf H}( p(\cdot ; \vartheta) || p(\cdot ; \vartheta')) \\
				&= \int p(y ; \vartheta) \log \frac{p(y ; \vartheta)}{p(y ; \vartheta')} d\nu(y) \\
				&= \mathbb{E}_{\vartheta} \left[ - {\bf L}_{\lambda, \psi_{\lambda}}[y : \eta] + {\bf L}_{\lambda, \psi_{\lambda}}[ y : \eta'] \right].
			\end{split}
		\end{equation*}
		Rearranging gives $\mathbb{E}_{\vartheta} [{\bf L}_{\lambda, \psi_{\lambda}}[ y : \eta]] \leq \mathbb{E}_{\vartheta} [{\bf L}_{\lambda, \psi_{\lambda}}[ y : \eta']]$ and the theorem is proved.
	\end{proof}
	
	%

	\section{Conclusion} \label{sec:conclusion}
	In this paper we showed that the $\lambda$-duality, which is a one-parameter deformation of convex duality, leads to fresh perspectives on the $q$-exponential family (where $q=1-\lambda$) and related concepts. In particular, the mathematical properties of the $\lambda$-exponential family under the $\lambda$-duality nicely parallel those of the exponential family. Furthermore, the $\lambda$-mixture family, which also conforms to the same duality, may be understood as another face of the $\lambda$-exponential family. In Table \ref{tab:persp} we summarize the analogy between the classical objects and the framework of this paper. 
	
	\begin{table}[h!]
		\centering
		\begin{tabular}{|p{5cm}|p{5cm}|} \hline
			{\bf Convex Duality} ($\lambda = 0$) & {\bf $\lambda$-Duality ($\lambda \neq 0$)}  \\ \hline
			convex function \eqref{eqn:convex.conjugate} & $c_{\lambda}$-convex function, where  \\& $\frac{1}{\lambda}(e^{\lambda f} - 1)$ is convex (Theorem \ref{thm:lambda.duality})\\ \hline
			Bregman divergence \eqref{eqn:Bregman.divergence} &  $\lambda$-logarithmic divergence (Definition \ref{def:L.div}) \\ \hline
			exponential family \eqref{eqn:exponential.family} & $\lambda$-exponential family (Definition \ref{def:lambda.exp.family}) \\ \hline
			mixture family \eqref{eqn:mixture.family} & $\lambda$-mixture family (Definition \ref{def:lambda.mixture}) \\ \hline
			Shannon entropy \eqref{eqn:Shannon.entropy} & R\'{e}nyi entropy \eqref{eqn:Renyi.entropy} \\ \hline
			KL-divergence \eqref{eqn:KL.divergence} & R\'{e}nyi divergence \eqref{eqn:Renyi.divergence} \\ \hline
			Dually flat geometry \cite{a16} & Dually projectively flat geometry (with constant  curvature $\lambda$) \cite{W18} \\ \hline
			$2$-Wasserstein transport \cite{V03,V08} & Dirichlet transport ($\lambda = -1$) \cite{PW16, PW18} (also see Example \ref{eg:Dirichlet})\\ \hline
		\end{tabular}
		\caption{Analogy between our framework and the classical one based on convex duality.}
		\label{tab:persp}
	\end{table}
	
	We expect that our framework will be helpful in further studies in the theory and application of generalized exponential families. A natural direction is to study implications of the $\lambda$-duality in the context of statistical physics. An explicit example is the {\it porous medium equation} which was analyzed in \cite{OW09} using the $q$-Gaussian distribution and information geometry (see also \cite{T12}). As described in Section \ref{sec:statistical.conseq}, the $\lambda$-logarithmic divergences leads to many potential applications in statistics and machine learning, some of which are being investigated by the authors. The geometric concepts studied in this paper, such as the escort transformation, are closely related to the Aitchison geometry of the probability simplex in compositional data analysis \cite{EA20}, and it is of interest to explore deeper links between compositional data analysis and information geometry. Finally, we believe that the $\lambda$-duality and logarithmic divergences can be applied to optimization in both convex and non-convex settings.

	
	%

	\section*{Appendix}
	\label{sec:App}
	\begin{proof}[Proof of Theorem \ref{thm:lambda.duality}(i)]
		We follow the approach of \cite[Section 3]{W18} and adapt the notations there to our setting. Consider the function $\frac{1}{\lambda} e^{\lambda f}$ which is convex on $\Omega$ by assumption (here the constant term in $F_{\lambda}$ is dropped for convenience). The tangent hyperplane based at $u' \in \Omega$ is given by
		\begin{equation*}
			\begin{split}
				u \mapsto &\frac{1}{\lambda} e^{\lambda f(u')} + e^{\lambda f(u')} \nabla f(u') \cdot (u - u') \\
				&= \frac{1}{\lambda} e^{\lambda f(u')} [ (1 - \lambda \nabla f(u') \cdot u') + \lambda \nabla f(u') \cdot u].
			\end{split}
		\end{equation*}
		By convex duality, $\frac{1}{\lambda} e^{\lambda f}$ is the maximum of the tangent hyperplanes where $u'$ varies over $\Omega$:
		\begin{equation*}
			\begin{split}
				&\quad \frac{1}{\lambda} e^{\lambda f(u)} 
				= \max_{u' \in \Omega}  \frac{1}{\lambda} e^{\lambda f(u')} [ (1 - \lambda \nabla f(u') \cdot u') + \nabla f(u') \cdot u]  \\
				&= \max_{u' \in \Omega} \frac{1}{\lambda} e^{\lambda f(u')} (1 - \lambda \nabla f(u') \cdot u') \left[1 + \frac{\lambda \nabla f(u') \cdot u}{1 - \lambda \nabla f(u') \cdot u'} \ \right].
			\end{split}
		\end{equation*}
		Note that in this step we use the assumption that $1 - \lambda \nabla f(u') \cdot u'$ is strictly positive. (A sufficient condition is that $0$ belongs to the closure of the domain. This construction also motivates the definition of the $\lambda$-deformed Legendre transformation $v' = \nabla^{c_{\lambda}} f(u')$.) Now take logarithm and rearrange. Here we consider the case $\lambda > 0$; the other case $\lambda < 0$ can be handled similarly. For $u \in \Omega$, we have
		\[
		f(u) = \max_{v' \in \Omega} \left\{ \frac{1}{\lambda} \log (1 + \lambda u \cdot v') - \widetilde{g}(v') \right\},
		\]
		where $v' = \frac{\nabla f(u')}{1 - \lambda \nabla f(u') \cdot u'}$ and $\widetilde{g}(v')$ absorbs the other terms which depend only on $v'$. 
		
		Since the mapping $u' \mapsto v'$ is a diffeomorphism from $\Omega$ to the range $\Omega'$ (see Proposition 1 and Theorem 11 of \cite{W18}), we may rewrite the above as
		\begin{equation*}
			\begin{split}
				f(u) &= \max_{v' \in \Omega'} \left\{ \frac{1}{\lambda} \log (1 + \lambda u \cdot v') - g(v') \right\} \\
				&= \max_{v' \in \Omega'} \left\{ -c_{\lambda}(u, v') - g(v') \right\},
			\end{split}
		\end{equation*}
		where $g(v') = \widetilde{g}(u')$. This shows that $f$ is $c_{\lambda}$-convex on $\Omega$. 
	\end{proof}
	

	\section*{Acknowledgment}
	The first author acknowledges support by NSERC Discovery Grant RGPIN-2019-04419 and a Connaught New Researcher Award. The second author acknowledges support by AFOSR Grant FA9550-19-1-0213. We also thank the referees and the associate editor for their careful reading and helpful comments.

	\bibliographystyle{abbrv}
	\bibliography{geometry.ref}

\begin{thebibliography}{10}

\bibitem{Abe03}
S.~Abe.
\newblock Geometry of escort distributions.
\newblock {\em Physical Review E}, 68(3):031101, 2003.

\bibitem{AC11}
M.~Agueh and G.~Carlier.
\newblock Barycenters in the {W}asserstein space.
\newblock {\em SIAM Journal on Mathematical Analysis}, 43(2):904--924, 2011.

\bibitem{A82}
J.~Aitchison.
\newblock The statistical analysis of compositional data.
\newblock {\em Journal of the Royal Statistical Society: Series B
  (Methodological)}, 44(2):139--160, 1982.

\bibitem{A07}
S.-I. Amari.
\newblock Integration of stochastic models by minimizing $\alpha$-divergence.
\newblock {\em Neural computation}, 19(10):2780--2796, 2007.

\bibitem{a16}
S.-I. Amari.
\newblock {\em Information Geometry and Its Applications}.
\newblock Springer, 2016.

\bibitem{AN00}
S.-I. Amari and H.~Nagaoka.
\newblock {\em Methods of Information Geometry}.
\newblock American Mathematical Society, 2000.

\bibitem{AO11}
S.-I. Amari and A.~Ohara.
\newblock Geometry of $q$-exponential family of probability distributions.
\newblock {\em Entropy}, 13(6):1170--1185, 2011.

\bibitem{AOM12}
S.-I. Amari, A.~Ohara, and H.~Matsuzoe.
\newblock Geometry of deformed exponential families: Invariant, dually-flat and
  conformal geometries.
\newblock {\em Physica A: Statistical Mechanics and its Applications},
  391(18):4308--4319, 2012.

\bibitem{AG13}
L.~Ambrosio and N.~Gigli.
\newblock A user’s guide to optimal transport.
\newblock In {\em Modelling and Optimisation of Flows on Networks}, pages
  1--155. Springer, 2013.

\bibitem{BMDG05}
A.~Banerjee, S.~Merugu, I.~S. Dhillon, and J.~Ghosh.
\newblock Clustering with {B}regman divergences.
\newblock {\em Journal of Machine Learning Research}, 6(Oct):1705--1749, 2005.

\bibitem{B14}
O.~Barndorff-Nielsen.
\newblock {\em Information and Exponential Families in Statistical Theory}.
\newblock John Wiley \& Sons, 2014.

\bibitem{B09}
J.-F. Bercher.
\newblock Source coding with escort distributions and {R}{\'e}nyi entropy
  bounds.
\newblock {\em Physics Letters A}, 373(36):3235--3238, 2009.

\bibitem{BC64}
G.~E. Box and D.~R. Cox.
\newblock An analysis of transformations.
\newblock {\em Journal of the Royal Statistical Society: Series B
  (Methodological)}, 26(2):211--243, 1964.

\bibitem{CDS02}
M.~Collins, S.~Dasgupta, and R.~E. Schapire.
\newblock A generalization of principal components analysis to the exponential
  family.
\newblock In {\em Advances in Neural Information Processing Systems}, pages
  617--624, 2002.

\bibitem{D68}
J.~M. Dickey.
\newblock Three multidimensional-integral identities with {B}ayesian
  applications.
\newblock {\em The Annals of Mathematical Statistics}, pages 1615--1628, 1968.

\bibitem{D13}
N.~Ding.
\newblock {\em Statistical machine learning in the $t$-exponential family of
  distributions}.
\newblock PhD thesis, Purdue University, 2013.

\bibitem{DBR06}
P.~Douglas, S.~Bergamini, and F.~Renzoni.
\newblock Tunable {T}sallis distributions in dissipative optical lattices.
\newblock {\em Physical Review Letters}, 96(11):110601, 2006.

\bibitem{EPMB03}
J.~J. Egozcue, V.~Pawlowsky-Glahn, G.~Mateu-Figueras, and C.~Barcelo-Vidal.
\newblock Isometric logratio transformations for compositional data analysis.
\newblock {\em Mathematical Geology}, 35(3):279--300, 2003.

\bibitem{ES06}
S.~Eguchi.
\newblock Information geometry and statistical pattern recognition.
\newblock {\em Sugaku Expositions}, 19(2):197--216, 2006.

\bibitem{EA20}
I.~Erb and N.~Ay.
\newblock The information-geometric perspective of compositional data analysis.
\newblock In {\em Advances in Compositional Data Analysis}, pages 21--43.
  Springer, 2021.

\bibitem{KM20}
M.~A. Kumar and K.~V. Mishra.
\newblock Cram{\'e}r--{R}ao lower bounds arising from generalized csisz{\'a}r
  divergences.
\newblock {\em Information Geometry}, 3(1):33--59, 2020.

\bibitem{LKLCO19}
K.~Lee, S.~Kim, S.~Lim, S.~Choi, and S.~Oh.
\newblock Tsallis reinforcement learning: A unified framework for maximum
  entropy reinforcement learning.
\newblock {\em arXiv preprint arXiv:1902.00137}, 2019.

\bibitem{MTFAFBN21}
A.~F. Martins, M.~Treviso, A.~Farinhas, P.~M. Aguiar, M.~A. Figueiredo,
  M.~Blondel, and V.~Niculae.
\newblock Sparse continuous distributions and {F}enchel-{Y}oung losses.
\newblock {\em arXiv preprint arXiv:2108.01988}, 2021.

\bibitem{BMBWGSN20}
V.~Masrani, R.~Brekelmans, T.~Bui, F.~Nielsen, A.~Galstyan, G.~Ver~Steeg, and
  F.~Wood.
\newblock $q$-paths: Generalizing the geometric annealing path using power
  means.
\newblock In {\em Uncertainty in Artificial Intelligence}, pages 1938--1947.
  PMLR, 2021.

\bibitem{M97}
R.~J. McCann.
\newblock A convexity principle for interacting gases.
\newblock {\em Advances in Mathematics}, 128(1):153--179, 1997.

\bibitem{MR21}
P.~A. Morales and F.~E. Rosas.
\newblock A generalization of the maximum entropy principle for curved
  statistical manifolds.
\newblock {\em arXiv preprint arXiv:2105.07953}, 2021.

\bibitem{MNWDR21}
A.~Moreno, S.~Nagesh, Z.~Wu, W.~Dempsey, and J.~M. Rehg.
\newblock Kernel deformed exponential families for sparse continuous attention.
\newblock {\em arXiv preprint arXiv:2111.01222}, 2021.

\bibitem{N02}
J.~Naudts.
\newblock Deformed exponentials and logarithms in generalized thermostatistics.
\newblock {\em Physica A: Statistical Mechanics and its Applications},
  316(1-4):323--334, 2002.

\bibitem{NJ04}
J.~Naudts.
\newblock Estimators, escort probabilities, and $\phi$-exponential families in
  statistical physics.
\newblock {\em Journal of Inequalities in Pure \& Applied Mathematics},
  5(4):102, 2004.

\bibitem{N09}
J.~Naudts.
\newblock The $q$-exponential family in statistical physics.
\newblock {\em Central European Journal of Physics}, 7(3):405--413, 2009.

\bibitem{N11}
J.~Naudts.
\newblock {\em Generalised Thermostatistics}.
\newblock Springer, 2011.

\bibitem{NZ18}
J.~Naudts and J.~Zhang.
\newblock Rho--tau embedding and gauge freedom in information geometry.
\newblock {\em Information Geometry}, 1(1):79--115, 2018.

\bibitem{NCMQW17}
R.~Nock, Z.~Cranko, A.~K. Menon, L.~Qu, and R.~C. Williamson.
\newblock $f$-gans in an information geometric nutshell.
\newblock {\em arXiv preprint arXiv:1707.04385}, 2017.

\bibitem{OW09}
A.~Ohara and T.~Wada.
\newblock Information -{G}aussian densities and behaviors of solutions to
  related diffusion equations.
\newblock {\em Journal of Physics A: Mathematical and Theoretical},
  43(3):035002, 2009.

\bibitem{PW16}
S.~Pal and T.-K.~L. Wong.
\newblock The geometry of relative arbitrage.
\newblock {\em Mathematics and Financial Economics}, 10(3):263--293, 2016.

\bibitem{PW18}
S.~Pal and T.-K.~L. Wong.
\newblock Exponentially concave functions and a new information geometry.
\newblock {\em The Annals of Probability}, 46(2):1070--1113, 2018.

\bibitem{PW18b}
S.~Pal and T.-K.~L. Wong.
\newblock Multiplicative {S}chr\"{o}odinger problem and the {D}irichlet
  transport.
\newblock {\em Probability Theory and Related Fields}, 178(1):613--654, 2020.

\bibitem{PB11}
V.~Pawlowsky-Glahn and A.~Buccianti.
\newblock {\em Compositional Data Analysis: Theory and Applications}.
\newblock John Wiley \& Sons, 2011.

\bibitem{P13}
G.~Pistone.
\newblock Nonparametric information geometry.
\newblock In {\em International Conference on Geometric Science of
  Information}, pages 5--36. Springer, 2013.

\bibitem{R61}
A.~R{\'e}nyi.
\newblock On measures of entropy and information.
\newblock In {\em Proceedings of the Fourth Berkeley Symposium on Mathematical
  Statistics and Probability, Volume 1: Contributions to the Theory of
  Statistics}. The Regents of the University of California, 1961.

\bibitem{R70}
R.~T. Rockafellar.
\newblock {\em Convex Analysis}.
\newblock Princeton University Press, 1970.

\bibitem{RT14}
A.~Rodr{\'\i}guez and C.~Tsallis.
\newblock Connection between {D}irichlet distributions and a scale-invariant
  probabilistic model based on {L}eibniz-like pyramids.
\newblock {\em Journal of Statistical Mechanics: Theory and Experiment},
  2014(12):P12027, 2014.

\bibitem{S15}
F.~Santambrogio.
\newblock {\em Optimal Transport for Applied Mathematicians}.
\newblock Birk{\"a}user, 2015.

\bibitem{SMW20}
A.~M. Scarfone, H.~Matsuzoe, and T.~Wada.
\newblock A study of {R}{\'e}nyi entropy based on the information geometry
  formalism.
\newblock {\em Journal of Physics A: Mathematical and Theoretical},
  53(14):145003, 2020.

\bibitem{T12}
A.~Takatsu.
\newblock Wasserstein geometry of porous medium equation.
\newblock In {\em Annales de l'Institut Henri Poincare (C) Non Linear
  Analysis}, volume~29, pages 217--232. Elsevier, 2012.

\bibitem{TW21}
Z.~Tao and T.-K.~L. Wong.
\newblock Projections with logarithmic divergences.
\newblock In {\em International Conference on Geometric Science of
  Information}, pages 477--486. Springer, 2021.

\bibitem{TC88}
C.~Tsallis.
\newblock Possible generalization of {B}oltzmann-{G}ibbs statistics.
\newblock {\em Journal of Statistical Physics}, 52(1-2):479--487, 1988.

\bibitem{T09}
C.~Tsallis.
\newblock {\em Introduction to Nonextensive Statistical Mechanics: Approaching
  a Complex World}.
\newblock Springer, 2009.

\bibitem{VH14}
T.~Van~Erven and P.~Harremo\"{e}s.
\newblock R{\'e}nyi divergence and {K}ullback-{L}eibler divergence.
\newblock {\em IEEE Transactions on Information Theory}, 60(7):3797--3820,
  2014.

\bibitem{V03}
C.~Villani.
\newblock {\em Topics in Optimal Transportation}.
\newblock American Mathematical Society, 2003.

\bibitem{V08}
C.~Villani.
\newblock {\em Optimal Transport: Old and New}.
\newblock Springer, 2008.

\bibitem{WJ08}
M.~J. Wainwright and M.~I. Jordan.
\newblock {\em Graphical models, exponential families, and variational
  inference}.
\newblock Now Publishers Inc, 2008.

\bibitem{W18}
T.-K.~L. Wong.
\newblock Logarithmic divergences from optimal transport and {R}\'{e}nyi
  geometry.
\newblock {\em Information Geometry}, 1(1):39--78, 2018.

\bibitem{W19}
T.-K.~L. Wong.
\newblock Information geometry in portfolio theory.
\newblock In {\em Geometric Structures of Information}, pages 105--136.
  Springer, 2019.

\bibitem{WY19}
T.-K.~L. Wong and J.~Yang.
\newblock Logarithmic divergences: geometry and interpretation of curvature.
\newblock In {\em International Conference on Geometric Science of
  Information}, pages 413--422. Springer, 2019.

\bibitem{WY19b}
T.-K.~L. Wong and J.~Yang.
\newblock Pseudo-{R}iemannian geometry encodes information geometry in optimal
  transport.
\newblock {\em Information Geometry}, Advance Online Publication, 2021.

\bibitem{ZJ04}
J.~Zhang.
\newblock Divergence function, duality, and convex analysis.
\newblock {\em Neural Computation}, 16(1):159--195, 2004.

\bibitem{ZJ13}
J.~Zhang.
\newblock Nonparametric information geometry: From divergence function to
  referential-representational biduality on statistical manifolds.
\newblock {\em Entropy}, 15(12):5384--5418, 2013.

\bibitem{ZJ15}
J.~Zhang.
\newblock On monotone embedding in information geometry.
\newblock {\em Entropy}, 17(7):4485--4499, 2015.

\end{thebibliography}
\end{document}